\numberwithin{equation}{section}
\newcommand{\Var}{{\rm Var}}
\newcommand{\Cov}{{\rm Cov}}
\newcommand{\R}{\mathbb{R}}
\newcommand{\N}{\mathbb{N}}
\newcommand{\E}{\mathrm {E}}
\newcommand{\B}{\mathbb{B}}
\newcommand{\KS}{{\rm \operatorname {KS}}}
\newcommand{\CvM}{{\rm \operatorname {CvM}}}
\newcommand{\I}{{\rm \operatorname I}}
\newtheorem{thm}{Theorem}
\newtheorem{lem}{Lemma}
\newtheorem{cor}{Corollary}
\newtheorem{rmk}{Remark}
\newcommand{\vast}{\bBigg@{4}}
\newcommand{\Vast}{\bBigg@{5}}
\def\AA{ \mathfrak{A} }
\def\BB{ \mathfrak{B} }
\theoremstyle{definition}
\newcommand\blfootnote[1]{%
  \begingroup
  \renewcommand\thefootnote{}\footnote{#1}%
  \addtocounter{footnote}{-1}%
  \endgroup
}
\begin{document}

\title{Testing hypotheses about mixture distributions using not identically distributed data}

\author{Daniel Gaigall}

\affil{Department of Mathematics, Heinrich-Heine-University  D\"usseldorf, Universit\"atsstr. 1, 40225 D\"usseldorf, Germany.}

\date{}

\maketitle

\blfootnote{Email adress: Daniel.Gaigall@uni-duesseldorf.de}

\begin{abstract}
Testing hypotheses of goodness-of-fit about mixture distributions on the basis of independent but not necessarily identically distributed random vectors is considered. The hypotheses are given by a specific distribution or by a family of distributions. Moreover, testing hypotheses formulated by Hadamard differentiable functionals is discussed in this situation, in particular the hypothesis of  central symmetry, homogeneity and independence. Kolmogorov-Smirnov or Cram\'er-von-Mises type statistics are suggested as well as methods to determine critical values. The focus of the investigation is on asymptotic properties of the test statistics. Further, outcomes of simulations for finite sample sizes are given. Applications to models with not identically distributed errors are presented. The results imply that the tests are of asymptotically exact size and consistent. 

\textit{Keywords:} not identically distributed observations,
mixture distribution,
Kolmogorov-Smirnov statistic,
Cram\'er-von-Mises statistic,
empirical process,
Vapnik-\v{C}hervonenkis class,
Hadamard differentiability

\textit{2000 MSC:} 62G10, 62G09

\end{abstract}

\section{Introduction}\label{einfuhr}
Not identically distributed errors are discussed in various  theoretical and practical contexts, see, e.g., the works of Lu et al. \cite{lu},  Kuljus and Zwanzig \cite{kul}, G\"ornitz et al.  \cite{gor}, Eiker \cite{eic} and Delaigle and Meister \cite{del}. Consider here the following situation as a motivation. Suppose  independent and identically distributed data are underlying, where an independent  but not identically distributed noise is present.  It is assumed that the difference in the distribution of the noise vanishes if the number of observations increases. The interesting statistical problem is a testing problem of goodness-of-fit formulated with the independent and identically distributed original data.

\medskip
More precisely, consider a sequence of independent and identically distributed real valued random variables $Y_1,Y_2,\dots$ with unknown underlying distribution $\mathcal L(Y_1)$. Suppose the user has to treat the testing problem of goodness-of-fit
\begin{equation}\label{testpr0}
\mathrm H:\mathcal L(Y_1)=\mathcal L(Y_0),~\mathrm K:\mathcal L(Y_1)\neq \mathcal L(Y_0),
\end{equation}
where $Y_0$ is a real valued random variable with known distribution $\mathcal L(Y_0)$. Furthermore, let $Z_1,Z_2,\dots$ be another sequence of independent but not necessarily identically distributed real valued random variables with known distributions such that the convergence in distribution
\begin{equation*}
Z_i\overset{\mathrm d}\longrightarrow Z~\text{as}~i\rightarrow\infty
\end{equation*}
holds, with  a real valued random variable $Z$. Assume  $Y_1,Y_2,\dots$ and $Z_1,Z_2,\dots$ are independent. In addition, a sequence of known and measurable maps $e_i:\R\times \R\rightarrow \R$, $i\in\N$, is given and fulfills the convergence
\begin{equation*}
\forall z\in\R:\lim_{i\rightarrow\infty} e_i(\cdot,z)=e(\cdot,z)~\text{uniformly on}~\R,
\end{equation*}
where $e:\R\times\R \rightarrow \R$ is a measurable map. E.g.,
\begin{equation}\label{eg1}
e_i(y,z)=e(y,z)=y+z,~(y,z)\in \R\times \R,~i\in\N.
\end{equation}
Suppose  the user cannot observe directly $Y_1,Y_2,\dots$, but observes
\begin{equation*}
e_{1}(Y_1,Z_1),e_{2}(Y_2,Z_2),\dots,
\end{equation*}
i.e., $e_i(\cdot,Z_i)$ represents a random noise in measurement $i\in\N$. Therefore, the user has to treat testing problem (\ref{testpr0}) on the basis of the independent but not necessarily identically distributed data $X_1,\dots,X_n$,
\begin{equation*}
X_i:=e_i(Y_i,Z_i),~i\in\N,
\end{equation*}
where $n\in\N$ is a given sample size. In example (\ref{eg1}),
\begin{equation*}
X_i=Y_i+Z_i,~i\in\N.
\end{equation*}
Denote by $F_i$ the distribution function of $X_i=e_i(Y_i,Z_i)$, $i\in\N$, and by $F$ the distribution function of $e(Y_1,Z)$. Let $H_i$ be the distribution function of $e_i(Y_0,Z_i)$, $i\in\N$, and let $G$ be the distribution function of $e(Y_0,Z)$. Assume that for all $z\in\R$, the map $e(\cdot,z)$ is injective. Then, testing problem (\ref{testpr0}) is equivalent to
\begin{equation*}
\mathrm H:F=G,~\mathrm K:F\neq G.
\end{equation*}
Moreover, if the hypothesis H is true, $F_i=H_i$, $i\in\N$.

\medskip
In order to get a permutation invariant test statistic, i.e., invariant under transformation of the data from the set
\begin{equation*}
\begin{split}
&\lbrace T_n:\times_{i=1}^n\R\rightarrow \times_{i=1}^n\R;T_n(x_1,\dots,x_n)=(x_{\pi_n(1)},\dots,x_{\pi_n(n)}),\\
&\pi_n:\lbrace 1,\dots,n\rbrace\rightarrow\lbrace 1,\dots,n\rbrace~\text{bijective}\big\rbrace,
\end{split}
\end{equation*}
define the arithmetically averaged distribution function
\begin{equation*}
\mathbb F_n(x):=\frac 1 n\sum_{i=1}^n F_i(x),~x\in\R,
\end{equation*}
based on the distribution functions $F_1,\dots,F_n$ of $X_1,\dots,X_n$ and the arithmetically averaged distribution function
\begin{equation*}
G_n(x):=\frac 1 n \sum_{i=1}^nH_i(x),~x\in\R,
\end{equation*}
based on the distribution functions $H_1,\dots,H_n$, the distribution functions of $X_1,\dots,X_n$ if the hypothesis H is true. Under H, $\mathbb F_n=G_n$. Consider the empirical distribution function based on $X_1,\dots,X_n$,
\begin{equation*}
\hat{\mathbb F}_n(x):=\frac 1 n \sum_{i=1}^n\I(X_i\le x),~x\in\R,
\end{equation*}
and the  Kolmogorov-Smirnov type statistic
\begin{equation}\label{KS0}
\KS_n:=\sqrt n\sup_{x\in\R}|\hat{\mathbb F}_n(x) -G_n(x)|
\end{equation}
as well as the Cram\'er-von-Mises type statistic 
\begin{equation}\label{CvM0}
\CvM_n:=n\int\big(\hat{\mathbb F}_n(x) -  G_n(x)\big)^2   G_n(\mathrm d x).
\end{equation}
Assume $F$ is continuous. In fact, the test statistics (\ref{KS0}) and (\ref{CvM0}) can be used to construct asymptotically exact  and  consistent tests for testing problem (\ref{testpr0}). See the results in Section \ref{simplegood1}. Note that the distribution function $G_n$ depends on the sample size $n$ and that the test statistics (\ref{KS0}) and (\ref{CvM0}) are no common Kolmogorov-Smirnov  and  Cram\'er-von-Mises statistics. The replacement of  $G_n$ with $G$ in the test statistics (\ref{KS0}) and (\ref{CvM0}), i.e., the common Kolmogorov-Smirnov  or  Cram\'er-von-Mises statistic, yields no applicable test for testing problem (\ref{testpr0}). This follows easily from the results in Section \ref{simplegood1}.

\medskip
In Section \ref{simplegood1}, testing goodness-of-fit based on not identically distributed data is discussed. In comparison to the situation above the model is more general  and the hypothesis is formulated in a more general way. Therefore, generalizations of the test statistics (\ref{KS0}) and (\ref{CvM0}) are considered and methods to determine critical values are suggested. The tests are applicable, e.g., in the situation presented above. It is shown that the tests are of asymptotically exact size and consistent. Section \ref{simplegood2} extends the model in Section \ref{simplegood1} to hypotheses of families of distributions. Analogous results are established. Section \ref{had} treats  hypotheses formulated by Hadamard differentiable functionals in the same setting. Similar results as in the goodness-of-fit setting hold for a broad class of tests. Special cases are the hypothesis of homogeneity, central symmetry and independence and applications are, e.g., situations with not identically distributed errors. In general, the presented Kolmogorov-Smirnov  and  Cram\'er-von-Mises type tests based on not identically distributed data and can be regarded as generalizations of the related Kolmogorov-Smirnov  and  Cram\'er-von-Mises type tests in the identically distributed case.

\medskip
The original Kolmogorov-Smirnov  and  Cram\'er-von-Mises statistics are statistics for testing goodness-of-fit,  described by Anderson and Darling \cite{and}. The idea of these statistics can be adopted for testing hypothesis of  homogeneity, symmetry and independence, see, e.g.,  Rosenblatt \cite{ros},  Butler \cite{but} and Blum et al. \cite{blu}, respectively. An important generalization of the mentioned goodness-of-fit statistics for hypotheses of families of distributions is considered by Stute et al. \cite{stu}. Some papers treat other generalizations of Kolmogorov-Smirnov type tests or Cram\'er-von-Mises type tests. Weiss \cite{wei} modified the classical Kolmogorov-Smirnov statistic for the application to correlated data. Chicheportiche et al. \cite{chi} consider the case of dependent and identically distributed observations and the Kolmogorov-Smirnov test as well as the Cram\'er-von-Mises test. An interesting paper is \cite{cha}. The authors Chatterjee and Sen study Kolmogorov-Smirnov type tests for testing the hypothesis of symmetry based on independent but not necessarily identically distributed real valued random variables and investigate the limit distributions of their test statistics with an application of the random walk model. 

\medskip
Further, some works deal with goodness-of-fit tests based on independent but not necessarily identically distributed data. Goodness-of-fit tests in this situation are discussed by Gosh and Basu \cite{gos} in a fully parametric setting. These tests use the density power divergence. Testing for normality in a model with independent but not necessarily identically distributed observations is considered by Sarkadi \cite{sar}. In the discrete case, Collings et al. \cite{col}, H\"usler \cite{hue} and Conover et al. \cite{con} treat a goodness-of-fit test for the Poisson assumption, the binomial test and the chi-square goodness-of-fit test, respectively, in the case of independent but not necessarily identically distributed data.

\section{Testing goodness-of-fit with hypotheses given by a specific distribution}\label{simplegood1}
The following generalization of the model introduced in Section \ref{einfuhr} is considered. Let $X_1,X_2,\dots$ be a sequence of independent but not necessarily identically distributed random vectors  with values in  $\R^m$, $m\in\N$. The random vector $X_i$ has the distribution function $F_i$ defined on $\overline \R^m$, $i\in\N$, with $\overline \R^l:=[-\infty,\infty]^l$, $l\in\N$. Henceforth, regard elements of $\overline \R^l$ as column vectors. Assume the existence of an uniformly continuous distribution function $F$ defined on $\overline \R^m$ with
\begin{equation*}
\lim_{i\rightarrow\infty}F_i=F~\text{uniformly on}~\overline\R^m.
\end{equation*}
For $n\in\N$ the given sample size, let  $G_n$ be another distribution function defined on $\overline\R^m$ as well as $G$, where $G$ is uniformly continuous and 
\begin{equation*}
\lim_{n\rightarrow\infty}G_n=G~\text{uniformly on}~\overline\R^m.
\end{equation*}
In addition, let $\alpha_{1,n},\dots,\alpha_{n,n}$ be some weights, in particular real numbers, such that 
\begin{equation}\label{seq}
\alpha_{i,n}\ge 0,~i=1,\dots,n,~\sum_{i=1}^n\alpha_{i,n}=1,~\lim_{n\rightarrow\infty}\sqrt n\max_{1\le i\le n}\alpha_{i,n}=0,~\lim_{n\rightarrow\infty}n\sum_{i=1}^n\alpha_{i,n}^2=\kappa\in[1,\infty).
\end{equation}
Consider the mixture distribution function
\begin{equation*}
\mathbb F_n(x):=\sum_{i=1}^n \alpha_{i,n}F_i(x),~x\in\overline\R^m,
\end{equation*}
based on the distribution functions $F_1,\dots,F_n$ of $X_1,\dots,X_n$. Suppose  $F_1,\dots,F_n$ are unknown, $ G_n$ and $\alpha_{1,n},\dots,\alpha_{n,n}$ are known and that the user has to verify the hypothesis of goodness-of-fit
\begin{equation}\label{testpr1}
\mathrm H_n:\mathbb F_n=G_n
\end{equation}
on the basis of the observations $X_1,\dots,X_n$.

\medskip
Define the weighted empirical distribution function
\begin{equation}\label{empmix}
\hat{\mathbb F}_n(x):=\sum_{i=1}^n\alpha_{i,n}\I(X_i\le x),~x\in\overline\R^m,
\end{equation}
based on $X_1,\dots,X_n$. For elements $a,b\in\overline\R^m$, $a=(a_1,\dots,a_m)'$, $b=(b_1,\dots,b_m)'$, let the inequality $a\le b$ be equivalent to $a_j\le b_j$ for $j=1,\dots,m$. Further, let $\mathbb U_n:=(U_n(x);x\in\overline\R^m)$ be the process
\begin{equation}\label{proc1}
U_n(x):=\sqrt n \big( \hat {\mathbb F}_n(x)-  G_n(x)\big),~x\in\overline\R^m.
\end{equation}
Consider the  Kolmogorov-Smirnov type statistic
\begin{equation}\label{KS1}
\KS_n:=\sup_{x\in\overline\R^m}|U_n(x)|=\sqrt n\sup_{x\in\overline\R^m}|\hat {\mathbb F}_n(x)-  G_n(x)|
\end{equation}
and the Cram\'er-von-Mises type statistic 
\begin{equation}\label{CvM1}
\CvM_n:=\int U^2_n(x)   G_n(\mathrm d x)=n\int\big(\hat {\mathbb F}_n(x)-  G_n(x)\big)^2   G_n(\mathrm d x).
\end{equation}
In general, the statistics  (\ref{KS1}) and (\ref{CvM1}) are not distribution free, neither in the case of $\mathbb F_n= G_n$. In order to approximate the distribution of the statistics $\KS_n$ and $\CvM_n$ if $\mathbb F_n= G_n$, a Monte-Carlo procedure is suggested. Therefor, simulate independently  observations with joint distribution function
\begin{equation*}
(x_1,\dots,x_n)\longmapsto \prod_{i=1}^n G_n(x_i),~(x_1,\dots,x_n)\in \times_{i=1}^n\overline\R^m.
\end{equation*}
Determine a significance level  $\alpha\in (0,1)$ and denote by $c_{n;1-\alpha}$ the $(1-\alpha)$-quantile of the distribution of $\KS_n$  and  by $d_{n;1-\alpha}$ the $(1-\alpha)$-quantile of the distribution of $\CvM_n$ if $(F_1,\dots,F_n)=( G_n,\dots, G_n)$.  The calculation procedure for practice is described above. Then, testing procedure
\begin{equation}\label{Test1}
\text{``Reject}~\mathrm H_n,~\text{iff}~\KS_n\ge c_{n;1-\alpha}\text{''}~\text{or testing procedure}~\text{``Reject}~\mathrm H_n,~\text{iff}~\CvM_n\ge d_{n;1-\alpha}\text{''} 
\end{equation}
is suggested. If  $\alpha_{i,n}=\frac 1 n$, $i=1,\dots,n$, testing procedure (\ref{Test1}), in particular the statistic (\ref{KS1}) or (\ref{CvM1}), is invariant under transformation of the data from the set
\begin{equation}\label{traf}
\begin{split}
\mathcal T _n:=&\lbrace T_n:\times_{i=1}^n\R^m\rightarrow \times_{i=1}^n\R^m;T_n(x_1,\dots,x_n)=(x_{\pi_n(1)},\dots,x_{\pi_n(n)}),\\
&\pi_n:\lbrace 1,\dots,n\rbrace\rightarrow\lbrace 1,\dots,n\rbrace~\text{bijective}\big\rbrace.
\end{split}
\end{equation}
\begin{rmk}
Assume another sequence of known distribution functions $(H_i)_{i\in\N}$ defined on $\overline \R^m$ is given and  $ G_n=\sum_{i=1}^n \alpha_{i,n}H_i$. 
\begin{itemize}
\item[a)]Consider the model introduced in Section \ref{einfuhr}. Put $\alpha_{i,n}=\frac 1 n$,  $X_i=e_i(Y_i,Z_i)$ and let $H_i$ be the distribution function of $e_i(Y_0,Z_i)$, $i=1,\dots,n$. Then, the test (\ref{Test1}) is applicable to testing problem (\ref{testpr0}). The test is of asymptotically exact size $\alpha$ and  consistent  for testing problem (\ref{testpr0}). This follows from the results in this section. 
\item[b)] The test (\ref{Test1}) is applicable to the hypothesis 
\begin{equation*}
\forall i\in\lbrace 1,\dots,n\rbrace :F_i=H_i.
\end{equation*}
 Putting $\alpha_{i,n}=\frac 1 n$, $i=1,\dots,n$, the test (\ref{Test1}) is also applicable to the hypothesis 
\begin{equation*}
\exists \pi_n:\lbrace 1,\dots,n\rbrace\rightarrow\lbrace 1,\dots,n\rbrace~\text{bijective}~\forall i\in\lbrace 1,\dots,n\rbrace :F_i=H_{\pi_n(i)}.
\end{equation*}
The test is of asymptotically exact size $\alpha$ and  consistent  with respect to suitable alternatives to this hypotheses. See the results in this section. 
\item[c)] Assume $F_i=F$ and $H_i=G$, $i\in\N$. Putting $\alpha_{i,n}=\frac 1 n$, $i=1,\dots,n$, the hypothesis (\ref{testpr1}) is a common hypothesis of goodness-of-fit 
\begin{equation*}
\mathrm H:F=G,
\end{equation*}
the test statistic (\ref{KS1}) or (\ref{CvM1}) is a common Kolmogorov-Smirnov  or  Cram\'er-von-Mises statistic and the test (\ref{Test1}) is a common  Kolmogorov-Smirnov  or  Cram\'er-von-Mises test in a multivariate setting. In this sense, a generalization of the identically distributed case  is considered.
\end{itemize}
\end{rmk}
\begin{rmk}
Note that the distribution function $G_n$ depends on the sample size $n$ and that the test statistics (\ref{KS1}) and (\ref{CvM1}) are no common Kolmogorov-Smirnov  and  Cram\'er-von-Mises statistics, neither in the case of $\alpha_{i,n}=\frac 1 n$, $i=1,\dots,n$. In fact, the replacement of  $G_n$ with $G$ in the test statistics (\ref{KS1}) and (\ref{CvM1}), i.e., the common Kolmogorov-Smirnov  or  Cram\'er-von-Mises statistic, yields no  applicable test for testing problem (\ref{testpr1}). This follows easily from the results in this section.
\end{rmk}

\subsection{Limit results under the hypothesis}

A possible first step to find limit distributions of  statistics of type Kolmogorov-Smirnov and Cram\'er-von-Mises is to show the convergence in distribution of the process in the supremum and integral to a limit element. Let $T$ be a  non-empty set and let $\mathbb W_n:=(W_n(t);t\in T)$ be a stochastic process on a probability space $(\Omega,\AA,P)$ with sample paths in 
\begin{equation*}
 \ell^\infty(T):=\lbrace f:T\rightarrow\R; ||f||_T<\infty\rbrace,
\end{equation*}
where $||f||_T:=\sup_{t\in T}|f(t)|$, $f:T\rightarrow\R$. The map $(f_1,f_2)\mapsto ||f_1-f_2||_T$, $(f_1,f_2)\in\ell^\infty(T)\times \ell^\infty(T)$, defines a metric on $\ell^\infty(T)$. Let $\mathbb W:=(W(t);t\in T)$ be another stochastic process on the probability space  with sample paths in $\ell^\infty(T)$. Then, $\mathbb W_n$ converges in distribution to $\mathbb W$, in notation $\mathbb W_n\overset{\mathrm d}{\rightarrow}\mathbb W$ as $n\rightarrow\infty$, iff $\mathbb W$ is $(\AA,\BB( \ell^\infty(T))$-measurable and
\begin{equation*}
\forall f: \ell^\infty(T)\rightarrow \R~\text{continuous and bounded}:\lim_{n\rightarrow\infty}\E^*\big(f(\mathbb W_n)\big)=\E\big(f(\mathbb W)\big).
\end{equation*}
$\BB( \ell^\infty(T))$ denotes the Borel $\sigma$-field on $( \ell^\infty(T),||\cdot||_T)$. In addition, $\E^*$ is the outer expectation. Details are given by van der Vaart and Wellner \cite{van}.

\medskip
Let $\rho$ be a pseudometric on $T$ such that $(T,\rho)$ is a totally bounded pseudometric space. Then,
\begin{equation*}
U_b(T,\rho):=\lbrace f\in \ell^\infty(T);f~\text{is uniformly $\rho$-continuous}\rbrace
\end{equation*}
is a separable subspace of $\ell^\infty(T)$. If the process $\mathbb W$ has sample paths a.s. in $ U_b(T,\rho)$, the process $\mathbb W$ is $(\AA,\BB( \ell^\infty(T))$-measurable and the distribution of $\mathbb W$ is uniquely determined by its finite dimensional marginal distributions.

\medskip
The asymptotic equicontinuity is crucial for the convergence in distribution of the process $\mathbb W_n$, that is
\begin{equation*}
\forall\varepsilon>0:\lim_{\delta\downarrow 0}\limsup_{n\rightarrow\infty}P^*\bigg(\sup_{s,t\in T,\rho(s,t)\le\delta}|W_n(s)-W_n(t)|>\varepsilon\bigg)=0,
\end{equation*}
where $P^*$ is the outer probability. See \cite{van} for details. Assume the process  $\mathbb W_n$ is asymptotically equicontinuous and all finite dimensional projections of  $\mathbb W_n$ converge in distribution to finite dimensional random vectors. Then, there exists a process $\mathbb W$ with sample paths a.s. in $U_b(T,\rho)$ such that $\mathbb W_n\overset{\mathrm d}{\rightarrow}\mathbb W$ as $n\rightarrow\infty$. Particularly, the process $\mathbb W$ is $(\AA,\BB( \ell^\infty(T))$-measurable and the distribution of $\mathbb W$ is uniquely determined by its finite dimensional marginal distributions, the distributions of those finite dimensional random vectors. See Theorem 1.5.4 in \cite{van}. 

\medskip
Now, a triangular array $\xi_{1,n},\dots,\xi_{n,n}$ of  row-wise independent but not necessarily identically distributed  random vectors  with values in  $\R^m$ on the probability space is given.  Let $K_{i,n}$ be the distribution function of $\xi_{i,n}$ defined on $\overline\R^m$, $i=1,\dots,n$. In the relevant applications, $K_{i,n}$ depends either only on $ i$ or only on $n$, $i=1,\dots,n$. Assume  the existence of an uniformly continuous distribution function $ K$ defined on $\overline\R^m$ such that 
\begin{equation*}
\forall \varepsilon>0~\exists i_\varepsilon\in\N~\exists n_\varepsilon\in\N,~i_\varepsilon\le n_\varepsilon,~\forall i>i_\varepsilon~\forall n>n_\varepsilon,~i\le n:\sup_{x\in \overline\R^m}|K_{i,n}(x)- K(x)|\le \varepsilon.
\end{equation*}
The interesting process is the weighted empirical process  $\mathbb W_n$ given by
\begin{equation*}
W_n(x):=\sqrt n\bigg(\sum_{i=1}^n\alpha_{i,n}\I(\xi_{i,n}\le x)- \sum_{i=1}^n\alpha_{i,n}P(\xi_{i,n}\le x)\bigg),~x\in \overline\R^m.
\end{equation*}
Such types of processes, partly special cases, are studied by Shorack \cite{sho}, Shorack and Wellner \cite{sho2}, Alexander \cite{ale}, Pollard \cite{pol}, Ziegler \cite{zie} and Kosorok \cite{kos1}, \cite{kos2}.  Let $\Phi$ be the distribution function of the $m$-dimensional standard normal distribution. Define a metric
\begin{equation}\label{metr}
\rho(x,y):=\int|\I(w\le x)-\I(w\le y)|\Phi(\mathrm d w),~x,y\in \overline\R^m.
\end{equation}
In fact, $(\overline\R^m,\rho)$ is a totally bounded metric space. With the help of the Vapnik-\v{C}hervonenkis class  theory, it can be shown that the following result holds.
\begin{lem} \label{lem0}
The process $\mathbb W_n$ is asymptotically equicontinuous with respect to the metric space $(\overline\R^m,\rho)$.
\end{lem}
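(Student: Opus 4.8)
The plan is to obtain the asymptotic equicontinuity from the empirical‑process machinery for Vapnik‑\v{C}hervonenkis classes, specialised to the weighted triangular array. The index $x\in\overline\R^m$ enters only through the class of orthant indicators
\[
\mathcal F:=\{\I(\cdot\le x):x\in\overline\R^m\}.
\]
Each orthant $(-\infty,x]=\bigcap_{j=1}^m\{w:w_j\le x_j\}$ is an intersection of $m$ coordinate half‑spaces, each coordinate half‑space family $\{\{w:w_j\le c\}:c\in\overline\R\}$ is a Vapnik‑\v{C}hervonenkis class, and intersections of $m$ such classes remain one; hence $\mathcal F$ is a Vapnik‑\v{C}hervonenkis class of functions with constant envelope $1$. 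Consequently there are finite constants $A$ and $V$ with $\sup_Q N(\eta,\mathcal F,L^2(Q))\le A\eta^{-V}$ for all $\eta\in(0,1)$, the supremum over all probability measures $Q$ on $\overline\R^m$, so the uniform entropy integral $\int_0^1\sqrt{\log(A\eta^{-V})}\,\mathrm d\eta$ is finite. This is the ingredient that drives a chaining bound.

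First I would symmetrise. Writing $g_{s,t}:=\I(\cdot\le s)-\I(\cdot\le t)$ and introducing Rademacher signs $\epsilon_1,\dots,\epsilon_n$ independent of the data, the symmetrisation inequality for sums of independent summands gives
\[
\E^*\!\!\sup_{\rho(s,t)\le\delta}\!|W_n(s)-W_n(t)|\le 2\,\E^*\E_\epsilon\!\!\sup_{\rho(s,t)\le\delta}\Big|\sqrt n\sum_{i=1}^n\alpha_{i,n}\epsilon_i\,g_{s,t}(\xi_{i,n})\Big|.
\]
Conditionally on $\xi_{1,n},\dots,\xi_{n,n}$ the inner process is sub‑Gaussian in the random metric $d_n(s,t)^2:=n\sum_{i=1}^n\alpha_{i,n}^2\,g_{s,t}(\xi_{i,n})^2$, so Dudley's entropy bound applies conditionally. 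After normalising the random total mass $n\sum_{i=1}^n\alpha_{i,n}^2$, which tends to $\kappa$, the polynomial bound on $\mathcal F$ majorises $N(\eta,\cdot,d_n)$ uniformly in the random measure and yields, for a universal constant $L$,
\[
\E_\epsilon\!\!\sup_{\rho(s,t)\le\delta}\Big|\sqrt n\sum_{i=1}^n\alpha_{i,n}\epsilon_i\,g_{s,t}(\xi_{i,n})\Big|\le L\int_0^{\theta_n(\delta)}\!\!\sqrt{\log(A\eta^{-V})}\,\mathrm d\eta,
\]
where $\theta_n(\delta)$ is the $d_n$‑radius of $\{g_{s,t}:\rho(s,t)\le\delta\}$. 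It then remains to show $\theta_n(\delta)$ is small, in probability, for small $\delta$.

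Here the metric $\rho$ and the hypotheses on $K_{i,n}$ and the weights enter. Since $g_{s,t}^2=|\I(\cdot\le s)-\I(\cdot\le t)|$ and $\I(w\le s)\I(w\le t)=\I(w\le s\wedge t)$ with $s\wedge t$ the coordinatewise minimum,
\[
\E\,d_n(s,t)^2=n\sum_{i=1}^n\alpha_{i,n}^2\big(K_{i,n}(s)+K_{i,n}(t)-2K_{i,n}(s\wedge t)\big).
\]
The bracket differs from $\mu_K(s,t):=K(s)+K(t)-2K(s\wedge t)$ by at most $4\sup_x|K_{i,n}(x)-K(x)|$; the assumed uniform convergence $K_{i,n}\to K$, together with $\sqrt n\max_i\alpha_{i,n}\to 0$ to absorb the finitely many small indices, lets me replace every $K_{i,n}$ by $K$ up to a term vanishing with $n$. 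Because $K$ is uniformly continuous on the compact space $\overline\R^m$ whose topology is metrised by $\rho$, the map $(s,t)\mapsto\mu_K(s,t)$ is continuous and vanishes on the diagonal, so there is a modulus $\omega$ with $\omega(\delta)\downarrow 0$ and $\mu_K(s,t)\le\omega(\rho(s,t))$. With $n\sum_i\alpha_{i,n}^2\to\kappa$ this gives $\E\,d_n(s,t)^2\le\kappa\,\omega(\delta)+o(1)$ uniformly over $\rho(s,t)\le\delta$; upgrading this from the diagonal expectation to a bound on $\theta_n(\delta)$ itself needs one further maximal inequality, a uniform law of large numbers for the Vapnik‑\v{C}hervonenkis class $\{g_{s,t}^2\}$.

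Combining the three displays and letting first $n\to\infty$ and then $\delta\downarrow 0$, the integral up to $\theta_n(\delta)$ tends to $0$, so $\lim_{\delta\downarrow0}\limsup_{n\to\infty}\E^*\sup_{\rho(s,t)\le\delta}|W_n(s)-W_n(t)|=0$, and asymptotic equicontinuity follows by Markov's inequality. Equivalently, one may simply verify the hypotheses of a functional central limit theorem for weighted empirical processes over Vapnik‑\v{C}hervonenkis classes as in the cited works of Alexander, Pollard, Ziegler or Kosorok. I expect the main obstacle to be the control of $\theta_n(\delta)$: one must pass from the pointwise variance computation to a bound uniform over all pairs with $\rho(s,t)\le\delta$ and at the same time replace the random metric $d_n$ by the deterministic $\rho$, which forces a second uniform law of large numbers over $\{g_{s,t}^2\}$ and careful bookkeeping of the non‑identical laws $K_{i,n}$ through the uniform‑convergence hypothesis and the weight conditions \eqref{seq}.
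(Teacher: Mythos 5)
Your proposal is correct in outline, but it takes a genuinely different route from the paper. You keep the weights $\alpha_{i,n}$ as coefficients and run the classical programme by hand: symmetrisation with Rademacher signs, conditional sub-Gaussian chaining against the uniform VC entropy bound for orthants, and then control of the random $d_n$-radius $\theta_n(\delta)$ via the variance computation and an extra weighted uniform law of large numbers over $\{g_{s,t}^2\}$. The paper instead absorbs the weights into the observations: it forms the augmented triangular array $\eta_{i,n}=(\xi_{i,n},n\alpha_{i,n})$ with values in $\R^m\times[0,\infty)$, indexes the process by the class $\mathcal V=\{(w,u)\mapsto u\,\I(w\le x);\,x\in\overline\R^m\}$ with envelope $g(w,u)=u$, shows that $\mathcal V$ is a Vapnik-\v{C}hervonenkis graph class (Dudley's half-space result combined with Lemmas 2.6.17 and 2.6.18 of \cite{van}), identifies $\rho$ with the metric $d(f_1,f_2)=\int|f_1-f_2|\,\mathrm d(\Phi\otimes E)$ on $\mathcal V$, and then verifies the hypotheses of Ziegler's functional CLT (Section 4.2 of \cite{zie}): a uniform second-moment bound on the envelope, $\sup_n n\sum_i\alpha_{i,n}^2<\infty$; the variance-modulus condition, which is exactly your $\E\,d_n(s,t)^2$ computation and is handled via Remark \ref{lem-1} and (\ref{seq}); and a Lindeberg condition that reduces to $\I(\sqrt n\max_j\alpha_{j,n}>t)\to 0$. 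The practical difference is that Ziegler's hypotheses are formulated purely in terms of expectations, so the step you correctly single out as the main obstacle — upgrading the diagonal variance bound to a bound on the random radius $\theta_n(\delta)$, i.e.\ your additional weighted Glivenko--Cantelli argument — is packaged inside the cited theorem and never has to be carried out explicitly in the paper. Your route buys a self-contained argument but obliges you to prove that extra maximal inequality (it does hold, since $\{g_{s,t}^2\}$ is again of VC type and the weights $n\alpha_{i,n}^2$ are uniformly negligible by (\ref{seq})); the paper's route buys economy, and it coincides with your own closing remark, since ``verify the hypotheses of a functional CLT as in Ziegler'' is precisely what the paper does — the weight-augmentation trick being the one idea your plan does not contain.

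One small imprecision on your side: $\rho$ does not metrise the usual topology of $\overline\R^m$ (distinct points having a $-\infty$ coordinate can lie at $\rho$-distance zero, so $\rho$ is really only a pseudometric and $(\overline\R^m,\rho)$ is not the compact cube in disguise). However, the modulus statement you actually need, $\sup_{\rho(s,t)\le\delta}\mu_K(s,t)\to 0$ as $\delta\downarrow 0$ for uniformly continuous $K$, is the same one the paper invokes, so this does not damage the argument.
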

Now, the basic results under the hypothesis will be formulated. Therefor, let $\mathbb U=(U(x);x\in\overline\R^m)$ be a Gaussian process with expectation function identically equal to zero, covariance function
\begin{equation}\label{cov1}
c(x,y):=\kappa\Big(  G\big(\min(x,y)\big)-  G(x)  G(y)\Big),~x,y\in\overline\R^m,
\end{equation}
and a.s. uniformly $\rho$-continuous sample paths. For elements $a,b\in\overline\R^m$, $a=(a_1,\dots,a_m)'$, $b=(b_1,\dots,b_m)'$, put $\min(a,b):=(\min(a_1,b_1),\dots,\min(a_m,b_m))'$.
\begin{thm} \label{thm1} Assume ${\mathbb F}_n= G_n$  for $n$ sufficiently large. Then,
\begin{equation*}
\mathbb U_n\overset{\mathrm d}{\longrightarrow} \mathbb U~\text{as}~n\rightarrow\infty.
\end{equation*}
\end{thm}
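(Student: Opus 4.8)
The plan is to verify the two ingredients of the functional central limit theorem recorded after Lemma \ref{lem0}: asymptotic $\rho$-equicontinuity of $\mathbb U_n$ and convergence of its finite dimensional distributions to those of $\mathbb U$; Theorem 1.5.4 in \cite{van} then yields the assertion, since $(\ER^m,\rho)$ is totally bounded. First I would exploit the hypothesis. Because $\mathbb F_n=G_n$ for large $n$ and $P(X_i\le x)=F_i(x)$, the centering in (\ref{proc1}) equals the mean of the empirical part, so that $U_n(x)=\sqrt n\sum_{i=1}^n\alpha_{i,n}\big(\I(X_i\le x)-F_i(x)\big)$ is a sum of row-wise independent, centered summands. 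Moreover the negligibility of the weights gives $\mathbb F_n\to F$ uniformly (the mass $\sum_{i\le i_\varepsilon}\alpha_{i,n}\le i_\varepsilon\max_i\alpha_{i,n}$ of any fixed initial block vanishes), whence $F=\lim\mathbb F_n=\lim G_n=G$; this identification is what lets the covariance below be written in terms of $G$. With $\xi_{i,n}=X_i$, $K_{i,n}=F_i$ and $K=F$, the process $\mathbb U_n$ is exactly the process $\mathbb W_n$ of the preceding discussion, so Lemma \ref{lem0} already supplies the asymptotic equicontinuity.

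It remains to treat the finite dimensional distributions. Fixing $x_1,\dots,x_k\in\ER^m$ and $\lambda_1,\dots,\lambda_k\in\R$, I would apply the Cram\'er-Wold device to $\sum_{j=1}^k\lambda_jU_n(x_j)=\sum_{i=1}^n\eta_{i,n}$ with $\eta_{i,n}:=\sqrt n\,\alpha_{i,n}\sum_{j=1}^k\lambda_j\big(\I(X_i\le x_j)-F_i(x_j)\big)$, a triangular array that is row-wise independent and centered, and invoke a Lindeberg-Feller central limit theorem. The variances aggregate to $\sum_{i=1}^n\E\eta_{i,n}^2=n\sum_{i=1}^n\alpha_{i,n}^2\,\Var\big(\sum_j\lambda_j\I(X_i\le x_j)\big)$, and the cross terms yield $n\sum_{i=1}^n\alpha_{i,n}^2\big(F_i(\min(x_j,x_l))-F_i(x_j)F_i(x_l)\big)$ for the $(j,l)$ covariance.

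The limit of these sums is the heart of the matter. Writing $g_i:=F_i(\min(x_j,x_l))-F_i(x_j)F_i(x_l)$ and $g:=G(\min(x_j,x_l))-G(x_j)G(x_l)$, one has $g_i\to g$ by $F_i\to F=G$ uniformly, while the weights $n\alpha_{i,n}^2$ are nonnegative with $\sum_i n\alpha_{i,n}^2\to\kappa$; a Toeplitz argument—splitting off a fixed initial block of total weight $n\sum_{i\le i_\varepsilon}\alpha_{i,n}^2\le i_\varepsilon(\sqrt n\max_i\alpha_{i,n})^2\to0$—gives $n\sum_i\alpha_{i,n}^2 g_i\to\kappa g$, i.e. the $(j,l)$ covariance tends to $c(x_j,x_l)$ in (\ref{cov1}). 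The Lindeberg condition is immediate from uniform asymptotic negligibility: $\max_i|\eta_{i,n}|\le 2\big(\sum_j|\lambda_j|\big)\sqrt n\max_i\alpha_{i,n}\to0$, so the truncated second moments vanish for large $n$. Hence $\big(U_n(x_1),\dots,U_n(x_k)\big)$ converges to the centered normal law with covariances $c(x_j,x_l)$, matching the finite dimensional distributions of $\mathbb U$. Combining this with Lemma \ref{lem0} through Theorem 1.5.4 in \cite{van} completes the proof. The main obstacle is the covariance limit: one must simultaneously use $F_i\to F$, the identification $F=G$, and the weight conditions (\ref{seq}), rather than any single one of them.
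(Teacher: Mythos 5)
Your proposal is correct and follows essentially the same route as the paper's proof: under the hypothesis the process is centered, so Lemma \ref{lem0} yields asymptotic equicontinuity; the finite dimensional distributions are treated via the Cram\'er-Wold device and Lindeberg's condition (which holds trivially because $\sqrt n\max_{1\le i\le n}\alpha_{i,n}\to 0$ bounds the summands deterministically); and Theorem 1.5.4 in \cite{van} concludes. Your inline Toeplitz argument for the covariance limit and for the identification $F=G$ is precisely what the paper packages as Remark \ref{lem-1} and Remark \ref{bem1}.
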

\begin{cor}\label{cor1}
Assume ${\mathbb F}_n= G_n$  for $n$ sufficiently large. It is
\begin{equation*}
\KS_n\overset{\mathrm d}{\longrightarrow}\sup_{x\in\overline\R^m}|U(x)|~\text{and}~\CvM_n\overset{\mathrm d}{\longrightarrow}\int U^2(x)   G(\mathrm d x)~\text{as}~n\rightarrow\infty.
\end{equation*}
\end{cor}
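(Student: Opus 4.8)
The plan is to obtain both limits from Theorem~\ref{thm1}, that is, from the weak convergence $\mathbb U_n\overset{\mathrm d}{\longrightarrow}\mathbb U$ in $\ell^\infty(\ER^m)$, by continuous-mapping arguments. The two functionals must be treated separately, because the Cram\'er--von-Mises integrator $G_n$ still depends on $n$, so a single plain continuous mapping theorem will not cover both cases.

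For $\KS_n$ the argument is immediate. Writing $h:\ell^\infty(\ER^m)\rightarrow\R$, $h(f):=\sup_{x\in\ER^m}|f(x)|$, one has $\KS_n=h(\mathbb U_n)$, and $h$ is $1$-Lipschitz for the sup-norm metric, hence continuous everywhere. Since the limit $\mathbb U$ has sample paths a.s.\ in the separable space $U_b(\ER^m,\rho)$ and is measurable, the continuous mapping theorem (Theorem~1.3.6 in \cite{van}) gives $\KS_n=h(\mathbb U_n)\overset{\mathrm d}{\longrightarrow}h(\mathbb U)=\sup_{x\in\ER^m}|U(x)|$.

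For $\CvM_n$ I would use the extended continuous mapping theorem (Theorem~1.11.1 in \cite{van}) to accommodate the $n$-dependent integrator. Define $g_n,g:\ell^\infty(\ER^m)\rightarrow\R$ by $g_n(f):=\int f^2\,\mathrm dG_n$ and $g(f):=\int f^2\,\mathrm dG$, so that $\CvM_n=g_n(\mathbb U_n)$, and take $U_b(\ER^m,\rho)$ as the limiting domain on which $\mathbb U$ concentrates. It then suffices to check that $g_n(f_n)\rightarrow g(f)$ whenever $\norm{f_n-f}_{\ER^m}\rightarrow 0$ with $f\in U_b(\ER^m,\rho)$. Splitting
$$\abs{g_n(f_n)-g(f)}\le \int\abs{f_n^2-f^2}\,\mathrm dG_n+\bigg|\int f^2\,\mathrm dG_n-\int f^2\,\mathrm dG\bigg|,$$
the first term is bounded by $\norm{f_n-f}_{\ER^m}\big(\norm{f_n}_{\ER^m}+\norm{f}_{\ER^m}\big)$, as $G_n$ is a probability measure, and hence tends to $0$.

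The essential step is the second term. Because $G$ is uniformly continuous and $G_n\rightarrow G$ uniformly, one has $G_n(x)\rightarrow G(x)$ for every $x$, and as every point is a continuity point of $G$ this amounts to weak convergence of the associated probability measures on the compact space $\ER^m$. Moreover, since $\rho(x_k,x)=\int|\I(w\le x_k)-\I(w\le x)|\Phi(\mathrm dw)\rightarrow 0$ whenever $x_k\rightarrow x$ in the usual topology of $\ER^m$ (the symmetric difference of the two lower orthants having vanishing $\Phi$-measure, as $\Phi$ has no atoms on the relevant hyperplanes), the identity map from $\ER^m$ with its usual topology into $(\ER^m,\rho)$ is continuous, so every uniformly $\rho$-continuous $f$, and hence $f^2$, is continuous on the compact space $\ER^m$. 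The Portmanteau theorem therefore yields $\int f^2\,\mathrm dG_n\rightarrow\int f^2\,\mathrm dG$, so the second term vanishes and $g_n(f_n)\rightarrow g(f)$. The extended continuous mapping theorem then gives $\CvM_n=g_n(\mathbb U_n)\overset{\mathrm d}{\longrightarrow}g(\mathbb U)=\int U^2(x)\,G(\mathrm dx)$. The main obstacle is precisely this second term: the $n$-dependent integrator forces the extended rather than the ordinary continuous mapping theorem, and one must first convert the a.s.\ $\rho$-continuity of the limiting paths into ordinary continuity before the weak convergence $G_n\Rightarrow G$ — itself read off from uniform convergence to the continuous limit — can be exploited via Portmanteau.
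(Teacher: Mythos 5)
Your proposal is correct and follows essentially the same route as the paper: the paper's own proof of Corollary \ref{cor1} simply invokes Theorem \ref{thm1} together with the continuous mapping theorem (Theorem 1.3.6 in \cite{van}) and defers the remaining details---precisely the treatment of the $n$-dependent integrator $G_n$ in the Cram\'er--von-Mises statistic---to the proof of Theorem 3 in \cite{bar}. Your argument (plain continuous mapping for the supremum; extended continuous mapping with the splitting $\int\lvert f_n^2-f^2\rvert\,\mathrm dG_n$ plus $\lvert\int f^2\,\mathrm dG_n-\int f^2\,\mathrm dG\rvert$, the latter handled by converting a.s.\ $\rho$-continuity of the limit paths into ordinary continuity on the compact space $\overline\R^m$ and using weak convergence of $G_n$ to $G$) is exactly the standard way those deferred details are filled in, so there is no substantive difference in approach.
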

In order to show that testing procedure (\ref{Test1}) works asymptotically, it is necessary to study the asymptotic distribution of the statistics  (\ref{KS1}) and (\ref{CvM1}) if $(F_1,\dots,F_n)=( G_n,\dots, G_n)$, too. Well, let $X_1^{(n)},X_2^{(n)},\dots$ be a sequence of independent and identically distributed random vectors with underlying distribution function $ G_n$ and let $\mathbb U_n^{(n)}=(U_n^{(n)}(x);x\in\overline\R^m)$ be the process (\ref{proc1}) based on the random vectors $X_1^{(n)},\dots,X_n^{(n)}$. Denote by $\KS_n^{(n)}$ and $\CvM_n^{(n)}$ the statistics (\ref{KS1}) and (\ref{CvM1}) based on the random vectors $X_1^{(n)},\dots,X_n^{(n)}$, respectively. 
\begin{thm} \label{thm2} It is
\begin{equation*}
\mathbb U_n^{(n)}\overset{\mathrm d}{\longrightarrow} \mathbb U~\text{as}~n\rightarrow\infty.
\end{equation*}
\end{thm}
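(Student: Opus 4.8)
\section*{Proof proposal for Theorem \ref{thm2}}

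The plan is to recognise $\mathbb U_n^{(n)}$ as an instance of the general weighted empirical process $\mathbb W_n$ introduced above, so that Lemma \ref{lem0} applies verbatim, and then to supply the one missing ingredient, namely finite-dimensional convergence. This mirrors the argument behind Theorem \ref{thm1}, the only difference being that here every member of a row has the \emph{same} distribution $G_n$ rather than an individual $F_i$, which makes the covariance bookkeeping more direct. Concretely, take the triangular array $\xi_{i,n}:=X_i^{(n)}$; each row consists of i.i.d.\ random vectors with common distribution function $K_{i,n}=G_n$, depending only on $n$, and since $G_n\to G$ uniformly with $G$ uniformly continuous, the limiting distribution function is $K=G$ and the standing assumption on the array is satisfied. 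Because $P(\xi_{i,n}\le x)=G_n(x)$, one has $W_n(x)=U_n^{(n)}(x)$, so Lemma \ref{lem0} shows immediately that $\mathbb U_n^{(n)}$ is asymptotically equicontinuous with respect to $(\overline\R^m,\rho)$.

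It remains to prove that the finite-dimensional projections of $\mathbb U_n^{(n)}$ converge to those of $\mathbb U$. Fixing $x_1,\dots,x_k\in\overline\R^m$, I would apply the Cram\'er--Wold device: for real $\lambda_1,\dots,\lambda_k$ write
\begin{equation*}
\sum_{j=1}^k\lambda_j U_n^{(n)}(x_j)=\sum_{i=1}^n\zeta_{i,n},\qquad \zeta_{i,n}:=\sqrt n\,\alpha_{i,n}\sum_{j=1}^k\lambda_j\big(\I(X_i^{(n)}\le x_j)-G_n(x_j)\big),
\end{equation*}
a row-wise independent, centred triangular array. Using $P(X_i^{(n)}\le x,X_i^{(n)}\le y)=G_n(\min(x,y))$ one obtains
\begin{equation*}
\Cov\big(U_n^{(n)}(x),U_n^{(n)}(y)\big)=n\sum_{i=1}^n\alpha_{i,n}^2\big(G_n(\min(x,y))-G_n(x)G_n(y)\big),
\end{equation*}
which, by $n\sum_{i=1}^n\alpha_{i,n}^2\to\kappa$ from (\ref{seq}) together with $G_n\to G$ uniformly, converges to the covariance $c(x,y)$ of (\ref{cov1}); hence $\sum_{i=1}^n\Var(\zeta_{i,n})$ converges to the quadratic form $\sum_{j,l}\lambda_j\lambda_l\,c(x_j,x_l)$.

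The place where I expect the weight assumption (\ref{seq}) to do the essential work, although the verification is short, is the Lindeberg condition needed for the Lindeberg--Feller theorem. Since $\big|\sum_{j}\lambda_j(\I(X_i^{(n)}\le x_j)-G_n(x_j))\big|\le C:=\sum_{j=1}^k|\lambda_j|$ deterministically, one has $\max_{1\le i\le n}|\zeta_{i,n}|\le C\sqrt n\max_{1\le i\le n}\alpha_{i,n}$, and $\sqrt n\max_{1\le i\le n}\alpha_{i,n}\to 0$ by (\ref{seq}) forces every truncated second-moment sum $\sum_{i}\E(\zeta_{i,n}^2\,\I(|\zeta_{i,n}|>\varepsilon))$ to vanish for all large $n$. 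Thus each linear combination is asymptotically normal, Cram\'er--Wold yields convergence of the finite-dimensional distributions to the centred Gaussian law with covariance $c$, and combining this with the asymptotic equicontinuity from Lemma \ref{lem0} via Theorem 1.5.4 in \cite{van} produces a limit process with a.s.\ uniformly $\rho$-continuous paths whose law is pinned down by these marginals. That process is $\mathbb U$, giving $\mathbb U_n^{(n)}\overset{\mathrm d}{\longrightarrow}\mathbb U$ as $n\to\infty$.
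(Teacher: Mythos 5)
Your proposal is correct and follows essentially the same route as the paper: the paper's proof of Theorem \ref{thm2} simply observes that the argument for Theorem \ref{thm1} (asymptotic equicontinuity via Lemma \ref{lem0}, Lindeberg plus Cram\'er--Wold for the finite-dimensional laws, then Theorem 1.5.4 of \cite{van}) goes through for the row-wise i.i.d.\ array $X_1^{(n)},\dots,X_n^{(n)}$, using $\lim_{n\rightarrow\infty}G_n=G$ uniformly to identify the limiting covariance $c(x,y)$. Your explicit verification—taking $K_{i,n}=G_n$, $K=G$ in the standing assumption on the array, and killing the Lindeberg sum via $\sqrt n\max_{1\le i\le n}\alpha_{i,n}\rightarrow 0$—is exactly the computation the paper leaves implicit.
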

\begin{cor}\label{cor2}
It follows that
\begin{equation*}
\KS_n^{(n)}\overset{\mathrm d}{\longrightarrow}\sup_{x\in\overline\R^m}|U(x)|~\text{and}~\CvM_n^{(n)}\overset{\mathrm d}{\longrightarrow}\int U^2(x)  G(\mathrm d x)~\text{as}~n\rightarrow\infty.
\end{equation*}
\end{cor}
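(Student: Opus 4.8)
The plan is to derive both limits from Theorem \ref{thm2}, which provides $\mathbb U_n^{(n)}\overset{\mathrm d}{\longrightarrow}\mathbb U$ in $\ell^\infty(\overline\R^m)$, by continuous-mapping arguments. Since the Kolmogorov-Smirnov part involves a fixed functional whereas the Cram\'er-von-Mises part involves an integrating measure $G_n$ that varies with $n$, I would treat the two statements separately.

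For the Kolmogorov-Smirnov statistic, observe that $\KS_n^{(n)}=||\mathbb U_n^{(n)}||_{\overline\R^m}$ and that the supremum functional $g\mapsto ||g||_{\overline\R^m}=\sup_{x\in\overline\R^m}|g(x)|$ is $1$-Lipschitz on $\ell^\infty(\overline\R^m)$, since $\big|\,||g_1||_{\overline\R^m}-||g_2||_{\overline\R^m}\,\big|\le ||g_1-g_2||_{\overline\R^m}$, hence continuous for the metric on $\ell^\infty(\overline\R^m)$. The continuous mapping theorem (see \cite{van}) applied to Theorem \ref{thm2} then yields $\KS_n^{(n)}\overset{\mathrm d}{\longrightarrow}||\mathbb U||_{\overline\R^m}=\sup_{x\in\overline\R^m}|U(x)|$.

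For the Cram\'er-von-Mises statistic, $\CvM_n^{(n)}=\int (U_n^{(n)}(x))^2\,G_n(\mathrm d x)$ integrates against $G_n$, while the claimed limit $\int U^2(x)\,G(\mathrm d x)$ integrates against $G$. I would therefore invoke the extended continuous mapping theorem, Theorem 1.11.1 in \cite{van}, with the maps $g\mapsto\int g^2\,G_n(\mathrm d x)$ and $g\mapsto\int g^2\,G(\mathrm d x)$ from $\ell^\infty(\overline\R^m)$ to $\R$. The hypothesis to verify is the deterministic convergence: whenever $g_n\to g$ in $||\cdot||_{\overline\R^m}$ with $g\in U_b(\overline\R^m,\rho)$, then $\int g_n^2\,G_n(\mathrm d x)\to\int g^2\,G(\mathrm d x)$. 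Since $\mathbb U$ has sample paths a.s. in $U_b(\overline\R^m,\rho)$, the limit process lives in the required subset and the theorem applies.

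To establish this deterministic convergence I would split
\begin{equation*}
\Big|\int g_n^2\,G_n(\mathrm d x)-\int g^2\,G(\mathrm d x)\Big|\le\int\big|g_n^2-g^2\big|\,G_n(\mathrm d x)+\Big|\int g^2\,G_n(\mathrm d x)-\int g^2\,G(\mathrm d x)\Big|.
\end{equation*}
The first term is bounded by $||g_n-g||_{\overline\R^m}\big(||g_n||_{\overline\R^m}+||g||_{\overline\R^m}\big)$ because $G_n$ is a probability measure, and tends to zero. For the second term, uniform convergence $G_n\to G$ of the distribution functions entails weak convergence of the associated probability measures, and $g$ is bounded. The crucial point---and the main obstacle---is to show that $g$ is continuous for the usual topology on $\overline\R^m$, so that $g^2$ is a bounded continuous function and the portmanteau theorem gives convergence of the integrals. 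This follows since, by continuity of $\Phi$, the $\Phi$-measure of the symmetric difference of $\lbrace w\le x_k\rbrace$ and $\lbrace w\le x\rbrace$ vanishes as $x_k\to x$, so that ordinary convergence in $\overline\R^m$ implies $\rho$-convergence; hence any $\rho$-continuous $g$ is continuous in the usual sense. Combining the two terms gives the deterministic convergence, and the extended continuous mapping theorem then delivers $\CvM_n^{(n)}\overset{\mathrm d}{\longrightarrow}\int U^2(x)\,G(\mathrm d x)$.
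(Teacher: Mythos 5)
Your proof is correct and takes essentially the same route as the paper: the paper also deduces the corollary from Theorem \ref{thm2} by continuous-mapping arguments (citing Theorem 1.3.6 in \cite{van}), deferring the details for the $n$-dependent integrating measure in the Cram\'er-von-Mises part to the proof of Theorem 3 in \cite{bar}. Your explicit treatment via the extended continuous mapping theorem---including the splitting of the integral and the observation that $\rho$-continuity implies ordinary continuity on $\overline\R^m$, so that the weak convergence $G_n\Rightarrow G$ applies---supplies exactly the details the paper outsources.
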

The following result implies that the test (\ref{Test1}) is a test of asymptotically exact size $\alpha$.
\begin{cor}\label{cor3}
Suppose ${\mathbb F}_n= G_n$  for $n$ sufficiently large. Then,
\begin{equation*} 
\lim_{n\rightarrow\infty}P(\KS_n\ge c_{n;1-\alpha})=\alpha~\text{and}~\lim_{n\rightarrow\infty}P(\CvM_n\ge d_{n;1-\alpha})=\alpha.
\end{equation*}
\end{cor}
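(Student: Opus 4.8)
The plan is to deduce the statement entirely from the two distributional convergences already established: Corollary \ref{cor1}, which under the hypothesis gives $\KS_n\overset{\mathrm d}{\longrightarrow}\sup_{x\in\overline\R^m}|U(x)|$ and $\CvM_n\overset{\mathrm d}{\longrightarrow}\int U^2(x) G(\mathrm d x)$, and Corollary \ref{cor2}, which gives the same two limits for the Monte-Carlo statistics $\KS_n^{(n)}$ and $\CvM_n^{(n)}$ whose $(1-\alpha)$-quantiles define the critical values $c_{n;1-\alpha}$ and $d_{n;1-\alpha}$. Write $S:=\sup_{x\in\overline\R^m}|U(x)|$ and $C:=\int U^2(x) G(\mathrm d x)$ for the common limit variables and let $J_S$, $J_C$ denote their distribution functions, with $(1-\alpha)$-quantiles $c_{1-\alpha}$, $d_{1-\alpha}$. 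The scheme has three steps: (i) show $J_S$ and $J_C$ are continuous and strictly increasing at $c_{1-\alpha}$ and $d_{1-\alpha}$; (ii) combine this with Corollary \ref{cor2} to obtain convergence of the critical values, $c_{n;1-\alpha}\to c_{1-\alpha}$ and $d_{n;1-\alpha}\to d_{1-\alpha}$; (iii) combine the convergence of the thresholds with Corollary \ref{cor1} to pass to the limit in the rejection probabilities.

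For step (i), since $\mathbb U$ is a non-degenerate centered Gaussian process with a.s.\ uniformly $\rho$-continuous, hence bounded, sample paths, the law of $S=\norm{\mathbb U}_{\overline\R^m}$ has no atoms on $(0,\infty)$ and is strictly increasing on the interior of its support; this is the standard continuity property of the distribution of the supremum of a Gaussian process, which I would invoke by citation. For the Cram\'er-von-Mises limit, a Karhunen-Lo\`eve expansion of $\mathbb U$ with respect to the covariance kernel (\ref{cov1}) represents $C$ as $\sum_k\lambda_k\zeta_k^2$ with i.i.d.\ standard normal $\zeta_k$ and strictly positive eigenvalues $\lambda_k$, so $J_C$ is continuous and strictly increasing on $(0,\infty)$. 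Because $\alpha\in(0,1)$, both quantiles $c_{1-\alpha}$ and $d_{1-\alpha}$ are strictly positive, hence continuity points of the respective limiting distribution functions.

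Step (ii) is then routine: by Corollary \ref{cor2} the distribution functions of $\KS_n^{(n)}$ and $\CvM_n^{(n)}$ converge to $J_S$ and $J_C$ at every continuity point, and weak convergence together with continuity and strict monotonicity of the limit at its $(1-\alpha)$-quantile forces the associated sample quantiles to converge, giving $c_{n;1-\alpha}\to c_{1-\alpha}$ and $d_{n;1-\alpha}\to d_{1-\alpha}$. For step (iii), under the hypothesis Corollary \ref{cor1} gives $\KS_n\overset{\mathrm d}{\longrightarrow}S$; since the thresholds converge deterministically, a Slutsky-type argument yields $\KS_n-c_{n;1-\alpha}\overset{\mathrm d}{\longrightarrow}S-c_{1-\alpha}$, and using continuity of $J_S$ at $c_{1-\alpha}$ one concludes
\begin{equation*}
\lim_{n\rightarrow\infty}P(\KS_n\ge c_{n;1-\alpha})=P(S\ge c_{1-\alpha})=1-J_S(c_{1-\alpha})=\alpha .
\end{equation*}
The identical argument with $\CvM_n$, $d_{n;1-\alpha}$, $C$ and $J_C$ gives the second limit.

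The main obstacle is step (i): once the limit distribution functions are known to be continuous (indeed strictly increasing) at the relevant quantiles, steps (ii) and (iii) are standard consequences of the weak limits in Corollaries \ref{cor1} and \ref{cor2}. Thus the crux is the regularity of the distribution of the Gaussian supremum $S$ at $c_{1-\alpha}$, which is precisely what makes the Monte-Carlo quantiles well behaved and the limiting rejection probability equal to exactly $\alpha$; it is the only ingredient not already delivered by the earlier results.
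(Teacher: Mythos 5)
Your proposal is correct and follows essentially the same route as the paper: establish continuity and strict monotonicity of the limit distribution functions, deduce convergence of the critical values $c_{n;1-\alpha}$ and $d_{n;1-\alpha}$ from Corollary \ref{cor2}, and then conclude from Corollary \ref{cor1}. The only difference is in how the regularity of the limit laws is justified: you sketch direct arguments (Gaussian supremum theory and a Karhunen-Lo\`eve expansion), whereas the paper deduces it from the non-vanishing of the variance function, which follows from the uniform continuity of $G$, and refers to Remark 1 and the proof of Corollary 1 in \cite{bar} for the details.
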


\subsection{Limit results under alternatives}
Now, a general result gives the limit behavior  in probability of the test statistics   (\ref{KS1}) and (\ref{CvM1}). The result holds under the hypotheses as well as under alternatives.
\begin{thm} \label{thm3}It is
\begin{equation*}
\forall\varepsilon >0:P\bigg(\frac{1}{\sqrt n}\KS_n\ge\sup_{x\in\overline\R^m}|{ F}(x)- G(x)|-\varepsilon+o_p(1)\bigg)\longrightarrow 1~\text{as}~n\rightarrow\infty,
\end{equation*}
and
\begin{equation*}
\forall\varepsilon >0:P\bigg(\frac 1 n \CvM_n\ge \int\big({ F}(x)- G(x)\big)^2   G(\mathrm d x)-\varepsilon+o_p(1) \bigg)\longrightarrow 1~\text{as}~n\rightarrow\infty.
\end{equation*}
\end{thm}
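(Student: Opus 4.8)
The plan is to reduce both assertions to one decomposition, $\hat{\mathbb F}_n-G_n=(\hat{\mathbb F}_n-\mathbb F_n)+(\mathbb F_n-G_n)$, in which the first summand is a vanishing stochastic fluctuation and the second a deterministic drift converging to $F-G$. First I would record the two facts on which everything rests. On the deterministic side, since $\sum_{i=1}^n\alpha_{i,n}=1$ and $\max_{1\le i\le n}\alpha_{i,n}\to 0$ (a consequence of $\sqrt n\max_{1\le i\le n}\alpha_{i,n}\to 0$), the uniform convergence $F_i\to F$ transfers to the mixture: splitting the weighted sum at a fixed index $N$ bounds the tail by $\sup_{i>N}\sup_x|F_i(x)-F(x)|$ and the head by $N\max_i\alpha_{i,n}\to 0$, so $\mathbb F_n\to F$ uniformly on $\overline\R^m$; together with the assumed $G_n\to G$ this gives $\mathbb F_n-G_n\to F-G$ uniformly. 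On the stochastic side, the centered weighted empirical process $W_n=\sqrt n(\hat{\mathbb F}_n-\mathbb F_n)$ is precisely the process $\mathbb W_n$ of the previous subsection with $\xi_{i,n}=X_i$ (so $K_{i,n}=F_i$, $K=F$). By Lemma \ref{lem0} it is asymptotically $\rho$-equicontinuous, and because $\var(W_n(x))=n\sum_{i=1}^n\alpha_{i,n}^2F_i(x)(1-F_i(x))\le\frac14 n\sum_{i=1}^n\alpha_{i,n}^2\to\kappa/4$ is bounded at each fixed $x$, the usual covering argument over the totally bounded space $(\overline\R^m,\rho)$ yields $\sup_{x\in\overline\R^m}|W_n(x)|=O_p(1)$. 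Hence $R_n:=\sup_{x\in\overline\R^m}|\hat{\mathbb F}_n(x)-\mathbb F_n(x)|=O_p(n^{-1/2})=o_p(1)$.

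For the Kolmogorov--Smirnov part I would apply the reverse triangle inequality pointwise: for each $x$, $|\hat{\mathbb F}_n(x)-G_n(x)|\ge|\mathbb F_n(x)-G_n(x)|-R_n$, and taking the supremum gives $\frac1{\sqrt n}\KS_n=\sup_{x\in\overline\R^m}|\hat{\mathbb F}_n(x)-G_n(x)|\ge\sup_{x\in\overline\R^m}|\mathbb F_n(x)-G_n(x)|-R_n$. Since the uniform convergence above forces $\sup_{x\in\overline\R^m}|\mathbb F_n(x)-G_n(x)|\to\sup_{x\in\overline\R^m}|F(x)-G(x)|$, for $n$ large the deterministic term exceeds $\sup_{x\in\overline\R^m}|F(x)-G(x)|-\varepsilon$, and absorbing $-R_n=o_p(1)$ into the stated $o_p(1)$ completes this case with probability tending to one.

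For the Cram\'er--von--Mises part I would expand $(\hat{\mathbb F}_n-G_n)^2=(\mathbb F_n-G_n)^2+2(\mathbb F_n-G_n)(\hat{\mathbb F}_n-\mathbb F_n)+(\hat{\mathbb F}_n-\mathbb F_n)^2$ and discard the final nonnegative term, so that after integrating against $G_n$, $\frac1n\CvM_n\ge\int(\mathbb F_n-G_n)^2\,G_n(\mathrm dx)+2\int(\mathbb F_n-G_n)(\hat{\mathbb F}_n-\mathbb F_n)\,G_n(\mathrm dx)$. The cross term is bounded in modulus by $2R_n\sup_{x\in\overline\R^m}|\mathbb F_n(x)-G_n(x)|=O(1)\cdot o_p(1)=o_p(1)$, using $\int G_n(\mathrm dx)=1$. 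For the leading term I would invoke that $G_n\to G$ weakly (uniform convergence of distribution functions with continuous limit $G$) while the integrand $(\mathbb F_n-G_n)^2$ converges uniformly to the bounded continuous function $(F-G)^2$; the elementary estimate $|\int g_n\,G_n(\mathrm dx)-\int g\,G(\mathrm dx)|\le\|g_n-g\|+|\int g\,G_n(\mathrm dx)-\int g\,G(\mathrm dx)|$ then yields $\int(\mathbb F_n-G_n)^2\,G_n(\mathrm dx)\to\int(F-G)^2\,G(\mathrm dx)$. Subtracting $\varepsilon$ and collecting the remainders as $o_p(1)$ gives the claim.

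The only genuinely non-routine ingredient is the uniform stochastic boundedness $\sup_{x\in\overline\R^m}|W_n(x)|=O_p(1)$: this is where Lemma \ref{lem0} (the Vapnik--\v{C}hervonenkis/equicontinuity input) does the work, and the care lies in combining asymptotic equicontinuity with the pointwise variance bound over the totally bounded space $(\overline\R^m,\rho)$. For the Cram\'er--von--Mises statement the remaining delicate point is interchanging the limit with integration against the moving measure $G_n$; justifying the uniform convergence of the integrand and the weak convergence $G_n\to G$ simultaneously is the step I would write out most carefully.
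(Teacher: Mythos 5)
Your proposal is correct and follows essentially the same route as the paper's proof: both rest on the decomposition $\hat{\mathbb F}_n-G_n=(\hat{\mathbb F}_n-\mathbb F_n)+(\mathbb F}_n-G_n)$, with the empirical-process input (Lemma \ref{lem0} and the weight conditions (\ref{seq})) making the stochastic term $\sup_{x\in\overline\R^m}|\hat{\mathbb F}_n(x)-\mathbb F_n(x)|$ vanish in probability, the deterministic mixture convergence of Remark \ref{bem1} handling the drift $\mathbb F_n-G_n\to F-G$, and weak convergence of $G_n$ to the continuous $G$ handling the moving integrating measure. The only differences are technical bookkeeping: the paper obtains $\sup_{x\in\overline\R^m}|\hat{\mathbb F}_n(x)-\mathbb F_n(x)|=o_p(1)$ from Corollary \ref{cor1} applied to the centered process together with Slutsky's theorem, rather than your equicontinuity-plus-Chebyshev covering argument, and for the Cram\'er--von Mises part it uses a four-term decomposition estimated twice by the Cauchy--Schwarz inequality, where you expand the square, discard the nonnegative term $(\hat{\mathbb F}_n-\mathbb F_n)^2$, and bound the cross term directly.
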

The following corollary gives information about the consistency of the test (\ref{Test1}).
\begin{cor}\label{cor4}
Suppose $ F-  G$ does not vanish everywhere or $ G$-almost everywhere. Then,
\begin{equation*} 
\lim_{n\rightarrow\infty}P(\KS_n\ge c_{n;1-\alpha})=1~\text{or}~\lim_{n\rightarrow\infty}P(\CvM_n\ge d_{n;1-\alpha})=1.
\end{equation*}
\end{cor}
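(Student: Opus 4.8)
The plan is to combine Theorem \ref{thm3}, which shows that the normalised test statistics are bounded below in probability by the population discrepancies, with Corollary \ref{cor2}, which identifies the null limit laws and hence controls the order of magnitude of the critical values. The key observation is that $\KS_n$ and $\CvM_n$ equal $\sqrt n$, respectively $n$, times quantities that converge in probability to a strictly positive limit under the stated alternative, whereas the critical values $c_{n;1-\alpha}$ and $d_{n;1-\alpha}$ remain bounded as $n\to\infty$. The rejection probabilities must therefore tend to $1$.

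First I would show that the critical values stay bounded. By Corollary \ref{cor2}, $\KS_n^{(n)}\overset{\mathrm d}{\to}\sup_{x\in\overline{\R}^m}|U(x)|$ and $\CvM_n^{(n)}\overset{\mathrm d}{\to}\int U^2(x)\,G(\mathrm d x)$, and both limits are a.s.\ finite, so the two sequences are uniformly tight. Recall that $c_{n;1-\alpha}$ and $d_{n;1-\alpha}$ are precisely the $(1-\alpha)$-quantiles of $\KS_n^{(n)}$ and $\CvM_n^{(n)}$. Fixing a continuity point $M$ of the limit law with $P(\sup_x|U(x)|\le M)>1-\alpha$, the portmanteau theorem gives $P(\KS_n^{(n)}\le M)\to P(\sup_x|U(x)|\le M)>1-\alpha$, so $M$ eventually exceeds the $(1-\alpha)$-quantile, i.e.\ $c_{n;1-\alpha}\le M$ for all large $n$; an analogous constant $M'$ bounds $d_{n;1-\alpha}$. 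Thus $\limsup_n c_{n;1-\alpha}<\infty$ and $\limsup_n d_{n;1-\alpha}<\infty$.

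Next I would treat the two cases separately. If $F-G$ does not vanish everywhere, then $s:=\sup_{x\in\overline{\R}^m}|F(x)-G(x)|>0$; applying Theorem \ref{thm3} with $\varepsilon=s/2$ yields $P(\tfrac{1}{\sqrt n}\KS_n\ge s/2+o_p(1))\to 1$, and since the $o_p(1)$ term vanishes in probability this forces $\tfrac{1}{\sqrt n}\KS_n\ge s/4$ with probability tending to $1$. Multiplying by $\sqrt n\to\infty$ gives $\KS_n\overset{P}{\to}\infty$, hence $P(\KS_n\ge M)\to 1$ for the fixed bound $M$ above, and therefore $P(\KS_n\ge c_{n;1-\alpha})\to 1$. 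If instead $F-G$ does not vanish $G$-almost everywhere, then $t:=\int(F(x)-G(x))^2\,G(\mathrm d x)>0$, and the corresponding part of Theorem \ref{thm3} gives, in the same way, $\tfrac 1 n\CvM_n\ge t/2$ with probability tending to $1$, so $\CvM_n\overset{P}{\to}\infty$ and $P(\CvM_n\ge d_{n;1-\alpha})\to 1$. The disjunction in the conclusion matches the disjunction in the hypothesis: a discrepancy supported on a $G$-null set drives only the Kolmogorov-Smirnov statistic.

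The main obstacle is the rigorous control of the critical values. Making ``$c_{n;1-\alpha}$ is eventually bounded'' precise requires the existence of a suitable continuity point $M$ with $P(\sup_x|U(x)|\le M)>1-\alpha$, i.e.\ that the limiting law of $\sup_x|U(x)|$ charge no interval just below $M$; this is where properties of the supremum, and of the quadratic functional $\int U^2\,G(\mathrm d x)$, of the non-degenerate Gaussian process $\mathbb U$ enter, and one should invoke the continuity of these distributions away from $0$. Everything else is a routine combination of convergence in probability, tightness, and the $\sqrt n$ and $n$ scalings.
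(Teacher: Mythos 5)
Your proposal is correct, and its skeleton is the same as the paper's: combine Theorem \ref{thm3} (which forces $\tfrac{1}{\sqrt n}\KS_n$, respectively $\tfrac1n\CvM_n$, above a strictly positive population discrepancy) with control of the critical values obtained from the Monte-Carlo limit law in Corollary \ref{cor2}. The one place where you genuinely deviate is the treatment of $c_{n;1-\alpha}$ and $d_{n;1-\alpha}$: the paper quotes the proof of Corollary \ref{cor3}, where the critical values are shown to \emph{converge} to the $(1-\alpha)$-quantiles of $\sup_x|U(x)|$ and $\int U^2\,G(\mathrm d x)$; that convergence rests on the limit distribution functions being strictly increasing on the non-negative half-line, which in turn uses the non-degeneracy of the Gaussian process $\mathbb U$ (via uniform continuity of $G$). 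You instead prove only that the critical values are eventually \emph{bounded}, via tightness of $\KS_n^{(n)}$, $\CvM_n^{(n)}$ and a portmanteau argument at a continuity point $M$ with $P(\sup_x|U(x)|\le M)>1-\alpha$. This is weaker but entirely sufficient for consistency, and it is slightly more economical: such an $M$ always exists (a distribution function has at most countably many discontinuities and tends to $1$), so, contrary to the caveat in your last paragraph, no continuity or strict-monotonicity property of the limit laws away from zero is actually needed for this corollary — that property is only required in Corollary \ref{cor3}, where asymptotically exact size demands genuine quantile convergence. The paper's version buys nothing extra here beyond reusing an already-established fact; your version makes the consistency statement independent of the non-degeneracy argument. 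The case split (KS driven by $\sup|F-G|>0$, CvM driven by $\int(F-G)^2\,\mathrm dG>0$) matches the paper's reading of the disjunction exactly.
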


\section{Testing goodness-of-fit with hypotheses given by a family of distributions}\label{simplegood2}

In this section, the model introduced in Section \ref{simplegood1} will be extended by adding a parameter. Under regularity conditions, all results still hold in this model.
Regard the definitions at the beginning of Section \ref{simplegood1}. For $d\in\N$ and a given non-empty  set $\Theta\subseteq \R^d$, maps $ G_n:\overline\R^m\times\Theta\rightarrow \R$ and $ G:\overline\R^m\times\Theta\rightarrow \R$ are given such that for all $\vartheta\in\Theta$  $ G_n(\cdot,\vartheta)$ and $ G(\cdot,\vartheta)$ are distribution functions, $ G(\cdot,\vartheta)$ is uniformly continuous and
\begin{equation*}
\forall\vartheta\in\Theta:\lim_{n\rightarrow\infty}G_n(\cdot,\vartheta)=G(\cdot,\vartheta)~\text{uniformly on}~\overline\R^m.
\end{equation*}
Suppose $F_1,\dots,F_n$ are unknown, $ G_n$ and $\alpha_{1,n},\dots,\alpha_{n,n}$ are known and that the user has to verify the hypothesis of goodness-of-fit
\begin{equation}\label{testpr2}
\mathrm H_n:\exists\vartheta\in\Theta:\mathbb F_n=G_n(\cdot,\vartheta)
\end{equation}
on the basis of the observations $X_1,\dots,X_n$.

\medskip
Define a process $\mathbb U_n(\vartheta):=(U_n(x,\vartheta);x\in\overline\R^m)$ by
\begin{equation}\label{proc2}
U_n(x,\vartheta):=\sqrt n \big( \mathbb F_n(x)-  G_n(x, \vartheta)\big),~x\in\overline\R^m.
\end{equation}
Further, let $t_n$ be a $(\otimes_{j=1}^n\BB^m,\BB_{\Theta}^d)$-measurable map $t_n:\times_{j=1}^n\R^m\rightarrow \Theta$ and put  $\hat\vartheta_n:=t_n(X_1,\dots,X_n)$. $\BB^m$ denotes the Borel $\sigma$-field on $\R^m$ and $\BB_{\Theta}^d$ denotes the Borel $\sigma$-field on $\R^d$ restricted on $\Theta$. Consider the  Kolmogorov-Smirnov type statistic
\begin{equation}\label{KS2}
\KS_n:=\sup_{x\in\overline\R^m}|U_n(x,\hat \vartheta_n)|=\sqrt n\sup_{x\in\overline\R^m}|\mathbb F_n(x)-  G_n(x,\hat \vartheta_n)|
\end{equation}
and the Cram\'er-von-Mises type statistic 
\begin{equation}\label{CvM2}
\CvM_n:=\int U^2_n(x,\hat \vartheta_n)   G_n(\mathrm d x,\hat \vartheta_n)=n\int\big(\mathbb F_n(x)-  G_n(x,\hat \vartheta_n)\big)^2   G_n(\mathrm d x,\hat \vartheta_n).
\end{equation}
In general, the statistics  (\ref{KS2}) and (\ref{CvM2}) are not distribution free, neither in the case of the existence of a $\vartheta\in\Theta$ with ${\mathbb F}_n= G_n(\cdot,\vartheta)$.  In order to approximate the distribution of the statistics $\KS_n$ and $\CvM_n$ if the existence of a $\vartheta\in\Theta$ with ${\mathbb F}_n= G_n(\cdot,\vartheta)$ is fulfilled, a Monte-Carlo procedure is suggested. Therefor, simulate independently  observations with joint distribution function
\begin{equation*}
(x_1,\dots,x_n)\longmapsto \prod_{i=1}^n\mathbb G_n(x_i,\hat\vartheta_n),~(x_1,\dots,x_n)\in \times_{i=1}^n\overline\R^m.
\end{equation*}
Determine a significance level  $\alpha\in (0,1)$ and denote by $c_{n;1-\alpha}$ the $(1-\alpha)$-quantile of the distribution of  $\KS_n$ and by $d_{n;1-\alpha}$ the $(1-\alpha)$-quantile of the distribution of $\CvM_n$ if $(F_1,\dots,F_n)=( G_n(\cdot,\hat\vartheta_n),\dots, G_n(\cdot,\hat\vartheta_n))$. The calculation procedure for practice is described above. The testing procedure
\begin{equation}\label{Test2}
\text{``Reject}~\mathrm H_n,~\text{iff}~\KS_n\ge c_{n;1-\alpha}\text{''}~\text{or testing procedure}~\text{``Reject}~\mathrm H_n,~\text{iff}~\CvM_n\ge d_{n;1-\alpha}\text{''} 
\end{equation}
is suggested. 
\begin{rmk}
Assume another sequence of known maps  $ H_i:\overline\R^m\times\Theta\rightarrow \R$, $i\in\N$, is given such that for all $\vartheta\in\Theta$  $ H_i(\cdot,\vartheta)$ is a distribution function and $ G_n=\sum_{i=1}^n \alpha_{i,n}H_i$.
\begin{itemize}
\item[a)]Extend the model introduced in Section \ref{einfuhr} in the following way. Assume the hypothesis is given by a family of distributions $\lbrace\mathcal L(Y_0,\vartheta_1); \vartheta_1\in\Theta_1\rbrace$, with a non-empty  set  $\Theta_1\subset \R^{d_1}$, $d_1\in\N$, i.e., the user has to treat the testing problem of goodness-of-fit
\begin{equation*}
\mathrm H:\exists \vartheta_1\in\Theta_1:\mathcal L(Y_1)=\mathcal L(Y_0,\vartheta_1),~\mathrm K:\forall \vartheta_1\in\Theta_1:\mathcal L(Y_1)\neq \mathcal L(Y_0,\vartheta_1).
\end{equation*}
In addition, suppose the error variables $(Z_i)_{i\in\N}$ come from  families of distributions $\lbrace(\mathcal L (Z_i,\vartheta_2))_{i\in\N};\vartheta_2\in\Theta_2\rbrace$, $i\in\N$, with a non-empty  set $\Theta_2\subset \R^{d_2}$, $d_2\in\N$, and that the true parameter $\vartheta_2\in\Theta_2$ is unknown. Moreover, assume the error functions $(e_i)_{i\in\N}$ come from families of functions $\lbrace (e_i(\cdot,\vartheta_3))_{i\in\N}; \vartheta_3\in\Theta_3\rbrace$, $i\in\N$, with a non-empty  set  $\Theta_3\subset \R^{d_3}$, $d_3\in\N$, and that the true parameter $\vartheta_3\in\Theta_3$ is unknown, too. Put $d=d_1+d_2+d_3$, $\Theta=\lbrace (\vartheta_1',\vartheta_2',\vartheta_3')';  (\vartheta_1,\vartheta_2,\vartheta_3)\in\Theta_1\times\Theta_2\times\Theta_3\rbrace$, $\alpha_{i,n}=\frac 1 n$,  $X_i=e_i((Y_i,Z_i),\vartheta_3)$ and let $H_i$ be the distribution function of $e_i((Y_0,Z_i,\vartheta_3))$, $i=1,\dots,n$, where $\vartheta_3\in\Theta_3$ is the true parameter. Then, the test (\ref{Test2}) is applicable to this testing problem. Under regularity conditions, it follows from the results in this section that the test is of asymptotically exact size $\alpha$ and consistent.
\item[b)] The test (\ref{Test2}) is applicable to the hypothesis 
\begin{equation*}
\exists \vartheta\in\Theta~\forall i\in\lbrace 1,\dots,n\rbrace :F_i=H_i(\cdot,\vartheta).
\end{equation*}
 Putting $\alpha_{i,n}=\frac 1 n$, $i=1,\dots,n$,  the test (\ref{Test2}) is also applicable to the hypothesis 
\begin{equation*}
\exists \vartheta\in\Theta~\exists \pi_n:\lbrace 1,\dots,n\rbrace\rightarrow\lbrace 1,\dots,n\rbrace~\text{bijective}~\forall i\in\lbrace 1,\dots,n\rbrace :F_i=H_{\pi_n(i)}(\cdot,\vartheta).
\end{equation*}
Under regularity conditions, it follows from the results in this section that the test is of asymptotically exact size $\alpha$ and consistent with respect to suitable alternatives to this hypotheses.
\item[c)] Assume $F_i=F$ and $H_i=G$, $i\in\N$. Putting $\alpha_{i,n}=\frac 1 n$, $i=1,\dots,n$, the hypothesis (\ref{testpr2}) is a common hypothesis of goodness-of-fit with families of distributions
\begin{equation*}
\mathrm H:\exists \vartheta\in\Theta:F=G(\cdot,\vartheta),
\end{equation*}
the test statistic (\ref{KS2}) or (\ref{CvM2}) is a common Kolmogorov-Smirnov  or  Cram\'er-von-Mises statistic with estimated parameter and the test (\ref{Test2}) is a common  Kolmogorov-Smirnov  or  Cram\'er-von-Mises test  with families of distributions in a multivariate setting. In this sense, a generalization of the identically distributed case  is considered.
\end{itemize}
\end{rmk}
\begin{rmk}
Note that the distribution function $G_n$ depends on the sample size $n$ and that the test statistics (\ref{KS2}) and (\ref{CvM2}) are no common Kolmogorov-Smirnov  and  Cram\'er-von-Mises statisticswith estimated parameter, neither in the case of $\alpha_{i,n}=\frac 1 n$, $i=1,\dots,n$. In fact, the replacement of  $G_n$ with $G$ in the test statistics (\ref{KS2}) and (\ref{CvM2}), i.e., the common Kolmogorov-Smirnov  or  Cram\'er-von-Mises statistic, yields no  applicable test for testing problem (\ref{testpr2}). This follows easily from the results in this section.
\end{rmk}

\subsection{Limit results under the hypothesis}

Fix a parameter $\vartheta\in\Theta$. In order to establish the results of Section  \ref{simplegood1} to this model, some technical assumptions are required. The first two are regularity conditions to the families of distributions given by the hypothesis.
\begin{itemize}
\item[(C1)]There exists an open and convex set $U_{\vartheta }\subset\Theta$ such that $\vartheta\in U_{\vartheta}$ and for all $x\in\overline\R^m$ $ G_n(x,\cdot):\Theta\rightarrow \R$ is continuous differentiable on $U_{\vartheta }$.
\end{itemize}
Define $ g_n(x,\tilde\vartheta):=\nabla G_n(x, \tilde\vartheta)$, $(x,\tilde\vartheta)\in\overline\R^m\times U_{\vartheta }$, where $\nabla:=(\frac {\partial}{\partial\tilde\vartheta_1},\dots,\frac {\partial}{\partial\tilde\vartheta_d})'$.
\begin{itemize}
\item[(C2)]There exists a map $ g:\overline\R^m\times U_{\vartheta}\rightarrow \R^d$ such that $\lim_{n\rightarrow\infty} g_n= g$ uniformly on $\overline\R^m\times U_{\vartheta}$, $ g$  is uniformly continuous  on $\overline\R^m\times U_{\vartheta}$ and $ g(\cdot,\vartheta):\overline\R^m\rightarrow \R^d$ has bounded components on $\overline\R^m$. 
\end{itemize}
Now, a regularity condition to the parameter estimator is formulated. Particular, an asymptotic expansion is needed. The condition is fulfilled, e.g., for a weighted maximum-likelihood estimator, see Remark \ref{bem3}.
\begin{itemize}
\item[(C3)] If ${\mathbb F}_n= G_n(\cdot,\vartheta)$ for $n$ sufficiently large, there exists a $(\BB^m,\BB^d)$-measurable map $\ell_n(\cdot,\vartheta):\R^m \rightarrow \R^d$ such that $\int \ell_n(x,\vartheta) F_i(\mathrm d x)$ and $\int \ell_n(x,\vartheta)\ell_n'(x,\vartheta) F_i(\mathrm d x)$ exist and have finite components for $i=1,\dots,n$, $\int \ell_n(x,\vartheta){\mathbb F}_n(\mathrm d x)=0$ and
\begin{equation*}
\sqrt n (\hat \vartheta_n-\vartheta)={\sqrt n}\sum_{i=1}^n\alpha_{i,n} \ell_n(X_i,\vartheta)+o_p(1)~\text{as}~n\rightarrow\infty.
\end{equation*}
\end{itemize}
Put
\begin{equation*}
 v_{i,n}( \vartheta):=\int \Big(\ell_n(x, \vartheta)-\int \ell_n(y,\vartheta) F_i(\mathrm d y)\Big)\Big(\ell_n(x, \vartheta)-\int \ell_n(y,\vartheta) F_i(\mathrm d y)\Big)'{ F}_i(\mathrm d x),~i\in\N,
\end{equation*}
and 
\begin{equation*}
 w_{i,n}(y, \vartheta):= \int \ell_n(x, \vartheta)\I(x\le y){ F}_i(\mathrm d x)-\int \ell_n(x,\vartheta) F_i(\mathrm d x)F_i(y),~y\in \overline\R^m,~i\in\N.
\end{equation*}
The following condition guarantees the convergence of the covariance function of the  process $\mathbb U_n(\vartheta)$  and that Lindeberg's condition is fulfilled.
\begin{itemize}
\item[(C4)] If ${\mathbb F}_n= G_n(\cdot,\vartheta)$ for $n$ sufficiently large, there exists a $ v(\vartheta)\in\R^{d\times d}$ and a  $ w(\cdot,\vartheta):\overline\R^m\rightarrow\R^d$ such that  $\lim_{n\rightarrow\infty} n\sum_{i=1}^n\alpha_{i,n}^2v_{i,n}( \vartheta)=\kappa v( \vartheta)$ and   $\lim_{n\rightarrow\infty} n\sum_{i=1}^n\alpha_{i,n}^2 w_{i,n}(x, \vartheta)=\kappa w(x, \vartheta)$ for all $x\in\overline\R^m$. Well, if ${\mathbb F}_n= G_n(\cdot,\vartheta)$ for $n$ sufficiently large,
\begin{equation*}
\begin{split}
\forall a\in\R^d~\forall t>0:&\lim_{n\rightarrow\infty} n \sum_{i=1}^n \alpha_{i,n}^2\int\bigg(1+ a'\Big(\ell_n(x,\vartheta)-\int \ell_n(y,\vartheta)F_i(\mathrm d y)\Big)\\
&\Big(\ell_n(x,\vartheta)-\int \ell_n(y,\vartheta)F_i(\mathrm d y)\Big)'a\bigg)\\
&\I\bigg((\sqrt n\max_{1\le j\le n}\alpha_{j,n})^2a'\Big(\ell_n(x,\vartheta)-\int \ell_n(y,\vartheta)F_i(\mathrm d y)\Big)\\
&\Big(\ell_n(x,\vartheta)-\int \ell_n(y,\vartheta)F_i(\mathrm d y)\Big)'a>  t\bigg) F_i( \mathrm d x)=0.
\end{split}
\end{equation*}
\end{itemize}

Ignore the $o_P(1)$ term in (C3). Then, the estimator $\hat \vartheta_n$ is invariant under transformation of the data from the set $\mathcal T _n$ if  (C3) holds and $\alpha_{i,n}=\frac 1 n$, $i=1,\dots,n$, and testing procedure (\ref{Test2}), in particular the statistic (\ref{KS2}) or (\ref{CvM2}), has this invariance property, too.

\medskip
The following lemma shows the weak consistency of the parameter estimator.
\begin{lem} \label{lem2} Assume ${\mathbb F}_n= G_n(\cdot,\vartheta)$ for $n$ sufficiently large and (C1) - (C4). Then,\linebreak ${\sqrt n}\sum_{i=1}^n \alpha_{i,n}(\ell_n(X_i,\vartheta)-\int \ell_n(x,\vartheta)F_i(\mathrm d x))$ converges in distribution to a centered  $d$-dimen\-sional normal distribution as $n\rightarrow\infty$ and
\begin{equation*}
\lim_{n\rightarrow\infty}\hat \vartheta_n=\vartheta~\text{in probability}.
\end{equation*}
\end{lem}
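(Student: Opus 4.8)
The plan is to recognise the displayed sum as a triangular array of row-wise independent, centered random vectors, to establish its asymptotic normality via the Lindeberg--Feller central limit theorem combined with the Cram\'er--Wold device, and then to read off the consistency of $\hat\vartheta_n$ directly from the expansion (C3).

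First I would fix the notation $u_{i,n}(x):=\ell_n(x,\vartheta)-\int\ell_n(y,\vartheta)F_i(\mathrm d y)$ and $\xi_{i,n}:=\sqrt n\,\alpha_{i,n}u_{i,n}(X_i)$, so that the vector of interest is $S_n:=\sum_{i=1}^n\xi_{i,n}$. Since $X_1,\dots,X_n$ are independent the $\xi_{i,n}$ are row-wise independent, and because $\E(\ell_n(X_i,\vartheta))=\int\ell_n(x,\vartheta)F_i(\mathrm d x)$ they are centered; the finiteness of the first two moments needed throughout is exactly what (C3) guarantees. I would also record the consequence of the constraint $\int\ell_n(x,\vartheta)\mathbb F_n(\mathrm d x)=0$ from (C3): since $\mathbb F_n=\sum_{i=1}^n\alpha_{i,n}F_i$, the aggregate centering vanishes,
\[
\sum_{i=1}^n\alpha_{i,n}\int\ell_n(x,\vartheta)F_i(\mathrm d x)=\int\ell_n(x,\vartheta)\mathbb F_n(\mathrm d x)=0,
\]
so that $S_n=\sqrt n\sum_{i=1}^n\alpha_{i,n}\ell_n(X_i,\vartheta)$ and (C3) then reads $\sqrt n(\hat\vartheta_n-\vartheta)=S_n+o_p(1)$.

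Next I would prove $S_n\overset{\mathrm d}{\to}N_d(0,\kappa v(\vartheta))$. By Cram\'er--Wold it suffices to fix $a\in\R^d$ and treat the row-wise independent centered scalar array $a'\xi_{i,n}$. Its variance is, using $\Cov(\ell_n(X_i,\vartheta))=v_{i,n}(\vartheta)$,
\[
\Var(a'S_n)=a'\Big(n\sum_{i=1}^n\alpha_{i,n}^2\,v_{i,n}(\vartheta)\Big)a\longrightarrow\kappa\,a'v(\vartheta)a
\]
by the first part of (C4). For the Lindeberg condition one must show, for every $t>0$,
\[
\sum_{i=1}^n\E\big((a'\xi_{i,n})^2\I((a'\xi_{i,n})^2>t)\big)=n\sum_{i=1}^n\alpha_{i,n}^2\int(a'u_{i,n}(x))^2\,\I\big(n\alpha_{i,n}^2(a'u_{i,n}(x))^2>t\big)F_i(\mathrm d x)\to 0.
\]
This sum is dominated by the expression in the displayed condition of (C4): one has $(a'u_{i,n})^2\le 1+(a'u_{i,n})^2$, and because $n\alpha_{i,n}^2\le(\sqrt n\max_{1\le j\le n}\alpha_{j,n})^2$ the indicator occurring in (C4) is the larger one, so the vanishing postulated there forces the Lindeberg sum to vanish. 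The Lindeberg--Feller theorem then gives $a'S_n\overset{\mathrm d}{\to}N(0,\kappa a'v(\vartheta)a)$, and Cram\'er--Wold upgrades this to the asserted $d$-dimensional convergence.

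Finally, consistency is immediate: asymptotic normality yields $S_n=O_p(1)$, whence by the expansion recorded above $\sqrt n(\hat\vartheta_n-\vartheta)=S_n+o_p(1)=O_p(1)$, i.e.\ $\hat\vartheta_n-\vartheta=O_p(n^{-1/2})$, so $\hat\vartheta_n\to\vartheta$ in probability. I expect the only genuinely delicate point to be the Lindeberg verification for a non-identically-distributed triangular array; the purpose of (C4) is precisely to package this, and the slight over-strength of its formulation (the extra $1+(\cdot)$ term and the use of $\max_{1\le j\le n}\alpha_{j,n}$ in place of $\alpha_{i,n}$) is what makes the domination clean, so that step is really bookkeeping rather than an obstacle. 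A minor point worth flagging is that $v(\vartheta)$ may be singular, but since the statement permits a degenerate limit no further argument is required.
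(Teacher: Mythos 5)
Your proposal is correct and follows essentially the same route as the paper: Cram\'er--Wold reduction, convergence of the variances via the first part of (C4), verification of Lindeberg's condition by dominating the Lindeberg sum with the displayed expression in (C4) (using exactly the bounds $n\alpha_{i,n}^2\le(\sqrt n\max_j\alpha_{j,n})^2$ and the harmless extra $1+(\cdot)$ term), and then consistency of $\hat\vartheta_n$ from (C3) together with tightness of the limiting sum. The only cosmetic difference is that you make the vanishing of the aggregate centering $\int\ell_n(x,\vartheta)\,\mathbb F_n(\mathrm d x)=0$ explicit at the outset, whereas the paper assumes the individual centerings are zero ``without loss of generality'' and invokes the same identity only in the final Slutsky step.
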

Now, the basic results under the hypothesis will be formulated. Therefor, let $\mathbb U(\vartheta)=(U(x,\vartheta);x\in\overline\R^m)$ be a Gaussian process with expectation function identically equal to zero, covariance function
\begin{equation}\label{cov3}
\begin{split}
c(x,y,\vartheta):=&\kappa\Big(  G\big(\min(x,y), \vartheta\big)-  G(x, \vartheta) G(y, \vartheta)\\
&- g'(x, \vartheta) w(y, \vartheta)- g'(y, \vartheta) w(x, \vartheta)+ g'(x, \vartheta) v(\vartheta) g(y, \vartheta)\Big),~x,y\in\overline\R^m,
\end{split}
\end{equation}
and a.s. uniformly $\rho$-continuous sample paths.
\begin{thm}\label{thm4} Assume (C1) - (C4) and ${\mathbb F}_n= G_n(\cdot,\vartheta)$  for $n$ sufficiently large. Then,
\begin{equation*}
\mathbb U_n(\hat \vartheta_n)\overset{\mathrm d}{\longrightarrow} \mathbb U(\vartheta)~\text{as}~n\rightarrow\infty.
\end{equation*}
\end{thm}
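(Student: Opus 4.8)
The plan is to linearise the estimated-parameter process around the true value $\vartheta$ and to reduce the statement to a joint weak convergence of the weighted empirical process and the leading term of the estimator. Work on the event where $\mathbb F_n=G_n(\cdot,\vartheta)$ (which occurs for $n$ large by hypothesis), so that $G_n(x,\vartheta)=\mathbb F_n(x)=\sum_{i=1}^n\alpha_{i,n}F_i(x)$ and therefore the process evaluated at the true parameter is the weighted empirical process based on $X_1,\dots,X_n$,
\begin{equation*}
U_n(x,\vartheta)=\sqrt n\big(\hat{\mathbb F}_n(x)-\mathbb F_n(x)\big)=\sqrt n\sum_{i=1}^n\alpha_{i,n}\big(\I(X_i\le x)-F_i(x)\big),~x\in\overline\R^m,
\end{equation*}
i.e. the process $\mathbb W_n$ of the previous subsection with $\xi_{i,n}=X_i$ and limiting distribution function $F=G(\cdot,\vartheta)$. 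By Theorem \ref{thm1} its marginal limit is the centred Gaussian process with covariance $\kappa(G(\min(x,y),\vartheta)-G(x,\vartheta)G(y,\vartheta))$.

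First I would Taylor-expand the estimated term. By (C1) the map $\tilde\vartheta\mapsto G_n(x,\tilde\vartheta)$ is continuously differentiable on the convex set $U_\vartheta$, so the mean value theorem produces a point $\tilde\vartheta_n(x)$ on the segment between $\vartheta$ and $\hat\vartheta_n$ with $G_n(x,\hat\vartheta_n)-G_n(x,\vartheta)=g_n'(x,\tilde\vartheta_n(x))(\hat\vartheta_n-\vartheta)$, whence
\begin{equation*}
U_n(x,\hat\vartheta_n)=U_n(x,\vartheta)-g_n'(x,\tilde\vartheta_n(x))\,\sqrt n(\hat\vartheta_n-\vartheta).
\end{equation*}
Lemma \ref{lem2} gives $\hat\vartheta_n\to\vartheta$ in probability, so $\sup_x\lVert\tilde\vartheta_n(x)-\vartheta\rVert\le\lVert\hat\vartheta_n-\vartheta\rVert\to 0$ in probability; combined with the uniform convergence $g_n\to g$ and the uniform continuity of $g$ from (C2), this yields $\sup_x|g_n(x,\tilde\vartheta_n(x))-g(x,\vartheta)|=o_p(1)$. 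Since $\sqrt n(\hat\vartheta_n-\vartheta)=O_p(1)$ (Lemma \ref{lem2} with (C3)) and $g(\cdot,\vartheta)$ is bounded, I would conclude, using (C3) to replace $\sqrt n(\hat\vartheta_n-\vartheta)$ by $\Lambda_n:=\sqrt n\sum_{i=1}^n\alpha_{i,n}\ell_n(X_i,\vartheta)$,
\begin{equation*}
U_n(x,\hat\vartheta_n)=U_n(x,\vartheta)-g'(x,\vartheta)\,\Lambda_n+o_p(1)\quad\text{uniformly in }x,
\end{equation*}
where $\int\ell_n(x,\vartheta)\mathbb F_n(\mathrm d x)=0$ makes $\Lambda_n$ the centred sum $\sqrt n\sum_i\alpha_{i,n}(\ell_n(X_i,\vartheta)-\int\ell_n(y,\vartheta)F_i(\mathrm d y))$.

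The core step is the joint convergence $(\mathbb U_n(\vartheta),\Lambda_n)\overset{\mathrm d}{\longrightarrow}(\mathbb U,\Lambda)$ in $\ell^\infty(\overline\R^m)\times\R^d$ to a centred jointly Gaussian limit. For asymptotic tightness I would invoke Lemma \ref{lem0} for the $\ell^\infty$-valued component $\mathbb U_n(\vartheta)$, the finite-dimensional block $\Lambda_n$ adding no tightness requirement. Convergence of the joint finite-dimensional distributions of $(U_n(x_1,\vartheta),\dots,U_n(x_k,\vartheta),\Lambda_n)$ to a multivariate normal follows from the Lindeberg–Feller central limit theorem for row-wise independent triangular arrays, the Lindeberg condition and the convergence of the covariances being exactly what (C4) supplies; here $n\sum_i\alpha_{i,n}^2\to\kappa$ and the definitions of $w_{i,n}$ and $v_{i,n}$ give
\begin{equation*}
\Cov(U(x),U(y))=\kappa\big(G(\min(x,y),\vartheta)-G(x,\vartheta)G(y,\vartheta)\big),~\Cov(U(x),\Lambda)=\kappa w(x,\vartheta),~\Cov(\Lambda,\Lambda)=\kappa v(\vartheta).
\end{equation*}

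Finally, the map $\Psi:\ell^\infty(\overline\R^m)\times\R^d\to\ell^\infty(\overline\R^m)$, $\Psi(f,\lambda)(x)=f(x)-g'(x,\vartheta)\lambda$, is linear and bounded, hence continuous, because $g(\cdot,\vartheta)$ is bounded by (C2). The continuous mapping theorem gives $\Psi(\mathbb U_n(\vartheta),\Lambda_n)\overset{\mathrm d}{\longrightarrow}\Psi(\mathbb U,\Lambda)$, and since the decomposition of the second paragraph reads $\mathbb U_n(\hat\vartheta_n)=\Psi(\mathbb U_n(\vartheta),\Lambda_n)+o_p(1)$ in $\ell^\infty(\overline\R^m)$, Slutsky's lemma yields $\mathbb U_n(\hat\vartheta_n)\overset{\mathrm d}{\longrightarrow}\mathbb U-g'(\cdot,\vartheta)\Lambda$. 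A direct covariance computation using the three covariances above confirms that $\mathbb U-g'(\cdot,\vartheta)\Lambda$ is the centred Gaussian process with covariance (\ref{cov3}), that is $\mathbb U(\vartheta)$, as claimed. I expect the main obstacle to be the joint weak convergence of the third paragraph: because $\mathbb U_n(\vartheta)$ and $\Lambda_n$ are built from the same observations, they must be handled as a single triangular-array functional central limit theorem with the correct cross-covariance $\kappa w$, and verifying the Lindeberg condition uniformly — precisely the content of (C4) — together with the equicontinuity from Lemma \ref{lem0} is where the real work lies.
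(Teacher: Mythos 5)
Your proposal is correct and takes essentially the same route as the paper's proof: the same mean-value linearisation $U_n(x,\hat\vartheta_n)=U_n(x,\vartheta)-g_n'(x,\tilde\vartheta_n(x))\sqrt n(\hat\vartheta_n-\vartheta)$, the same elimination of the remainder via (C2), (C3), Lemma \ref{lem2} and Slutsky, tightness from Lemma \ref{lem0}, and a Lindeberg--Cram\'er--Wold step whose variance and cross-covariance limits are supplied by (C4). The only difference is organisational: the paper treats the combined triangular array $W_{i,n}$ from (\ref{We}) directly, with Lemma \ref{lem3} providing precisely the bound needed for the Lindeberg verification, whereas you package the identical verifications as joint weak convergence of $(\mathbb U_n(\vartheta),\Lambda_n)$ in $\ell^\infty(\overline\R^m)\times\R^d$ followed by the continuous linear map $(f,\lambda)\mapsto f-g'(\cdot,\vartheta)\lambda$ and Slutsky.
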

\begin{cor}\label{cor5}Assume (C1) - (C4) and ${\mathbb F}_n= G_n(\cdot,\vartheta)$  for $n$ sufficiently large. It follows that
\begin{equation*}
\KS_n\overset{\mathrm d}{\longrightarrow}\sup_{x\in\overline\R^m}|U(x,\vartheta)|~\text{and}~\CvM_n\overset{\mathrm d}{\longrightarrow}\int U^2(x,\vartheta)   G(\mathrm d x,\vartheta)~\text{as}~n\rightarrow\infty.
\end{equation*}
\end{cor}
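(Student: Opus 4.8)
The plan is to deduce both limits from the process convergence $\mathbb U_n(\hat\vartheta_n)\overset{\mathrm d}{\longrightarrow}\mathbb U(\vartheta)$ of Theorem \ref{thm4} via the continuous mapping theorem (see \cite{van}), by exhibiting the supremum and the integral as functionals on $\ell^\infty(\overline\R^m)$ that are continuous at the (almost surely continuous) limit. For $\KS_n$ this is immediate: $\KS_n=\norm{U_n(\cdot,\hat\vartheta_n)}_{\overline\R^m}$, the map $f\mapsto\norm{f}_{\overline\R^m}$ is $1$-Lipschitz on $\ell^\infty(\overline\R^m)$, and hence $\KS_n\overset{\mathrm d}{\longrightarrow}\norm{U(\cdot,\vartheta)}_{\overline\R^m}=\sup_{x\in\overline\R^m}|U(x,\vartheta)|$.

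The work lies in $\CvM_n$, because the integrating measure $G_n(\mathrm d x,\hat\vartheta_n)$ depends both on $n$ and on the random estimator $\hat\vartheta_n$, so the continuous mapping theorem for the fixed functional $f\mapsto\int f^2\,G(\mathrm d x,\vartheta)$ does not apply directly. The first step is to show $\sup_{x\in\overline\R^m}|G_n(x,\hat\vartheta_n)-G(x,\vartheta)|\to 0$ in probability. I would split this into $|G_n(x,\hat\vartheta_n)-G_n(x,\vartheta)|$ and $|G_n(x,\vartheta)-G(x,\vartheta)|$; the latter is uniformly negligible by the assumed convergence $G_n(\cdot,\vartheta)\to G(\cdot,\vartheta)$. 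For the former, Lemma \ref{lem2} gives $\hat\vartheta_n\to\vartheta$ in probability, so $\hat\vartheta_n$ lies in a small closed ball $\overline B_\delta(\vartheta)\subset U_\vartheta$ with probability tending to one; on that event (C1) permits the mean value theorem, and (C2) (uniform convergence $g_n\to g$ with $g$ uniformly continuous and $g(\cdot,\vartheta)$ bounded) yields a uniform bound $\sup_{x,\tilde\vartheta\in\overline B_\delta(\vartheta)}\norm{g_n(x,\tilde\vartheta)}\le C$ for large $n$, whence $|G_n(x,\hat\vartheta_n)-G_n(x,\vartheta)|\le C\,|\hat\vartheta_n-\vartheta|=o_p(1)$ uniformly in $x$.

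The second step uses that $G_n(\cdot,\hat\vartheta_n)$ converges in probability to the deterministic, continuous element $G(\cdot,\vartheta)$ of $\ell^\infty(\overline\R^m)$; combining this with Theorem \ref{thm4} by Slutsky's lemma produces the joint convergence $\big(U_n(\cdot,\hat\vartheta_n),G_n(\cdot,\hat\vartheta_n)\big)\overset{\mathrm d}{\longrightarrow}\big(U(\cdot,\vartheta),G(\cdot,\vartheta)\big)$ in $\ell^\infty(\overline\R^m)\times\ell^\infty(\overline\R^m)$. The third step is to verify that $\Psi(f,M):=\int f^2\,\mathrm dM$ is continuous at every pair $(f_0,M_0)$ of continuous functions with $M_0$ a distribution function: writing $\int f_n^2\,\mathrm dM_n-\int f_0^2\,\mathrm dM_0=\int(f_n^2-f_0^2)\,\mathrm dM_n+\big(\int f_0^2\,\mathrm dM_n-\int f_0^2\,\mathrm dM_0\big)$, the first term is bounded by $\norm{f_n-f_0}_{\overline\R^m}(\norm{f_n}_{\overline\R^m}+\norm{f_0}_{\overline\R^m})\to 0$ since each $M_n$ is a probability measure, while the second vanishes because uniform convergence $M_n\to M_0$ implies weak convergence of the associated probability measures and $f_0^2$ is bounded and continuous. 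As the limit pair has $M_0=G(\cdot,\vartheta)$ continuous and $f_0=U(\cdot,\vartheta)$ with a.s. uniformly $\rho$-continuous (hence continuous) sample paths, the continuity set of $\Psi$ carries full limit mass, and the continuous mapping theorem gives $\CvM_n=\Psi\big(U_n(\cdot,\hat\vartheta_n),G_n(\cdot,\hat\vartheta_n)\big)\overset{\mathrm d}{\longrightarrow}\int U^2(x,\vartheta)\,G(\mathrm d x,\vartheta)$.

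I expect the main obstacle to be the uniform-in-$x$, in-probability control of $G_n(\cdot,\hat\vartheta_n)-G(\cdot,\vartheta)$, i.e.\ disentangling the sample-size dependence of the family from the estimated parameter, together with pinning down the joint continuity of $\Psi$ precisely at continuous arguments; once these are secured, the corollary reduces to a routine invocation of Slutsky's lemma and the continuous mapping theorem on top of Theorem \ref{thm4}.
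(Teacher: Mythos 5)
Your proposal is correct and follows essentially the same route as the paper: the paper proves Corollary \ref{cor5} by combining Theorem \ref{thm4}, Lemma \ref{lem2} and Lemma \ref{lem4} with the continuous mapping theorem (as in Corollary \ref{cor1}, with details delegated to the proof of Theorem 3 in \cite{bar}). Your first step, controlling $\sup_{x}|G_n(x,\hat\vartheta_n)-G(x,\vartheta)|$ in probability via the mean value theorem under (C1)--(C2) and the consistency of $\hat\vartheta_n$, is exactly the content of the paper's Lemma \ref{lem4} (Taylor expansion plus triangle inequality) combined with Lemma \ref{lem2}, and your Slutsky-plus-continuity argument for the integral functional spells out what the paper cites from \cite{bar}.
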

In order to show that testing procedure (\ref{Test2}) works asymptotically, it is necessary to study the asymptotic distribution of the statistics  (\ref{KS2}) and (\ref{CvM2}) if $(F_1,\dots,F_n) =\linebreak(  G_n(\cdot,\hat\vartheta_n),\dots, G_n(\cdot,\hat\vartheta_n))$, too.

\medskip
Well, let $(\vartheta_n)_{n\in\N}$ be an arbitrary sequence of parameters with $\vartheta_n\in\Theta$ for all $n\in\N$ and $\lim_{n\rightarrow\infty}\vartheta_n=\vartheta$. Further, let $X_1^{(n)},X_2^{(n)},\dots$ be a sequence of independent and identically distributed random vectors with underlying distribution function $\mathbb G_n(\cdot,\vartheta_n)$, put  $\hat \vartheta_n^{(n)}:=t_n(X_1^{(n)},\dots,X_n^{(n)})$ and let $\mathbb U_n^{(n)}(\hat \vartheta_n^{(n)})=(U_n^{(n)}(x,\hat \vartheta_n^{(n)});x\in\overline\R^m)$ be the  process (\ref{proc2}) based on the random vectors $X_1^{(n)},\dots,X_n^{(n)}$. Denote by $\KS_n^{(n)}$ and $\CvM_n^{(n)}$ the statistics (\ref{KS2}) and (\ref{CvM2}) based on the random vectors $X_1^{(n)},\dots,X_n^{(n)}$, respectively. In addition to the conditions (C1) and (C2), consider the following two conditions. This conditions are modifications of the conditions (C3) and (C4).
\begin{itemize}
\item[(C5)] There exists a map $\ell_n:\R^m\times U_{\vartheta} \rightarrow \R^d$ such that for all $\tilde\vartheta\in U_{\vartheta} $ $\ell_n(\cdot,\tilde\vartheta)$ is $( \BB^m,\BB^d)$-measurable, $\int \ell_n(x,\tilde\vartheta)  G_n(\mathrm d x,\tilde \vartheta)$ and $\int \ell_n(x,\tilde\vartheta)\ell_n'(x,\tilde\vartheta)  G_n(\mathrm d x,\tilde \vartheta)$ exist  and have finite components,  $\int \ell_n(x,\tilde\vartheta) G_n(\mathrm d x,\tilde \vartheta) =0$ and
\begin{equation*}
\sqrt n (\hat \vartheta_n^{(n)}-\vartheta_n)={\sqrt n}\sum_{i=1}^n \alpha_{i,n}\ell_n(X_i^{(n)},\vartheta_n)+o_p(1)~\text{as}~n\rightarrow\infty.
\end{equation*}
\end{itemize}
Put 
\begin{equation*}
 v_{n}^{()}( \tilde\vartheta):=\int \ell_n(x, \tilde\vartheta)\ell'_n(x, \tilde \vartheta) G_n(\mathrm d x,\tilde\vartheta),~\tilde\vartheta\in U_{\vartheta},
\end{equation*}
and 
\begin{equation*}
 w_{n}^{()}(y, \tilde\vartheta):= \int \ell_n(x, \tilde\vartheta) \I(x\le y) G_n(\mathrm d x,\tilde\vartheta),~y\in \overline\R^m,~\tilde\vartheta\in U_{\vartheta}.
\end{equation*}
\begin{itemize}
\item[(C6)] There exists a  $ v^{()}(\vartheta)\in\R^{d\times d}$ such that $\lim_{n\rightarrow\infty} v^{()}_n( \vartheta_n)= v^{()}( \vartheta)$. In addition, there exists a map $ w^{()}(\cdot,\vartheta):\overline\R^m\rightarrow\R^d$ such that for all $x\in\overline\R^m$ $\lim_{n\rightarrow\infty} w^{()}_n(x, \vartheta_n)= w^{()}(x, \vartheta)$. Well,
\begin{equation*}
\begin{split}
\forall a\in\R^d~\forall t>0:&\lim_{n\rightarrow\infty}\int \big(1+a'\ell_n(x,\vartheta_n)\ell_n'(x,\vartheta_n)a\big)\\
&\I\big((\sqrt n\max_{1\le j\le n}\alpha_{j,n})^2 a'\ell_n(x,\vartheta_n)\ell'_n(x,\vartheta_n)a>  t\big)   G_n( \mathrm d x,\vartheta_n)=0.
\end{split}
\end{equation*}
\end{itemize}
\begin{rmk}\label{bem3}
Under regularity conditions, a weighted maximum-likelihood estimation function $t_n$ fulfills the mentioned conditions. Weighted maximum-likelihood estimators are considered, e.g., by Wang and Zidek \cite{wan}, Hu and Zidek \cite{hu} and Wang, van Eeden and Zidek \cite{wan2}. Assume $\vartheta\in\Theta$ has the property ${\mathbb F}_n= G_n(\cdot,\vartheta)$  for $n$ sufficiently large. Consider $n$ sufficiently large. Suppose the existence of a density $h_n(\cdot,\tilde\vartheta)$ of  $ G_n(\cdot,\tilde\vartheta)$ for all $\tilde\vartheta\in\Theta$  with respect to a dominating $\sigma$-finite measure $\mu$ on $( \R^m,\BB^m)$ and assume  for all $x\in\R^m$ the map $\tilde \vartheta\mapsto h_n(x,\tilde\vartheta)$, $\tilde\vartheta\in\Theta$,  is two times continuous differentiable. Moreover, let $t_n$ be a weighted maximum-likelihood estimation function for $\vartheta$ with the property
\begin{equation*}
\sup_{\tilde\vartheta\in\Theta}\prod_{i=1}^n h_n^{ \alpha_{i,n}}(x_i,\tilde\vartheta)=\prod_{i=1}^n h_n^{ \alpha_{i,n}}\big(x_i,t_n(x_1,\dots,x_n)\big),~(x_1,\dots,x_n)\in \times_{i=1}^n\R^m.
\end{equation*}
Put $L_n(\cdot,\tilde\vartheta):=\log h_n(\cdot,\tilde\vartheta)$. Assume $\lbrace y\in\R^m; h_n(y,\tilde\vartheta)>0 \rbrace$ does not depend on $\tilde\vartheta\in\Theta$ and $\Theta$ is open and convex. Taylor expansion yields
\begin{equation*}
\begin{split}
&{\sqrt n}\sum_{i=1}^n    \alpha_{i,n}\frac {\partial }{\partial \tilde \vartheta_j}L_n( X_i^{(n)},\tilde\vartheta)_{|_{\tilde\vartheta= \vartheta_n}}+ \sum_{i=1}^n  \alpha_{i,n}\nabla'\frac {\partial}{\partial \tilde\vartheta_j}L_n(X_i^{(n)},\tilde \vartheta)_{|_{ \tilde\vartheta=\overline \vartheta_{j,n}}}\sqrt n(\hat \vartheta _n^{(n)} -\vartheta_n)\\
=&{\sqrt n} \sum_{i=1}^n \alpha_{i,n}\frac {\partial }{\partial \tilde\vartheta_j} L_n(X_i^{(n)}, \tilde\vartheta)_{|_{\tilde \vartheta=\hat \vartheta_n^{(n)}}},~j=1,\dots,d,
\end{split}
\end{equation*}
where $\overline \vartheta_{j,n}$ is on the line between $\hat \vartheta_n^{(n)}$ and $\vartheta_n$ ($\overline \vartheta_{j,n}$ can be chosen measurable, see Hilfssatz 6.7 of M\"uller-Funk and Witting \cite{wit}), $j=1,\dots,d$. This is equivalent to
\begin{equation*}
\begin{split}
&{\sqrt n}\sum_{i=1}^n   \alpha_{i,n}\nabla L_n( X_i^{(n)},\tilde\vartheta)_{|_{\tilde\vartheta= \vartheta_n}}
+
\begin{pmatrix}
 \sum_{i=1}^n  \alpha_{i,n}\nabla'\frac {\partial}{\partial \tilde\vartheta_1}L_n(X_i^{(n)}, \tilde\vartheta)_{|_{ \tilde \vartheta=\overline \vartheta_{1,n}}}\\
\vdots \\
 \sum_{i=1}^n  \alpha_{i,n}\nabla'\frac {\partial}{\partial \tilde\vartheta_d}L_n(X_i^{(n)}, \tilde\vartheta)_{|_{\tilde \vartheta=\overline \vartheta_{d,n}}}
 \end{pmatrix}
\sqrt n(\hat \vartheta _n^{(n)} -\vartheta_n)\\
=&{\sqrt n} \sum_{i=1}^n \alpha_{i,n} \nabla L_n(X_i^{(n)}, \tilde\vartheta)_{|_{\tilde \vartheta=\hat \vartheta_n^{(n)}}}.
\end{split}
\end{equation*}
Suppose  $\hat\vartheta^{(n)}_n$ has the property
\begin{equation*}
\lim_{n\rightarrow\infty}{\sqrt n} \sum_{i=1}^n  \alpha_{i,n}\nabla L_n(X_i^{(n)}, \tilde\vartheta)_{|_{ \tilde\vartheta=\hat \vartheta_n^{(n)}}}=0~\text{in probability}
\end{equation*}
and assume the existence of a matrix $i(\vartheta_n)\in\R^{d\times d}$ such that $i(\vartheta_n)$ is invertible and
\begin{equation*}
\lim_{n\rightarrow\infty}\left(
\begin{pmatrix}
 \sum_{i=1}^n  \alpha_{i,n}\nabla'\frac {\partial}{\partial \tilde\vartheta_1}L_n(X_i^{(n)}, \tilde\vartheta)_{|_{ \tilde \vartheta=\overline \vartheta_{1,n}}}\\
\vdots \\
\sum_{i=1}^n  \alpha_{i,n}\nabla'\frac {\partial}{\partial \tilde\vartheta_d}L_n(X_i^{(n)}, \tilde\vartheta)_{|_{\tilde \vartheta=\overline \vartheta_{d,n}}}
 \end{pmatrix}
+i(\vartheta_n)\right)=0~\text{in probability}.
\end{equation*}
Typically, $i$ is the Fisher information matrix with respect to the distribution function $G(\cdot,\tilde\vartheta), \tilde\vartheta\in\Theta$. Put $\ell_n(x,\tilde\vartheta):=i^{-1}(\tilde\vartheta)\nabla L_n(x,\tilde\vartheta)$, $(x,\tilde\vartheta)\in\overline \R ^m\times \Theta$, and note that
\begin{equation*}
\sqrt n(\hat \vartheta _n^{(n)} -\vartheta_n)= {\sqrt n}\sum_{i=1}^n  \alpha_{i,n} \ell_n( X_i^{(n)},\vartheta_n)+o_P(1)~\text{as}~n\rightarrow\infty.
\end{equation*}
Further, assume  for all $\tilde\vartheta\in\Theta$ $\int \ell_n(x,\tilde\vartheta)  G_n(\mathrm d x,\tilde \vartheta)$ and $\int \ell_n(x,\tilde\vartheta)\ell_n'(x,\tilde\vartheta)  G_n(\mathrm d x,\tilde \vartheta)$ exist and have finite components and
\begin{equation}\label{mleq}
\int \ell_n(x,\tilde\vartheta) G_n(\mathrm d x,\tilde \vartheta)=\int \ell_n(x,\tilde\vartheta)h_n(x,\tilde \vartheta)\mu(\mathrm d x)=0~\text{for all}~\tilde\vartheta\in\Theta.
\end{equation}
(\ref{mleq}) is a typical condition related to maximum-likelihood. Thus, the estimator $\hat \vartheta_n^{(n)}$ has the structure required in (C5). Now, assume $P(X_i\in\lbrace y\in\R^m; h_n(y,\tilde\vartheta)>0 \rbrace)=1$, $i=1,\dots,n$. Analogous,
\begin{equation*}
\begin{split}
&{\sqrt n}\sum_{i=1}^n  \alpha_{i,n}  \frac {\partial }{\partial \tilde \vartheta_j}L_n( X_i,\tilde\vartheta)_{|_{\tilde\vartheta= \vartheta}}+ \sum_{i=1}^n  \alpha_{i,n}\nabla'\frac {\partial}{\partial \tilde\vartheta_j}L_n(X_i,\tilde \vartheta)_{|_{ \tilde\vartheta=\overline {\overline\vartheta}_{j,n}}}\sqrt n(\hat \vartheta _n -\vartheta)\\
=&{\sqrt n} \sum_{i=1}^n \alpha_{i,n} \frac {\partial }{\partial \tilde\vartheta_j} L_n(X_i, \tilde\vartheta)_{|_{\tilde \vartheta=\hat \vartheta_n}},~j=1,\dots,d,
\end{split}
\end{equation*}
where $\overline {\overline\vartheta}_{j,n}$ is on the line between $\hat \vartheta_n$ and $\vartheta$, $j=1,\dots,d$, and
\begin{equation*}
\begin{split}
&{\sqrt n}\sum_{i=1}^n  \nabla L_n( X_i,\tilde\vartheta)_{|_{\tilde\vartheta= \vartheta}}
+
\begin{pmatrix}
 \sum_{i=1}^n  \alpha_{i,n}\nabla'\frac {\partial}{\partial \tilde\vartheta_1}L_n(X_i, \tilde\vartheta)_{|_{ \tilde \vartheta=\overline {\overline\vartheta}_{1,n}}}\\
\vdots \\
 \sum_{i=1}^n  \alpha_{i,n}\nabla'\frac {\partial}{\partial \tilde\vartheta_d}L_n(X_i, \tilde\vartheta)_{|_{\tilde \vartheta=\overline {\overline\vartheta}_{d,n}}}
 \end{pmatrix}
\sqrt n(\hat \vartheta _n -\vartheta)\\
=&{\sqrt n} \sum_{i=1}^n \alpha_{i,n} \nabla L_n(X_i, \tilde\vartheta)_{|_{\tilde \vartheta=\hat \vartheta_n}}.
\end{split}
\end{equation*}
Suppose
\begin{equation*}
\lim_{n\rightarrow\infty}{\sqrt n} \sum_{i=1}^n  \alpha_{i,n}\nabla L_n(X_i, \tilde\vartheta)_{|_{ \tilde\vartheta=\hat \vartheta_n}}=0~\text{in probability}
\end{equation*}
and
\begin{equation*}
\lim_{n\rightarrow\infty}
\begin{pmatrix}
 \sum_{i=1}^n  \alpha_{i,n}\nabla'\frac {\partial}{\partial \tilde\vartheta_1}L_n(X_i, \tilde\vartheta)_{|_{ \tilde \vartheta=\overline {\overline\vartheta}_{1,n}}}\\
\vdots \\
 \sum_{i=1}^n \alpha_{i,n} \nabla'\frac {\partial}{\partial \tilde\vartheta_d}L_n(X_i, \tilde\vartheta)_{|_{\tilde \vartheta=\overline {\overline\vartheta}_{d,n}}}
 \end{pmatrix}
=-i(\vartheta)~\text{in probability},
\end{equation*}
too. Regard that ${\mathbb F}_n= G_n(\cdot, \vartheta)$. Consequently,
\begin{equation*}
\sqrt n(\hat \vartheta _n -\vartheta)={\sqrt n}\sum_{i=1}^n \alpha_{i,n}  \ell_n( X_i,\vartheta)+o_P(1)~\text{as}~n\rightarrow\infty.
\end{equation*}
Assume  for all $i=1,\dots,n$, $\int \ell_n(x,\vartheta) F_i(\mathrm d x)$ and $\int \ell_n(x,\vartheta)\ell_n'(x,\vartheta) F_i(\mathrm d x)$ exist  and have finite components. In fact,
\begin{equation*}
\int \ell_n(x,\vartheta){\mathbb F}_n(\mathrm d x)=\int \ell_n(x,\vartheta){ G}_n(\mathrm d x,\vartheta)=0.
\end{equation*}
Therefore, the estimator $\hat \vartheta_n$ has the structure required in (C3), too.
\end{rmk}
\begin{lem} \label{lem5} Assume (C1), (C2), (C5) and (C6). Then, ${\sqrt n}\sum_{i=1}^n \alpha_{i,n} \ell_n(X_i^{(n)},\vartheta_n)$ converges in distribution to a centered $d$-dimensional normal distribution as $n\rightarrow\infty$ and
\begin{equation*}
\lim_{n\rightarrow\infty}\hat \vartheta_n^{(n)}=\vartheta~\text{in probability}.
\end{equation*}
\end{lem}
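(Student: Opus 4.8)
The plan is to identify $\sqrt n\sum_{i=1}^n\alpha_{i,n}\ell_n(X_i^{(n)},\vartheta_n)$ as the sum over a triangular array of row-wise independent, centered random vectors and to prove its asymptotic normality via the multivariate Lindeberg--Feller central limit theorem, reached through the Cram\'er--Wold device; the weak consistency of $\hat\vartheta_n^{(n)}$ then follows at once from the expansion in (C5). For $i=1,\dots,n$ put $Y_{i,n}:=\sqrt n\,\alpha_{i,n}\ell_n(X_i^{(n)},\vartheta_n)$. Since $X_1^{(n)},\dots,X_n^{(n)}$ are independent with common distribution function $G_n(\cdot,\vartheta_n)$, the $Y_{1,n},\dots,Y_{n,n}$ are independent for fixed $n$, and for $n$ so large that $\vartheta_n\in U_{\vartheta}$ the centering requirement $\int\ell_n(x,\vartheta_n)G_n(\mathrm dx,\vartheta_n)=0$ from (C5) gives $\E(Y_{i,n})=0$, so that $\sum_{i=1}^nY_{i,n}=\sqrt n\sum_{i=1}^n\alpha_{i,n}\ell_n(X_i^{(n)},\vartheta_n)$ is centered. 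By the Cram\'er--Wold device it suffices to treat, for an arbitrary $a\in\R^d$, the scalar array $S_{i,n}:=a'Y_{i,n}$, which is again row-wise independent and centered.

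For the variances, since $\E(S_{i,n})=0$,
\[
\sum_{i=1}^n\var(S_{i,n})=\Big(n\sum_{i=1}^n\alpha_{i,n}^2\Big)\,a'v^{()}_n(\vartheta_n)a,
\]
and by (\ref{seq}) the factor $n\sum_{i=1}^n\alpha_{i,n}^2$ converges to $\kappa$ while by (C6) $v^{()}_n(\vartheta_n)\to v^{()}(\vartheta)$, so the sum of variances tends to $\sigma_a^2:=\kappa\,a'v^{()}(\vartheta)a$. The crucial step is the Lindeberg condition. Writing $\ell_n=\ell_n(X_i^{(n)},\vartheta_n)$, one has $S_{i,n}^2=n\alpha_{i,n}^2\,a'\ell_n\ell_n'a$, and since $\alpha_{i,n}\le\max_{1\le j\le n}\alpha_{j,n}$,
\[
\I(|S_{i,n}|>\varepsilon)=\I\big(n\alpha_{i,n}^2a'\ell_n\ell_n'a>\varepsilon^2\big)\le\I\big((\sqrt n\max_{1\le j\le n}\alpha_{j,n})^2a'\ell_n\ell_n'a>\varepsilon^2\big)
\]
for every $\varepsilon>0$. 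Taking expectations under the common law $G_n(\cdot,\vartheta_n)$, the dominating indicator no longer depends on $i$, whence
\[
\sum_{i=1}^n\E\big(S_{i,n}^2\I(|S_{i,n}|>\varepsilon)\big)\le\Big(n\sum_{i=1}^n\alpha_{i,n}^2\Big)\int a'\ell_n\ell_n'a\,\I\big((\sqrt n\max_{1\le j\le n}\alpha_{j,n})^2a'\ell_n\ell_n'a>\varepsilon^2\big)G_n(\mathrm dx,\vartheta_n).
\]
Using $a'\ell_n\ell_n'a\le 1+a'\ell_n\ell_n'a$, the integral is bounded by the quantity in (C6) with $t=\varepsilon^2$, which tends to $0$; as the prefactor stays bounded (it converges to $\kappa$), the whole expression vanishes in the limit, establishing Lindeberg's condition.

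The Lindeberg--Feller theorem now gives $\sum_{i=1}^nS_{i,n}\overset{\mathrm d}{\longrightarrow}N(0,\sigma_a^2)$, and since $a\in\R^d$ was arbitrary the Cram\'er--Wold device yields that $\sqrt n\sum_{i=1}^n\alpha_{i,n}\ell_n(X_i^{(n)},\vartheta_n)$ converges in distribution to the centered $d$-dimensional normal distribution with covariance matrix $\kappa v^{()}(\vartheta)$. For the second assertion, convergence in distribution implies $\sqrt n\sum_{i=1}^n\alpha_{i,n}\ell_n(X_i^{(n)},\vartheta_n)=O_p(1)$, so the expansion in (C5) gives $\sqrt n(\hat\vartheta_n^{(n)}-\vartheta_n)=O_p(1)$ and hence $\hat\vartheta_n^{(n)}-\vartheta_n=O_p(n^{-1/2})=o_p(1)$. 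Combined with the deterministic convergence $\vartheta_n\to\vartheta$, this gives $\hat\vartheta_n^{(n)}\to\vartheta$ in probability.

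I expect the only genuinely non-routine point to be the verification of the Lindeberg condition, specifically the passage from the summand-dependent truncation level $n\alpha_{i,n}^2(\cdots)$ to the $i$-free level $(\sqrt n\max_{1\le j\le n}\alpha_{j,n})^2(\cdots)$ by monotonicity in $\alpha_{i,n}$, which is precisely what makes (C6) applicable; everything else---the moment computations, the Cram\'er--Wold reduction and the deduction of consistency from the $O_p(1)$ bound---is routine. The argument runs parallel to the proof of Lemma \ref{lem2}, but is simpler because here the observations are identically distributed according to $G_n(\cdot,\vartheta_n)$, so that $v_{i,n}$ and $w_{i,n}$ collapse to $v^{()}_n$ and $w^{()}_n$ and the centering is built directly into (C5).
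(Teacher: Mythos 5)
Your proof is correct and takes essentially the same route as the paper: the paper proves Lemma \ref{lem5} by declaring it analogous to Lemma \ref{lem2}, whose proof is precisely your argument --- Cram\'er--Wold reduction, verification of Lindeberg's condition by replacing $\alpha_{i,n}$ with $\max_{1\le j\le n}\alpha_{j,n}$ in the truncation so that the condition in (C6) (there (C4)) applies, and then consistency of the estimator from the expansion in (C5) (there (C3)) together with Slutsky's theorem. Your cosmetic deviations --- stating Lindeberg with a fixed threshold and convergent variance sum rather than normalizing by $s_n^2$, and phrasing the last step as $\hat\vartheta_n^{(n)}-\vartheta_n=O_p(n^{-1/2})=o_p(1)$ --- are equivalent to what the paper does.
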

Let $\mathbb U^{()}(\vartheta)=(U^{()}(x,\vartheta);x\in\overline\R^m)$ be a Gaussian process with expectation function identically equal to zero, covariance function
\begin{equation}\label{cov4}
\begin{split}
c^{()}(x,y,\vartheta):=&\kappa\Big(  G\big(\min(x,y), \vartheta\big)-  G(x,\vartheta)  G(y,\vartheta)\\
&- g'(x, \vartheta) w^{()}(y, \vartheta)- g'(y, \vartheta) w^{()}(x, \vartheta)+ g'(x, \vartheta) v^{()}(\vartheta) g(y, \vartheta)\Big),~x,y\in\overline\R^m,
\end{split}
\end{equation}
and a.s. uniformly $\rho$-continuous sample paths. This process is a modification of the process $\mathbb U(\vartheta)$.
\begin{thm}\label{thm5} Assume (C1), (C2), (C5) and (C6). Then,
\begin{equation*}
\mathbb U_n^{(n)}(\hat \vartheta_n^{(n)})\overset{\mathrm d}{\longrightarrow} \mathbb U^{()}(\vartheta)~\text{as}~n\rightarrow\infty.
\end{equation*}
\end{thm}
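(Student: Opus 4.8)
The plan is to mirror the proof of Theorem \ref{thm4}, replacing the roles of (C3)--(C4) by (C5)--(C6) and exploiting that here $X_1^{(n)},\dots,X_n^{(n)}$ are row-wise independent and identically distributed with common distribution function $G_n(\cdot,\vartheta_n)$, $\vartheta_n\to\vartheta$. First I would split the process into an empirical part and a parameter-estimation part,
\begin{equation*}
U_n^{(n)}(x,\hat\vartheta_n^{(n)})=A_n(x)-\sqrt n\big(G_n(x,\hat\vartheta_n^{(n)})-G_n(x,\vartheta_n)\big),\quad A_n(x):=\sqrt n\Big(\sum_{i=1}^n\alpha_{i,n}\I(X_i^{(n)}\le x)-G_n(x,\vartheta_n)\Big),
\end{equation*}
and then linearize the second term. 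By (C1) and the mean value theorem applied to the real valued map $G_n(x,\cdot)$ there is an intermediate point $\overline\vartheta_{x,n}$ on the segment between $\hat\vartheta_n^{(n)}$ and $\vartheta_n$ with $G_n(x,\hat\vartheta_n^{(n)})-G_n(x,\vartheta_n)=g_n(x,\overline\vartheta_{x,n})'(\hat\vartheta_n^{(n)}-\vartheta_n)$. By Lemma \ref{lem5}, $\hat\vartheta_n^{(n)}\to\vartheta$ in probability and $\sqrt n(\hat\vartheta_n^{(n)}-\vartheta_n)=O_p(1)$, so $\overline\vartheta_{x,n}\to\vartheta$ in probability uniformly in $x$ (its distance to $\vartheta$ is bounded by $\max(\|\hat\vartheta_n^{(n)}-\vartheta\|,\|\vartheta_n-\vartheta\|)$); together with the uniform convergence $g_n\to g$ and the uniform continuity of $g$ from (C2) this replaces $g_n(x,\overline\vartheta_{x,n})$ by $g(x,\vartheta)$ with an error that is $o_p(1)$ uniformly in $x$. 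Inserting the expansion from (C5) gives
\begin{equation*}
U_n^{(n)}(x,\hat\vartheta_n^{(n)})=A_n(x)-g(x,\vartheta)'\sqrt n\sum_{i=1}^n\alpha_{i,n}\ell_n(X_i^{(n)},\vartheta_n)+o_p(1)\quad\text{in }\ell^\infty(\overline\R^m),
\end{equation*}
where the remainder is uniform because $g(\cdot,\vartheta)$ has bounded components by (C2).

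Since $\int\ell_n(x,\vartheta_n)G_n(\mathrm dx,\vartheta_n)=0$ by (C5), the vectors $\ell_n(X_i^{(n)},\vartheta_n)$ are centered, and the leading term is a single weighted empirical process $\mathbb V_n:=(V_n(x);x\in\overline\R^m)$ with $V_n(x)=\sqrt n\sum_{i=1}^n\alpha_{i,n}\phi_{n,x}(X_i^{(n)})$ and centered, row-wise i.i.d.\ summands $\phi_{n,x}(X_i^{(n)})=(\I(X_i^{(n)}\le x)-G_n(x,\vartheta_n))-g(x,\vartheta)'\ell_n(X_i^{(n)},\vartheta_n)$. I would then prove $\mathbb V_n\overset{\mathrm d}{\longrightarrow}\mathbb U^{()}(\vartheta)$ by the two-step criterion recalled before Lemma \ref{lem0}: asymptotic $\rho$-equicontinuity plus convergence of the finite dimensional distributions. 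For equicontinuity, the empirical part $A_n$ is asymptotically $\rho$-equicontinuous by Lemma \ref{lem0}, applied with $\xi_{i,n}=X_i^{(n)}$, $K_{i,n}=G_n(\cdot,\vartheta_n)$ and $K=G(\cdot,\vartheta)$; note $\sup_x|G_n(x,\vartheta_n)-G(x,\vartheta)|\to0$ follows from (C1)--(C2) and the assumed uniform convergence of $G_n(\cdot,\vartheta)$. The remaining part equals the deterministic map $x\mapsto g(x,\vartheta)$ times the $x$-free vector $\sqrt n\sum_i\alpha_{i,n}\ell_n(X_i^{(n)},\vartheta_n)=O_p(1)$ (Lemma \ref{lem5}); since $g(\cdot,\vartheta)$ is continuous on the compact space $(\overline\R^m,\rho)$ it is $\rho$-uniformly continuous, so this part is asymptotically $\rho$-equicontinuous as well.

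For the finite dimensional distributions I would fix $x_1,\dots,x_k$, apply the Cram\'er--Wold device to reduce to scalar weighted row-wise i.i.d.\ sums, and invoke the Lindeberg--Feller central limit theorem. The covariance of $V_n(x)$ and $V_n(y)$ equals
\begin{equation*}
G_n(\min(x,y),\vartheta_n)-G_n(x,\vartheta_n)G_n(y,\vartheta_n)-g(y,\vartheta)'w_n^{()}(x,\vartheta_n)-g(x,\vartheta)'w_n^{()}(y,\vartheta_n)+g(x,\vartheta)'v_n^{()}(\vartheta_n)g(y,\vartheta),
\end{equation*}
which by $G_n(\cdot,\vartheta_n)\to G(\cdot,\vartheta)$ and the convergences $v_n^{()}(\vartheta_n)\to v^{()}(\vartheta)$, $w_n^{()}(x,\vartheta_n)\to w^{()}(x,\vartheta)$ from (C6), multiplied by $n\sum_i\alpha_{i,n}^2\to\kappa$ from (\ref{seq}), tends to exactly $c^{()}(x,y,\vartheta)$ in (\ref{cov4}). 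After bounding the weights by $\sqrt n\max_j\alpha_{j,n}\to0$, the Lindeberg condition reduces precisely to the array display in (C6), whose ``$1+\cdots$'' term absorbs the uniformly bounded indicator contribution.

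Combining asymptotic equicontinuity with the finite dimensional convergence yields $\mathbb V_n\overset{\mathrm d}{\longrightarrow}\mathbb U^{()}(\vartheta)$ by Theorem 1.5.4 of \cite{van}, and the uniform $o_p(1)$ remainder from the first paragraph transfers this convergence to $\mathbb U_n^{(n)}(\hat\vartheta_n^{(n)})$ by Slutsky's lemma in $\ell^\infty(\overline\R^m)$. I expect the main obstacle to be the first step: making the Taylor linearization hold uniformly in $x$ with a genuine $o_p(1)$ remainder in the supremum norm, which is where (C1), (C2) and the consistency in Lemma \ref{lem5} must be combined carefully; by contrast the equicontinuity and the Lindeberg verification are essentially encapsulated by Lemma \ref{lem0} and condition (C6).
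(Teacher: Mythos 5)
Your proposal is correct and follows essentially the same route as the paper's proof: Taylor/mean-value linearization of $G_n(x,\cdot)$ at $\vartheta_n$, substitution of the expansion (C5), a uniform $o_p(1)$ remainder via (C2), Lemma \ref{lem5} and Slutsky, reduction to the centered process $V_n^{(n)}(x,\vartheta_n)=\sqrt n\big(\hat{\mathbb F}^{(n)}_n(x)-G_n(x,\vartheta_n)\big)-g'(x,\vartheta)\sqrt n\sum_{i}\alpha_{i,n}\ell_n(X_i^{(n)},\vartheta_n)$, equicontinuity of its two pieces via Lemma \ref{lem0} and (C2), fidi convergence by Cram\'er--Wold and Lindeberg using (C6) (the paper packages the Lindeberg bound in Lemma \ref{lemtech2}, which is exactly your "the $1+\cdots$ term absorbs the bounded indicator contribution" remark), and conclusion by Theorem 1.5.4 of \cite{van}. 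The covariance limit you display is the same one the paper obtains via Lemma \ref{lem4} and (C6).
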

\begin{cor}\label{cor6}Assume (C1), (C2), (C5) and (C6). Then,
\begin{equation*}
\KS_n^{(n)}\overset{\mathrm d}{\longrightarrow}\sup_{x\in\overline\R^m}|U^{()}(x,\vartheta)|~\text{and}~\CvM_n^{(n)}\overset{\mathrm d}{\longrightarrow}\int \big(U^{()}(x,\vartheta)\big)^2   G(\mathrm d x,\vartheta)~\text{as}~n\rightarrow\infty.
\end{equation*}
\end{cor}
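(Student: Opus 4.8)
The plan is to derive Corollary \ref{cor6} from Theorem \ref{thm5} by the continuous mapping theorem, treating the two statistics separately exactly as in the proofs of Corollaries \ref{cor2} and \ref{cor5}. For the Kolmogorov-Smirnov statistic the step is immediate: the map $f\mapsto\|f\|_{\overline\R^m}=\sup_{x\in\overline\R^m}|f(x)|$ is Lipschitz, hence continuous, on $\ell^\infty(\overline\R^m)$, and since $\KS_n^{(n)}=\|\mathbb U_n^{(n)}(\hat\vartheta_n^{(n)})\|_{\overline\R^m}$ while $\mathbb U_n^{(n)}(\hat\vartheta_n^{(n)})\overset{\mathrm d}{\longrightarrow}\mathbb U^{()}(\vartheta)$ by Theorem \ref{thm5}, the continuous mapping theorem yields $\KS_n^{(n)}\overset{\mathrm d}{\longrightarrow}\sup_{x\in\overline\R^m}|U^{()}(x,\vartheta)|$.

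The Cram\'er-von-Mises part is more delicate because the integrating measure $G_n(\cdot,\hat\vartheta_n^{(n)})$ is random and changes with $n$. First I would show that $\sup_{x\in\overline\R^m}|G_n(x,\hat\vartheta_n^{(n)})-G(x,\vartheta)|\to 0$ in probability. By (C1) the map $G_n(x,\cdot)$ is continuously differentiable on the convex set $U_\vartheta$, so a mean value expansion gives $G_n(x,\hat\vartheta_n^{(n)})-G_n(x,\vartheta)=g_n(x,\bar\vartheta_n)'(\hat\vartheta_n^{(n)}-\vartheta)$ with $\bar\vartheta_n$ on the segment between $\hat\vartheta_n^{(n)}$ and $\vartheta$; the uniform convergence $g_n\to g$ and the boundedness of $g$ from (C2) keep $g_n(x,\bar\vartheta_n)$ uniformly bounded for large $n$ with probability tending to one, while Lemma \ref{lem5} provides $\hat\vartheta_n^{(n)}\to\vartheta$ in probability, so this increment is $o_p(1)$ uniformly in $x$. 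Together with $\sup_{x\in\overline\R^m}|G_n(x,\vartheta)-G(x,\vartheta)|\to 0$ this establishes the claim.

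I would then read $\CvM_n^{(n)}$ as the value of the functional $\Psi(f,H):=\int f^2\,\mathrm dH$ at the pair $(\mathbb U_n^{(n)}(\hat\vartheta_n^{(n)}),G_n(\cdot,\hat\vartheta_n^{(n)}))$. Since the second coordinate converges in probability to the deterministic limit $G(\cdot,\vartheta)$, a Slutsky-type argument gives joint convergence of this pair to $(\mathbb U^{()}(\vartheta),G(\cdot,\vartheta))$. It remains to check that $\Psi$ is continuous at pairs $(f,G(\cdot,\vartheta))$ with $f$ bounded and uniformly $\rho$-continuous, which are the almost sure paths of $\mathbb U^{()}(\vartheta)$. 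Writing $|\int f_n^2\,\mathrm dH_n-\int f^2\,\mathrm dG(\cdot,\vartheta)|\le\|f_n^2-f^2\|_{\overline\R^m}+|\int f^2\,\mathrm dH_n-\int f^2\,\mathrm dG(\cdot,\vartheta)|$ with $f_n\to f$ and $H_n\to G(\cdot,\vartheta)$ uniformly, the first term vanishes because $H_n$ is a probability measure and $f$ is bounded, and the second vanishes because uniform convergence of the distribution functions forces weak convergence of the associated probability measures, so the portmanteau theorem applies to the bounded continuous integrand $f^2$ (note that $\rho$-continuity implies continuity in the usual topology on $\overline\R^m$). The continuous mapping theorem (see \cite{van}) then delivers $\CvM_n^{(n)}\overset{\mathrm d}{\longrightarrow}\int(U^{()}(x,\vartheta))^2\,G(\mathrm d x,\vartheta)$.

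I expect the Cram\'er-von-Mises part to be the main obstacle, the crux being the legitimacy of interchanging the limit with integration when the integrating measure is itself random and $n$-dependent. This is exactly what the uniform-in-$x$ convergence of $G_n(\cdot,\hat\vartheta_n^{(n)})$ to $G(\cdot,\vartheta)$ in probability, combined with the joint continuity of $\Psi$, is designed to secure; condition (C1) and the uniform gradient control in (C2) are the ingredients that tame the dependence of $G_n$ on the estimated parameter.
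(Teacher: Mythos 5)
Your proposal is correct and follows essentially the paper's own route: the paper proves Corollary \ref{cor6} by combining Theorem \ref{thm5} with Lemma \ref{lem4} (uniform convergence of $G_n(\cdot,\vartheta_n)$ to $G(\cdot,\vartheta)$ along parameter sequences, obtained by the same Taylor-expansion argument under (C1)--(C2) that you give) and Lemma \ref{lem5} (consistency of $\hat\vartheta_n^{(n)}$), then applying the continuous mapping theorem exactly as in the proof of Corollary \ref{cor1}. Your write-up merely inlines the content of Lemma \ref{lem4} and the continuous-mapping details (sup-norm functional, joint convergence with the random integrating measure) that the paper delegates to the cited reference.
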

The following result implies that the test (\ref{Test2}) is of asymptotically exact size $\alpha$.
\begin{cor}\label{cor7}
Suppose  the existence of a $\vartheta\in\Theta$ with ${\mathbb F}_n= G_n(\cdot,\vartheta)$  for $n$ sufficiently large and  (C1) - (C4) hold for this parameter. In addition, assume  (C5) and (C6) hold for arbitrary sequences $(\vartheta_n)_{n\in\N}$  with $\vartheta_n\in\Theta$ for all $n\in\N$ and $\lim_{n\rightarrow\infty}\vartheta_n=\vartheta$ and  $ v^{()}(\vartheta)= v(\vartheta)$ and $ w^{()}(x,\vartheta)= w(x,\vartheta)$ for all $x\in \overline\R^m$. Moreover, suppose  the restriction of the covariance function (\ref{cov3}) to the diagonal of  $\overline \R^m\times\overline \R^m$ does not vanish everywhere or $ G(\cdot,\vartheta)$-almost everywhere. Then,
\begin{equation*} 
\lim_{n\rightarrow\infty}P(\KS_n\ge c_{n;1-\alpha})=\alpha~\text{or}~\lim_{n\rightarrow\infty}P(\CvM_n\ge d_{n;1-\alpha})=\alpha.
\end{equation*}
\end{cor}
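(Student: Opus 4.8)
The plan is to mirror the argument behind Corollary \ref{cor3} and to reduce the claim to the convergence of the (now random) Monte-Carlo quantiles together with a Slutsky-type argument, the genuinely new ingredient being the treatment of the estimated parameter $\hat\vartheta_n$ sitting inside the critical value. First I would exploit the matching assumptions $v^{()}(\vartheta)=v(\vartheta)$ and $w^{()}(x,\vartheta)=w(x,\vartheta)$: comparing the covariance functions (\ref{cov3}) and (\ref{cov4}) term by term gives $c^{()}(x,y,\vartheta)=c(x,y,\vartheta)$ for all $x,y\in\overline\R^m$, so the limiting Gaussian processes $\mathbb U(\vartheta)$ and $\mathbb U^{()}(\vartheta)$ have the same law. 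Hence the limit variables appearing in Corollary \ref{cor5} and in Corollary \ref{cor6} coincide in distribution; write $S:=\sup_{x\in\overline\R^m}|U(x,\vartheta)|$ and $Q:=\int U^2(x,\vartheta)\,G(\mathrm d x,\vartheta)$ and let $q_{1-\alpha}$ and $r_{1-\alpha}$ be their respective $(1-\alpha)$-quantiles. Under the hypothesis, Corollary \ref{cor5} already yields $\KS_n\overset{\mathrm d}{\longrightarrow}S$ and $\CvM_n\overset{\mathrm d}{\longrightarrow}Q$.

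Next I would handle the critical values, which are random through the plug-in $\hat\vartheta_n$, namely $c_{n;1-\alpha}=c_{n;1-\alpha}(\hat\vartheta_n)$ and $d_{n;1-\alpha}=d_{n;1-\alpha}(\hat\vartheta_n)$, where $c_{n;1-\alpha}(\tilde\vartheta)$ denotes the $(1-\alpha)$-quantile of the statistic $\KS_n^{(n)}$ built from i.i.d.\ data with distribution $G_n(\cdot,\tilde\vartheta)$. For a \emph{deterministic} sequence $\vartheta_n\to\vartheta$ the conditions (C5), (C6) hold by assumption, so Corollary \ref{cor6} gives $\KS_n^{(n)}\overset{\mathrm d}{\longrightarrow}S$, and therefore $c_{n;1-\alpha}(\vartheta_n)\to q_{1-\alpha}$ as soon as $q_{1-\alpha}$ is a continuity point of the distribution function $F_S$ of $S$, a property I verify below. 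Since this convergence holds along every deterministic sequence $\vartheta_n\to\vartheta$, a standard reformulation shows it is locally uniform near $\vartheta$: for each $\varepsilon>0$ there are $N$ and $\delta>0$ with $|c_{n;1-\alpha}(\tilde\vartheta)-q_{1-\alpha}|<\varepsilon$ for all $n\ge N$ and all $\tilde\vartheta$ with $|\tilde\vartheta-\vartheta|<\delta$. Combining this with the consistency $\hat\vartheta_n\to\vartheta$ in probability from Lemma \ref{lem2} (the assumptions (C1)--(C4) being in force) gives $P(|c_{n;1-\alpha}(\hat\vartheta_n)-q_{1-\alpha}|\ge\varepsilon)\le P(|\hat\vartheta_n-\vartheta|\ge\delta)\to 0$, that is $c_{n;1-\alpha}\overset{}{\longrightarrow}q_{1-\alpha}$ in probability; the identical reasoning yields $d_{n;1-\alpha}\to r_{1-\alpha}$ in probability.

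It then remains to verify the required continuity of the limiting distribution functions, and this is exactly where the nondegeneracy assumption enters and where the ``or'' in the statement originates. If the diagonal of (\ref{cov3}) does not vanish everywhere, then $U(\cdot,\vartheta)$ is a nonzero centered Gaussian process with a.s.\ bounded, $\rho$-uniformly continuous paths, so the seminorm $S$ is nondegenerate and $F_S$ is continuous on $(0,\infty)$, in particular at $q_{1-\alpha}>0$. If instead the diagonal does not vanish $G(\cdot,\vartheta)$-almost everywhere, then the covariance operator of $U(\cdot,\vartheta)$ on $L^2\big(G(\cdot,\vartheta)\big)$ has a strictly positive eigenvalue, whence $Q$ has an absolutely continuous distribution by its Karhunen--Lo\`eve representation as a weighted sum of independent $\chi^2_1$ variables, so the distribution function of $Q$ is continuous at $r_{1-\alpha}$.

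Finally I would conclude by Slutsky's lemma. In the first case $\KS_n\overset{\mathrm d}{\longrightarrow}S$ together with $c_{n;1-\alpha}\to q_{1-\alpha}$ in probability gives $\KS_n-c_{n;1-\alpha}\overset{\mathrm d}{\longrightarrow}S-q_{1-\alpha}$, and since $0$ is a continuity point of the law of $S-q_{1-\alpha}$ we obtain $P(\KS_n\ge c_{n;1-\alpha})\to P(S\ge q_{1-\alpha})=\alpha$, the last equality using continuity of $F_S$ at $q_{1-\alpha}$. In the second case the analogous computation for $\CvM_n$ and $d_{n;1-\alpha}$ yields $P(\CvM_n\ge d_{n;1-\alpha})\to\alpha$, which establishes the claim. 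I expect the main obstacle to be precisely the passage from deterministic sequences $\vartheta_n\to\vartheta$ to the random estimator $\hat\vartheta_n$; the local uniformity of the quantile convergence extracted from Corollary \ref{cor6}, married to the consistency of $\hat\vartheta_n$ in Lemma \ref{lem2}, is what is designed to surmount it.
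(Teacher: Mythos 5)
Your proposal follows essentially the same route as the paper's proof: the matching assumptions $v^{()}(\vartheta)=v(\vartheta)$ and $w^{()}(x,\vartheta)=w(x,\vartheta)$ are used to identify the covariance functions (\ref{cov3}) and (\ref{cov4}) and hence the laws of the two limit processes; the random critical values are shown to converge in probability by combining Corollary \ref{cor6} (applied along deterministic sequences $\vartheta_n\to\vartheta$) with the consistency of $\hat\vartheta_n$ from Lemma \ref{lem2}; and the conclusion is reached by a Slutsky argument against Corollary \ref{cor5}. Your explicit local-uniformity argument for passing from deterministic sequences to the random estimator is exactly what the paper leaves implicit behind its citation ``see Corollary \ref{cor6} and Lemma \ref{lem2}.''

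One imprecision is worth correcting. You assert that $c_{n;1-\alpha}(\vartheta_n)\to q_{1-\alpha}$ ``as soon as $q_{1-\alpha}$ is a continuity point of the distribution function $F_S$.'' Continuity at the quantile is not sufficient for convergence of quantiles under convergence in distribution: if $F_S$ were flat at level $1-\alpha$, the quantiles of the approximating laws could oscillate across the flat stretch while the laws still converge weakly. What is needed, and what the paper asserts (tracing back, via the proof of Corollary \ref{cor3}, to the arguments in \cite{bar}), is that the limiting distribution function is \emph{strictly increasing} on the non-negative half-line. Your own nondegeneracy arguments can be upgraded to give this with little extra work --- the Karhunen--Lo\`eve representation with at least one strictly positive eigenvalue yields a density for $Q$ that is positive on all of $(0,\infty)$, and the corresponding facts for suprema of nondegenerate Gaussian processes give strict increase of $F_S$ --- but as written, continuity is the wrong property to verify at this step; it is only sufficient for the final Slutsky step and for the identity $P(S\ge q_{1-\alpha})=\alpha$, not for the convergence of the Monte-Carlo quantiles themselves.
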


\subsection{Limit results unter alternatives}

In order to establish limit results under alternatives, the weak convergence in probability of the parameter estimator is still required. Consider the following condition.
\begin{itemize}
\item[(C7)] There exists a $\vartheta\in\Theta$ with $\lim_{n\rightarrow\infty}\hat \vartheta_n=\vartheta$ in probability.
\end{itemize}
\begin{thm} \label{thm6} Assume (C7) and  the conditions (C1) and (C2) hold for the parameter $\vartheta$ given by (C7). Then,
\begin{equation*}
\forall\varepsilon >0:P\bigg(\frac{1}{\sqrt n}\KS_n\ge\sup_{x\in\overline\R^m}|{ F}(x)- G(x,\vartheta)|-\varepsilon+o_p(1)\bigg)\longrightarrow 1~\text{as}~n\rightarrow\infty,
\end{equation*}
and
\begin{equation*}
\forall \varepsilon >0:P\bigg(\frac 1 n \CvM_n\ge \int\big({ F}(x)- G(x,\vartheta)\big)^2   G(\mathrm d x,\vartheta)-\varepsilon+o_p(1)\bigg)\longrightarrow 1~\text{as}~n\rightarrow\infty.
\end{equation*}
\end{thm}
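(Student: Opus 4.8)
The plan is to follow the same route as in the proof of Theorem \ref{thm3}, with the additional work of controlling the estimated parameter $\hat\vartheta_n$ inside $G_n(\cdot,\hat\vartheta_n)$. Writing $\hat{\mathbb F}_n$ for the weighted empirical distribution function (\ref{empmix}), one has $\frac{1}{\sqrt n}\KS_n=\sup_{x\in\overline\R^m}|\hat{\mathbb F}_n(x)-G_n(x,\hat\vartheta_n)|$ and $\frac1n\CvM_n=\int(\hat{\mathbb F}_n(x)-G_n(x,\hat\vartheta_n))^2\,G_n(\mathrm dx,\hat\vartheta_n)$. The whole argument rests on two uniform-in-probability approximations, $\hat{\mathbb F}_n\to F$ and $G_n(\cdot,\hat\vartheta_n)\to G(\cdot,\vartheta)$, both uniformly on $\overline\R^m$ and in probability. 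Granting these, the two claims drop out by a supremum lower bound and a Helly--Bray argument, respectively.

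For the first approximation I would split $\hat{\mathbb F}_n-F=(\hat{\mathbb F}_n-\mathbb F_n)+(\mathbb F_n-F)$. Since $F_i\to F$ uniformly and $\sum_i\alpha_{i,n}=1$ with $\max_i\alpha_{i,n}\to0$, a standard weight-splitting estimate gives $\sup_x|\mathbb F_n(x)-F(x)|\to0$. For the stochastic part, Lemma \ref{lem0} (applied with $\xi_{i,n}=X_i$, $K_{i,n}=F_i$, $K=F$) furnishes asymptotic equicontinuity of $\mathbb W_n=\sqrt n(\hat{\mathbb F}_n-\mathbb F_n)$; together with convergence of the finite-dimensional marginals, this makes $\mathbb W_n$ tight in $\ell^\infty(\overline\R^m)$, so $\sup_x|\hat{\mathbb F}_n-\mathbb F_n|=\|\mathbb W_n\|/\sqrt n=O_p(1/\sqrt n)=o_p(1)$. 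Note this holds under alternatives as well, since the centering here is at the true mean $\mathbb F_n$. For the second approximation I would decompose $G_n(x,\hat\vartheta_n)-G(x,\vartheta)=[G_n(x,\hat\vartheta_n)-G_n(x,\vartheta)]+[G_n(x,\vartheta)-G(x,\vartheta)]$. The second bracket vanishes uniformly by the standing assumption $G_n(\cdot,\vartheta)\to G(\cdot,\vartheta)$. For the first bracket, (C7) gives $\hat\vartheta_n\to\vartheta$ in probability, so $\hat\vartheta_n$ lies in a closed ball $\bar B\subset U_\vartheta$ with probability tending to one; on that event (C1) permits a mean value step along the segment from $\vartheta$ to $\hat\vartheta_n$, yielding $G_n(x,\hat\vartheta_n)-G_n(x,\vartheta)=g_n(x,\bar\vartheta_{n,x})'(\hat\vartheta_n-\vartheta)$ for some intermediate $\bar\vartheta_{n,x}\in\bar B$. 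By (C2), $g_n\to g$ uniformly and $g$ is bounded on $\overline\R^m\times\bar B$ (uniform continuity plus boundedness of $g(\cdot,\vartheta)$), so $\sup_x\sup_{\vartheta'\in\bar B}\|g_n(x,\vartheta')\|\le M$ for large $n$, whence $\sup_x|G_n(x,\hat\vartheta_n)-G_n(x,\vartheta)|\le M\|\hat\vartheta_n-\vartheta\|=o_p(1)$.

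With both approximations in hand, the Kolmogorov--Smirnov claim is immediate: given $\varepsilon>0$ pick $x_0$ with $|F(x_0)-G(x_0,\vartheta)|>\sup_x|F(x)-G(x,\vartheta)|-\varepsilon$, and bound $\frac{1}{\sqrt n}\KS_n\ge|\hat{\mathbb F}_n(x_0)-G_n(x_0,\hat\vartheta_n)|=|F(x_0)-G(x_0,\vartheta)|+o_p(1)$. For the Cram\'er--von Mises claim, the integrand $(\hat{\mathbb F}_n-G_n(\cdot,\hat\vartheta_n))^2$ converges uniformly in probability to $(F-G(\cdot,\vartheta))^2$, while the integrating distribution functions $G_n(\cdot,\hat\vartheta_n)$ converge to the uniformly continuous $G(\cdot,\vartheta)$, so $\frac1n\CvM_n\to\int(F-G(\cdot,\vartheta))^2\,G(\mathrm dx,\vartheta)$ in probability.

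I expect the main obstacle to be this last step: the integrating measure is itself random through $\hat\vartheta_n$, so the convergence of $\int\phi_n\,\mathrm dG_n(\cdot,\hat\vartheta_n)$ must be established by combining the uniform convergence of the integrands $\phi_n$ with a Helly--Bray estimate for the random distribution functions $G_n(\cdot,\hat\vartheta_n)$, the uniform boundedness of $g_n$ from (C2) again being the decisive ingredient that converts consistency of $\hat\vartheta_n$ into uniform control of the measures.
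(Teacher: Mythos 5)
Your proposal is correct and takes essentially the same route as the paper: the paper proves Theorem \ref{thm6} by rerunning the proof of Theorem \ref{thm3} with $G_n$ replaced by $G_n(\cdot,\hat\vartheta_n)$, where the required uniform-in-probability convergence $\sup_{x\in\overline\R^m}|G_n(x,\hat\vartheta_n)-G(x,\vartheta)|\rightarrow 0$ is exactly the content of Lemma \ref{lem4} (whose proof is your (C1)/(C2) Taylor-expansion-plus-triangle-inequality step) combined with (C7). The only minor variation is in the Cram\'er--von-Mises part, which the paper controls through a Cauchy--Schwarz decomposition of the integral whereas you use uniform convergence of the integrand plus a Helly--Bray argument for the random integrating measure; the two devices are interchangeable here.
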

The following corollary is useful for consistency considerations  with respect to the test (\ref{Test2}).
\begin{cor}\label{cor8}
Assume (C7) and suppose that the conditions (C1) and (C2) hold for the parameter $\vartheta$ given by (C7). Moreover, assume  for $\vartheta$ given by (C7), (C5) and (C6) hold for arbitrary sequences $(\vartheta_n)_{n\in\N}$ with $\vartheta_n\in\Theta$ for all $n\in\N$ and $\lim_{n\rightarrow\infty}\vartheta_n=\vartheta$. Suppose $ F-  G(\cdot,\vartheta)$ does not vanish everywhere or $ G(\cdot,\vartheta)$-almost everywhere and  the restriction of the covariance function (\ref{cov4}) to the diagonal of $\overline \R^m\times\overline \R^m$ does not vanish  everywhere or $ G(\cdot,\vartheta)$-almost everywhere. Then,
\begin{equation*} 
\lim_{n\rightarrow\infty}P(\KS_n\ge c_{n;1-\alpha})=1~\text{or}~\lim_{n\rightarrow\infty}P(\CvM_n\ge d_{n;1-\alpha})=1.
\end{equation*}
\end{cor}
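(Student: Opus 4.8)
The plan is to show that under the alternative the test statistic diverges while the critical value stays bounded, so that the rejection probability tends to one. I treat the $\KS_n$-test and the $\CvM_n$-test separately: the ``does not vanish everywhere'' conditions drive the $\KS_n$-test and the ``$G(\cdot,\vartheta)$-almost everywhere'' conditions drive the $\CvM_n$-test, and since the hypotheses guarantee that at least one of these two branches is in force, proving each branch in isolation yields the disjunction in the conclusion. Throughout, $\vartheta$ denotes the parameter supplied by (C7).

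First I would establish divergence of the statistic. Conditions (C7), (C1) and (C2) hold for $\vartheta$, so Theorem \ref{thm6} is available. On the ``everywhere'' branch, $\delta:=\sup_{x\in\overline\R^m}|F(x)-G(x,\vartheta)|>0$; taking $\varepsilon=\delta/2$ in the first assertion of Theorem \ref{thm6} gives $\KS_n\ge\sqrt n\,\delta/2+o_p(\sqrt n)$ with probability tending to one, so $\KS_n\to\infty$ in probability. On the ``almost everywhere'' branch, $\delta':=\int(F(x)-G(x,\vartheta))^2\,G(\mathrm d x,\vartheta)>0$, and the second assertion of Theorem \ref{thm6} with $\varepsilon=\delta'/2$ forces $\CvM_n\to\infty$ in probability.

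Next I would control the critical values. By hypothesis (C1), (C2), (C5) and (C6) hold along every deterministic sequence $\vartheta_n\to\vartheta$, so Corollary \ref{cor6} gives $\KS_n^{(n)}\overset{\mathrm d}{\longrightarrow}\sup_{x\in\overline\R^m}|U^{()}(x,\vartheta)|$ and $\CvM_n^{(n)}\overset{\mathrm d}{\longrightarrow}\int(U^{()}(x,\vartheta))^2\,G(\mathrm d x,\vartheta)$ along such a sequence. The non-degeneracy assumptions enter precisely here. On the ``everywhere'' branch there is an $x_0$ with $c^{()}(x_0,x_0,\vartheta)>0$, so the centered Gaussian limit has a genuinely non-degenerate coordinate and $\sup_x|U^{()}(x,\vartheta)|$ carries no atom on $(0,\infty)$; hence its $(1-\alpha)$-quantile $c_{1-\alpha}$ is finite, positive and a continuity point of the limiting distribution function. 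On the ``almost everywhere'' branch $\int c^{()}(x,x,\vartheta)\,G(\mathrm d x,\vartheta)=\E\int(U^{()}(x,\vartheta))^2\,G(\mathrm d x,\vartheta)>0$, so the limiting quadratic functional is non-degenerate with $(1-\alpha)$-quantile $d_{1-\alpha}\in(0,\infty)$ a continuity point. Convergence in distribution to a limit that is continuous at its $(1-\alpha)$-quantile upgrades to convergence of the $(1-\alpha)$-quantiles, giving $c_{n;1-\alpha}(\vartheta_n)\to c_{1-\alpha}$ respectively $d_{n;1-\alpha}(\vartheta_n)\to d_{1-\alpha}$ for every deterministic $\vartheta_n\to\vartheta$.

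The step I expect to be the main obstacle is passing from deterministic sequences to the actual data-dependent critical values, which are quantiles of the Monte-Carlo law based on $G_n(\cdot,\hat\vartheta_n)$ with $\hat\vartheta_n$ random. I would resolve this by the subsequence principle: (C7) gives $\hat\vartheta_n\to\vartheta$ in probability, so every subsequence has a further subsequence along which $\hat\vartheta_n\to\vartheta$ almost surely; freezing such an $\omega$ makes $(\hat\vartheta_n(\omega))_n$ a deterministic sequence converging to $\vartheta$, to which the previous paragraph applies and yields $c_{n;1-\alpha}(\hat\vartheta_n(\omega))\to c_{1-\alpha}$, respectively $d_{n;1-\alpha}(\hat\vartheta_n(\omega))\to d_{1-\alpha}$. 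As every subsequence admits such a sub-subsequence, $c_{n;1-\alpha}\to c_{1-\alpha}$ and $d_{n;1-\alpha}\to d_{1-\alpha}$ in probability, in particular both are $O_p(1)$. Combining the ingredients finishes the proof: on the ``everywhere'' branch, $\KS_n\to\infty$ in probability while $c_{n;1-\alpha}=O_p(1)$, so $P(\KS_n\ge c_{n;1-\alpha})\to 1$; on the ``almost everywhere'' branch, $\CvM_n\to\infty$ while $d_{n;1-\alpha}=O_p(1)$, so $P(\CvM_n\ge d_{n;1-\alpha})\to 1$. In either case the asserted disjunction holds.
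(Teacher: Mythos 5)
Your proposal is correct and follows essentially the same route as the paper: divergence of the statistic via Theorem \ref{thm6} on the appropriate branch, convergence in probability of the Monte-Carlo critical values via Corollary \ref{cor6} together with (C7) and the non-degeneracy of the covariance function (\ref{cov4}), and then the combination step as in the Proof of Corollary \ref{cor4}. Your subsequence argument for passing from deterministic sequences $\vartheta_n\rightarrow\vartheta$ to the random $\hat\vartheta_n$ simply makes explicit what the paper compresses into ``see Corollary \ref{cor6} and regard (C7).''
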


\section{Testing hypotheses formulated by Hadamard differentiable functionals}\label{had}
Now, consider a general two sample situation. All results can be simply extended to more than two samples or modified to the one sample case. Regard the definitions at the beginning of Section \ref{simplegood1}. The second sample comes from a sequence of independent but not necessarily identically distributed $\R^s$-valued random vectors $V_1,V_2,\dots$, $s\in\N$. Assume that $X_1,X_2,\dots$ and $V_1,V_2,\dots$ are independent. For $i\in\N$, let $G_i$ be the distribution function of $V_i$ defined on $\overline \R^s$ and let $G$ be an uniformly continuous distribution function defined on $\overline \R^s$ with
\begin{equation*}
\lim_{i\rightarrow\infty}G_i=G~\text{uniformly on}~\overline\R^s.
\end{equation*}
In addition, let $r\in\N$ be a second sample size and assume the sample sizes increasing such that
\begin{equation}\label{samplesize}
\lim_{\min N\rightarrow\infty}\frac{n}{r}=\eta\in(0,\infty),
\end{equation}
where $N:=\lbrace n,r \rbrace$. Furthermore,  let $\beta_{1,r},\dots,\beta_{r,r}$ be some additional weights, in particular real numbers, with 
\begin{equation}\label{seq2}
\beta_{i,r}\ge 0,~i=1,\dots,r,~\sum_{i=1}^r\beta_{i,r}=1,~\lim_{r\rightarrow\infty}\sqrt r\max_{1\le i\le r}\beta_{i,r}=0,~\lim_{r\rightarrow\infty}r\sum_{i=1}^r\beta_{i,r}^2=\tau\in[1,\infty).
\end{equation}
Define the mixture distribution 
\begin{equation*}
\mathbb G_r(v):=\sum_{i=1}^r \beta_{i,r}G_i(v),~v\in\overline\R^s,
\end{equation*}
based on the distribution functions $G_1,\dots,G_r$ of $V_1,\dots,V_r$ and the weighted empirical distribution function
\begin{equation*}
\hat{\mathbb G}_r(v):=\sum_{i=1}^n\beta_{i,r}\I(V_i\le v),~v\in\overline\R^s,
\end{equation*}
based on $V_1,\dots,V_r$

\medskip
The aim is to  formulate a general hypothesis  with the help of two functionals. Therefor, denote by $\mathcal D(\overline \R^l)$ the set of all distribution functions defined on $\overline \R^{l}$, $l\in\N$. Consider two maps  $T:\mathcal D(\overline \R^{m})\times\mathcal D(\overline \R^{s})\rightarrow \mathcal D(\overline \R^u)$, $Q:\mathcal D(\overline \R^{m})\times\mathcal D(\overline \R^{s})\rightarrow \mathcal D(\overline \R^u)$, $u\in\N$, and assume this maps are continuous with respect to the supremum metrics on $ \mathcal D(\overline \R^u)$ and $\mathcal D(\overline \R^{m})\times\mathcal D(\overline \R^{s})$. Suppose  $F_1,\dots,F_n$ and  $G_1,\dots,G_r$  are unknown, the maps $T$ and $Q$ as well as $\alpha_{1,n},\dots,\alpha_{n,n}$ and $\beta_{1,r},\dots,\beta_{r,r}$ are known and that the user has to verify the hypothesis
\begin{equation}\label{testprallg}
\mathrm H_N:T(\mathbb F_n,\mathbb G_r)=Q(\mathbb F_n,\mathbb G_r)
\end{equation}
on the basis of the observations $X_1,\dots,X_n$ and $V_1,\dots,V_r$.

\medskip
Another functional is required for the construction of the testing procedure. Let $ j:\mathcal D(\overline \R^{m})\times\mathcal D(\overline \R^{s})\rightarrow \mathcal D(\overline \R^{m})\times\mathcal D(\overline \R^{s})$ be a continuous map with respect to the supremum metric on  $\mathcal D(\overline \R^{m})\times\mathcal D(\overline \R^{s})$ such that
\begin{equation*}
T\circ j=Q\circ j
\end{equation*}
and
\begin{equation*}
j( K)= K~\text{for all}~ K\in \mathcal D(\overline \R^{m})\times\mathcal D(\overline \R^{s})~\text{with}~T( K)=Q( K).
\end{equation*}
The map $j$ could be a projection on the subspace of $\mathcal D(\overline \R^{m})\times\mathcal D(\overline \R^{s})$ given by the hypothesis. Define $\hat {\mathbb J}_{N}:=j(\hat {\mathbb F}_n,\hat {\mathbb G}_r)$ and write $\hat {\mathbb J}_{N}=(\hat {\mathbb J}_{N,m},\hat {\mathbb J}_{N,s})$, where $\hat {\mathbb J}_{N,m}$ has realizations in $\mathcal D(\overline \R^m)$ and $\hat {\mathbb J}_{N,s}$ has realizations in $\mathcal D(\overline \R^s)$. Put $ J:=j( F,G)$ and write $ J=(J_m, J_s)$, where $ J_m\in \mathcal D(\overline \R^{m})$ and $ J_s\in \mathcal D(\overline \R^{s})$.

\medskip
Define a process $\mathbb U_N:=(U_N(z);z\in\overline\R^u)$ by
\begin{equation}\label{procallg}
U_N(z):=\sqrt[4]{nr} \big(T(\hat {\mathbb F}_n,\hat {\mathbb G}_r)(z)-Q(\hat {\mathbb F}_n,\hat {\mathbb G}_r)(z)\big),~z\in\overline\R^u,
\end{equation}
and consider the Kolmogorov-Smirnov type statistic
\begin{equation}\label{KSallg}
\KS_N:=\sup_{z\in\overline\R^u}|U_N(z)|=\sqrt[4]{nr}\sup_{z\in\overline\R^u}\big| T(\hat {\mathbb F}_n,\hat {\mathbb G}_r)(z)-Q(\hat {\mathbb F}_n,\hat {\mathbb G}_r)(z)\big|
\end{equation}
as well as the Cram\'er-von-Mises type statistic 
\begin{equation}\label{CvMallg}
\begin{split}
\CvM_N:&=\int U^2_N(z) T( \hat {\mathbb J}_N)(\mathrm d z)=\int U^2_N(z) Q(\hat {\mathbb J}_N)(\mathrm d z)\\
&=\sqrt{nr}\int\big( T(\hat {\mathbb F}_n,\hat {\mathbb G}_r)(z)-Q(\hat {\mathbb F}_n,\hat {\mathbb G}_r)(z)\big)^2 T(\hat {\mathbb J}_N)(\mathrm d z).
\end{split}
\end{equation}
For simplicity, assume  $T(\hat {\mathbb F}_n,\hat {\mathbb G}_r)$, $Q(\hat {\mathbb F}_n,\hat {\mathbb G}_r)$ and $T(  \hat {\mathbb J}_N)$ are step functions with countable vertices. This guarantees measurability of suprema and integrals in this section. 

\medskip
In general, the statistics  (\ref{KSallg}) and (\ref{CvMallg}) are not distribution free, neither in the case of  $T(\mathbb F_n,\mathbb G_r)=Q(\mathbb F_n,\mathbb G_r)$.  In order to approximate the distribution of the statistics $\KS_N$ and $\CvM_N$  if  $T(\mathbb F_n,\mathbb G_r)=Q(\mathbb F_n,\mathbb G_r)$, a Monte-Carlo procedure is suggested. Therefor, simulate independently observations with joint distribution function
\begin{equation*}
\begin{split}
&\big((x_1,\dots,x_n),(v_1,\dots,v_r)\big)\longmapsto\prod_{i=1}^{n} \hat {\mathbb J}_{N,m}(x_i)\prod_{i=1}^{r} \hat {\mathbb J}_{N,s}(v_i),\\
&\big((x_1,\dots,x_n),(v_1,\dots,v_r)\big)\in \times_{i=1}^{n}\overline\R^{m}\times \times_{i=1}^{r}\overline\R^{s}.
\end{split}
\end{equation*}
Determine a significance level  $\alpha\in (0,1)$. Denote by $c_{N;1-\alpha}$ the $(1-\alpha)$-quantile of the distribution of  $\KS_N$ and by $d_{N;1-\alpha}$ the $(1-\alpha)$-quantile of the distribution of  $\CvM_N$ if $(F_1,\dots,F_{n})=( \hat {\mathbb J}_{N,m},\dots,\hat {\mathbb J}_{N,m})$ and $(G_1,\dots,G_{r}) =(\hat {\mathbb J}_{N,s},\dots,\hat {\mathbb J}_{N,s})$. The calculation procedure for practice is described above. Then, testing procedure
\begin{equation}\label{Testallg}
\text{``Reject}~\mathrm H_N,~\text{iff}~\KS_N\ge c_{N;1-\alpha}\text{''}~\text{or testing procedure}~\text{``Reject}~\mathrm H_N,~\text{iff}~\CvM_N\ge d_{N;1-\alpha}\text{''} 
\end{equation}
is suggested. In the case of  $\alpha_{i,n}=\frac 1 {n}$, $i=1,\dots,n$, and $\beta_{i,r}=\frac 1 {r}$, $i=1,\dots,r$, testing procedure (\ref{Testallg}),  in particular the statistic (\ref{KSallg}) or (\ref{CvMallg}), is invariant under transformation of the data from the set $\mathcal T_{n,m}\times \mathcal T_{r,s}$, where $\mathcal T_{n,m}$ and $\mathcal T_{r,s}$ are defined analogous to (\ref{traf}).

\subsection{Limit results under the hypothesis}

\medskip
The asymptotic results in this section based on a smoothness condition on the functionals $T$ and $Q$. Call a map $T:\mathcal D(\overline \R^{m})\times\mathcal D(\overline \R^{s})\rightarrow \mathcal D(\overline \R^u)$ uniformly Hadamard differentiable in $  K\in \mathcal D(\overline \R^{m})\times\mathcal D(\overline \R^{s})$, iff there exists a  linear map $\mathrm d T(  K): \ell^\infty(\overline \R^{m})\times \ell^\infty(\overline \R^{s})\rightarrow  \ell^\infty(\overline \R^u)$ such that $\mathrm d T(  K)$ is continuous with respect to the supremum metrics on $ \ell^\infty(\overline \R^u)$ and $ \ell^\infty(\overline \R^{m})\times \ell^\infty(\overline \R^{s})$ and 
\begin{equation*}
\lim_{n\rightarrow\infty}\frac{1}{t_n}\big(T(K_n+t_nL_n)-T(K_n)\big)=\mathrm d T(  K  )( L)~\text{uniformly on}~\overline \R^u
\end{equation*}
 for all sequences $(K_n)_{n\in\N}$  with $K_n\in\mathcal D(\overline \R^{m})\times\mathcal D(\overline \R^{s})$  for all $n\in\N$ and $\lim_{n\rightarrow\infty}K_n=  K$ uniformly on $\overline \R^{m}\times\overline\R^s$, for all sequences  $(L_n)_{n\in\N}$ with $L_n\in \ell^\infty(\overline \R^{m})\times \ell^\infty(\overline \R^{s})$ for all $n\in\N$ and the existence of a $  L\in\ell^\infty(\overline \R^{m})\times \ell^\infty(\overline \R^{s})$ with $\lim_{n\rightarrow\infty}L_n=  L$  uniformly on $\overline \R^{m}\times \overline \R^{s}$ and all sequences $(t_n)_{n\in\N}$  with $t_n\in(0,\infty)$ for all $n\in\N$ and $\lim_{n\rightarrow\infty}t_n=0$, where $(K_n+t_nL_n)\in\mathcal D(\overline \R^{m})\times\mathcal D(\overline \R^{s})$  for all $n\in\N$. $\mathrm d T(  K)$ is called Hadamard derivative in $  K$. Concepts of  differentiability of functionals  and applications in statistics are discussed by v. Mises \cite{Mis}, Shapiro \cite{sha}, van der Vaart \cite {van2} and Ren and Sen \cite{ren3}, \cite{ren}, \cite{ren2}.

\medskip
In order to determine the limit distribution of the process  $\mathbb U_N$ under the hypothesis, let $\mathbb M_m:=(M_m(x);x\in\overline\R^{m})$ be a Gaussian process with expectation function identically equal to zero, covariance function
\begin{equation*}
c_m(x,y):=\kappa\eta\Big( J_m\big(\min(x,y)\big)-  J_m(x)  J_m(y)\Big),~x,y\in\overline\R^{m},
\end{equation*}
and a.s. uniformly continuous sample paths with respect to the metric $\rho$ on $\overline\R^{m}$  defined in (\ref{metr}). In addition, let $\mathbb M_s:=(M_s(v);v\in\overline\R^{s})$ be a Gaussian process with expectation function identically equal to zero, covariance function
\begin{equation*}
c_s(v,w):=\tau\eta\Big( J_s\big(\min(v,w)\big)-  J_s(v)  J_s(w)\Big),~v,w\in\overline\R^{s},
\end{equation*}
and a.s. uniformly continuous sample paths with respect to a metric on $\overline\R^{s}$  defined analogous to the metric $\rho$ in (\ref{metr}).
Let $\mathbb M_m$ and $\mathbb M_s$ be independent and put $\mathbb M:=(\mathbb M_m,\mathbb M_s)$ and $M=(M_m,M_s)$. Finally, let $\mathbb U:=(U(z);z\in\overline \R^u)$ be a process defined by
\begin{equation*}
U(z):=\mathrm d T( J)( M)(z)-\mathrm d Q( J)( M)(z),~z\in\overline \R^u,
\end{equation*} 
if $T$ and $Q$ are uniformly Hadamard differentiable in $ J$ with Hadamard derivative $\mathrm d T( J)$ and $\mathrm d Q( J)$, respectively, 
\begin{thm}\label{thmallg1} Assume $T(\mathbb F_n,\mathbb G_r)=Q(\mathbb F_n,\mathbb G_r)$ for $\min N$ sufficiently large. Suppose  $T$ and $Q$ are uniformly Hadamard differentiable in $(F,G)$ with Hadamard derivative $\mathrm d T( F,G)$ and $\mathrm d Q( F,G)$, respectively. Then, $ J=(F,G)$ and
\begin{equation*}
\mathbb U_N\overset{\mathrm d}{\longrightarrow} \mathbb U~\text{as}~\min N\rightarrow\infty.
\end{equation*}
\end{thm}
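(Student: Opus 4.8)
The plan is to reduce the statement to a joint functional central limit theorem for the two weighted empirical processes, followed by an application of the functional delta method supplied by uniform Hadamard differentiability, all within the $\ell^\infty$ weak-convergence framework of \cite{van}. First I would verify the deterministic identity $J=(F,G)$. Because $F_i\to F$ and $G_i\to G$ uniformly while the weights are nonnegative, sum to one, and satisfy $\max_{1\le i\le n}\alpha_{i,n}\to 0$ and $\max_{1\le i\le r}\beta_{i,r}\to 0$ by (\ref{seq}) and (\ref{seq2}), splitting $\mathbb F_n=\sum_{i=1}^n\alpha_{i,n}F_i$ and $\mathbb G_r=\sum_{i=1}^r\beta_{i,r}G_i$ into finitely many leading indices and a uniformly small tail yields $\mathbb F_n\to F$ and $\mathbb G_r\to G$ uniformly. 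The hypothesis $T(\mathbb F_n,\mathbb G_r)=Q(\mathbb F_n,\mathbb G_r)$ together with continuity of $T$ and $Q$ then gives $T(F,G)=Q(F,G)$ in the limit, and the defining property that $j(K)=K$ whenever $T(K)=Q(K)$ forces $J=j(F,G)=(F,G)$.

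Second, I would establish the joint weak convergence of the centered and rescaled empirical pair. Put $t_N:=(nr)^{-1/4}$ and $\mathbb L_N:=t_N^{-1}(\hat{\mathbb F}_n-\mathbb F_n,\hat{\mathbb G}_r-\mathbb G_r)$, noting that $\sqrt[4]{nr}(\hat{\mathbb F}_n-\mathbb F_n)=(r/n)^{1/4}\sqrt n(\hat{\mathbb F}_n-\mathbb F_n)$ and $\sqrt[4]{nr}(\hat{\mathbb G}_r-\mathbb G_r)=(n/r)^{1/4}\sqrt r(\hat{\mathbb G}_r-\mathbb G_r)$. The convergence of each of $\sqrt n(\hat{\mathbb F}_n-\mathbb F_n)$ and $\sqrt r(\hat{\mathbb G}_r-\mathbb G_r)$ follows exactly as in Theorem \ref{thm1}, via asymptotic equicontinuity from Lemma \ref{lem0} and a finite-dimensional central limit theorem under Lindeberg's condition. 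Independence of $X_1,X_2,\dots$ and $V_1,V_2,\dots$ makes the joint limit have independent coordinates, while the deterministic factors $(r/n)^{1/4}\to\eta^{-1/4}$ and $(n/r)^{1/4}\to\eta^{1/4}$, combined with $n\sum_{i=1}^n\alpha_{i,n}^2\to\kappa$ and $r\sum_{i=1}^r\beta_{i,r}^2\to\tau$, yield the covariance functions of $M_m$ and $M_s$. Hence $\mathbb L_N\overset{\mathrm d}{\longrightarrow}M=(M_m,M_s)$ in $\ell^\infty(\overline\R^m)\times\ell^\infty(\overline\R^s)$.

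Third, I would run the delta method. Writing $(\hat{\mathbb F}_n,\hat{\mathbb G}_r)=(\mathbb F_n,\mathbb G_r)+t_N\mathbb L_N$ with $(\mathbb F_n,\mathbb G_r)\to J$ uniformly, $t_N\downarrow 0$, and $\mathbb L_N\overset{\mathrm d}{\longrightarrow}M$, I would pass to an almost surely convergent version of $\mathbb L_N$ by the almost-sure representation theorem; since $(\mathbb F_n,\mathbb G_r)$ is a deterministic sequence, only $\mathbb L_N$ needs representing. Along each realization the uniform Hadamard differentiability of $T$ at $J$ applies to the deterministic base sequence $K_N=(\mathbb F_n,\mathbb G_r)\to J$ and the realized increments, giving $\sqrt[4]{nr}(T(\hat{\mathbb F}_n,\hat{\mathbb G}_r)-T(\mathbb F_n,\mathbb G_r))\overset{\mathrm d}{\longrightarrow}\mathrm d T(J)(M)$, and likewise $\sqrt[4]{nr}(Q(\hat{\mathbb F}_n,\hat{\mathbb G}_r)-Q(\mathbb F_n,\mathbb G_r))\overset{\mathrm d}{\longrightarrow}\mathrm d Q(J)(M)$. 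Finally the hypothesis $T(\mathbb F_n,\mathbb G_r)=Q(\mathbb F_n,\mathbb G_r)$ cancels the base-point values, so that $U_N=\sqrt[4]{nr}[(T(\hat{\mathbb F}_n,\hat{\mathbb G}_r)-T(\mathbb F_n,\mathbb G_r))-(Q(\hat{\mathbb F}_n,\hat{\mathbb G}_r)-Q(\mathbb F_n,\mathbb G_r))]\overset{\mathrm d}{\longrightarrow}\mathrm d T(J)(M)-\mathrm d Q(J)(M)=U$, using linearity and continuity of the Hadamard derivatives.

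I expect the main obstacle to be this last step: an ordinary fixed-point Hadamard derivative does not suffice, because the centering $(\mathbb F_n,\mathbb G_r)$ drifts to $J$ and the increment $\mathbb L_N$ is random and only convergent in distribution. This is precisely what the uniform notion along sequences $K_n\to K$, $L_n\to L$, $t_n\to 0$ is designed to handle; transferring it from deterministic to distributionally convergent inputs via the almost-sure representation, and checking throughout that $(\mathbb F_n,\mathbb G_r)+t_N\mathbb L_N$ remains in $\mathcal D(\overline\R^m)\times\mathcal D(\overline\R^s)$ so the definition is applicable, is the delicate part. By comparison, the joint asymptotic equicontinuity and the Lindeberg condition for the rescaled bivariate process are routine given Lemma \ref{lem0}.
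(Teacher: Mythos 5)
Your proposal is correct and follows essentially the same route as the paper's proof: identification of $J=(F,G)$ via uniform convergence of $(\mathbb F_n,\mathbb G_r)$ and continuity of $T$, $Q$ and $j$; joint convergence of $\sqrt[4]{nr}\big((\hat{\mathbb F}_n,\hat{\mathbb G}_r)-(\mathbb F_n,\mathbb G_r)\big)$ to $M$ from Theorem \ref{thm1}, independence and (\ref{samplesize}); and passage to an almost surely uniformly convergent version (Theorem 1.10.4 in \cite{van}) so that uniform Hadamard differentiability applies pathwise, with the hypothesis $T(\mathbb F_n,\mathbb G_r)=Q(\mathbb F_n,\mathbb G_r)$ cancelling the base-point values. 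The delicacy you flag at the end is harmless here, since $(\mathbb F_n,\mathbb G_r)+t_N\mathbb L_N=(\hat{\mathbb F}_n,\hat{\mathbb G}_r)$ is itself a pair of distribution functions, exactly as the paper's displayed decomposition of $\mathbb U_N$ exploits.
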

\begin{cor}\label{corallg1}
Assume $T(\mathbb F_n,\mathbb G_r)=Q(\mathbb F_n,\mathbb G_r)$ for $\min N$ sufficiently large. Suppose  $T$ and $Q$ are uniformly Hadamard differentiable in $(F,G)$ with Hadamard derivative $\mathrm d T( F,G)$ and $\mathrm d Q( F,G)$, respectively. Then,
\begin{equation*}
\KS_N\overset{\mathrm d}{\longrightarrow}\sup_{z\in\overline\R^u}|U(z)|~\text{and}~\CvM_N\overset{\mathrm d}{\longrightarrow}\int U^2(z) T(F,G)(\mathrm d z)~\text{as}~\min N\rightarrow\infty.
\end{equation*}
\end{cor}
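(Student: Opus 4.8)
The plan is to derive the corollary from Theorem \ref{thmallg1} by the continuous mapping theorem, treating the Kolmogorov-Smirnov and Cram\'er-von-Mises functionals as (almost) continuous maps on the path space $\ell^\infty(\overline\R^u)$. By Theorem \ref{thmallg1} we already have $\mathbb U_N\overset{\mathrm d}{\longrightarrow}\mathbb U$ as $\min N\rightarrow\infty$, where $\mathbb U$ has sample paths a.s.\ in $U_b(\overline\R^u,\rho_u)$ for the appropriate totally bounded pseudometric; this is the input I will exploit throughout.

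For $\KS_N$ I would argue as follows. The supremum map $\Psi:\ell^\infty(\overline\R^u)\rightarrow[0,\infty)$, $\Psi(f):=\sup_{z\in\overline\R^u}|f(z)|=\|f\|_{\overline\R^u}$, is Lipschitz (hence continuous) with respect to the supremum metric on $\ell^\infty(\overline\R^u)$, since $|\,\|f\|-\|g\|\,|\le\|f-g\|_{\overline\R^u}$. Because $\KS_N=\Psi(\mathbb U_N)$ by the definition in (\ref{KSallg}), the continuous mapping theorem for weak convergence in $\ell^\infty(\overline\R^u)$ (Theorem 1.3.6 in van der Vaart and Wellner \cite{van}) yields $\KS_N=\Psi(\mathbb U_N)\overset{\mathrm d}{\longrightarrow}\Psi(\mathbb U)=\sup_{z\in\overline\R^u}|U(z)|$. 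This part is routine once Theorem \ref{thmallg1} is in hand.

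The $\CvM_N$ part is the main obstacle, because the integrating measure in (\ref{CvMallg}) is the random $T(\hat{\mathbb J}_N)$ rather than the deterministic limit measure $T(F,G)$, so the map being applied is not a fixed continuous functional of $\mathbb U_N$ alone. My plan is to treat the pair $\big(\mathbb U_N, T(\hat{\mathbb J}_N)\big)$ jointly. First I would record that under the hypothesis $J=(F,G)$ (as asserted in Theorem \ref{thmallg1}) and that $\hat{\mathbb J}_N=j(\hat{\mathbb F}_n,\hat{\mathbb G}_r)\rightarrow j(F,G)=J$ uniformly, in probability, because $(\hat{\mathbb F}_n,\hat{\mathbb G}_r)\rightarrow(F,G)$ uniformly in probability (Glivenko-Cantelli for the weighted empirical processes, which follows from the tightness established in Lemma \ref{lem0} together with convergence of means) and $j$ is continuous for the supremum metric. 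Hence by continuity of $T$ the random measure-defining function $T(\hat{\mathbb J}_N)$ converges uniformly in probability to the deterministic $T(J)=T(F,G)$. Combining this with $\mathbb U_N\overset{\mathrm d}{\longrightarrow}\mathbb U$ via Slutsky's lemma in $\ell^\infty(\overline\R^u)\times\ell^\infty(\overline\R^u)$ gives the joint convergence $\big(\mathbb U_N,T(\hat{\mathbb J}_N)\big)\overset{\mathrm d}{\longrightarrow}\big(\mathbb U,T(F,G)\big)$.

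It then remains to show that the integration functional $(f,H)\mapsto\int f^2(z)\,H(\mathrm d z)$ is continuous at the limit point $(\mathbb U,T(F,G))$, so that the continuous mapping theorem applies once more. Here I would split $\int U_N^2\,\mathrm dT(\hat{\mathbb J}_N)-\int U^2\,\mathrm dT(F,G)$ into a term controlled by $\|U_N^2-U^2\|_{\overline\R^u}$ (uniformly small because $U_N^2\to U^2$ uniformly and $T(\hat{\mathbb J}_N)$ is a subprobability/probability measure of total mass bounded by one) and a term of the form $\int U^2\,\mathrm d\big(T(\hat{\mathbb J}_N)-T(F,G)\big)$, which is handled by the uniform convergence of the integrating distribution functions together with the a.s.\ boundedness and $\rho_u$-uniform continuity of the limit paths $U$; integration against a distribution function converging uniformly, of a fixed bounded continuous integrand, converges. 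The delicate point to check is that $U^2$ is, almost surely, a function for which this weak-type convergence of integrals holds; this is guaranteed because $U$ lies in $U_b(\overline\R^u,\rho_u)$, and the relevant test functions are therefore bounded and continuous on the compact $\overline\R^u$. Assembling these estimates yields $\CvM_N\overset{\mathrm d}{\longrightarrow}\int U^2(z)\,T(F,G)(\mathrm d z)$, completing the proof of both assertions of the corollary.
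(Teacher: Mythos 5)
Your proposal is correct and follows essentially the same route as the paper: the paper also derives $\KS_N$ by the continuous mapping theorem from Theorem \ref{thmallg1}, and handles $\CvM_N$ by first establishing $\sup_{z}|T(\hat{\mathbb J}_N)(z)-T(J)(z)|\rightarrow 0$ in probability (its Lemma \ref{lemallg} plays the role of your Glivenko--Cantelli step, obtained there from Corollary \ref{cor1} and Slutsky rather than directly from Lemma \ref{lem0}), noting $J=(F,G)$, and then arguing as in Corollary \ref{cor1}. The splitting of $\int U_N^2\,\mathrm d T(\hat{\mathbb J}_N)-\int U^2\,\mathrm d T(F,G)$ that you carry out explicitly is exactly the detail the paper delegates to the proof of Theorem 3 in Baringhaus and Gaigall \cite{bar}.
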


\medskip
In order to show that testing procedure (\ref{Testallg}) works asymptotically, it is necessary to study the asymptotic distribution of the statistics  (\ref{KSallg}) and (\ref{CvMallg}) if $(F_1,\dots,F_{n})=( \hat {\mathbb J}_{N,m},\dots,\hat {\mathbb J}_{N,m})$ and $(G_1,\dots,G_{r}) =(\hat {\mathbb J}_{N,s},\dots,\hat {\mathbb J}_{N,s})$, too.

\medskip
Well, for an arbitrary $J_N\in  \mathcal D(\overline \R^{m})\times\mathcal D(\overline \R^{s})$   with $T(J_N)=Q(J_N)$ and $\lim_{n\rightarrow\infty}J_N= J$ uniformly on $ \mathcal D(\overline \R^{m})\times\mathcal D(\overline \R^{s})$, $J_N=(J_{N,m},J_{N,s})$, $J_{N,m}\in \mathcal D(\overline \R^{m})$,  $J_{N,s}\in \mathcal D(\overline \R^{s})$, let $X_1^{(N)},X_2^{(N)},\dots$ be a sequence of independent and identically distributed random vectors with underlying distribution function $J_{N,m}$ and let $V_1^{(N)},V_2^{(N)},\dots$ be a sequence of independent and identically distributed random vectors with underlying distribution function $J_{N,s}$.  Assume that $X_1^{(N)},X_2^{(N)},\dots$ and $V_1^{(N)},V_2^{(N)},\dots$  are independent. In addition, let $\mathbb U_N^{(N)}=(U_N^{(N)}(z);z\in\overline\R^u)$ be the process (\ref{procallg}) based on the random vectors $X_1^{(N)},\dots,X_{n}^{(N)}$ and $V_1^{(N)},\dots,V_{r}^{(N)}$ and denote by $\KS_N^{(N)}$ and $\CvM_N^{(N)}$ the statistics (\ref{KSallg}) and (\ref{CvMallg}) based on this random vectors.

\begin{thm}\label{thmallg2} Suppose  $T$ and $Q$ are uniformly Hadamard differentiable in $ J$ with Hada\-mard derivative $\mathrm d T( J)$ and $\mathrm d Q( J)$, respectively. Assume  $ J$ is uniformly continuous. Then,
\begin{equation*}
\mathbb U_N^{(N)}\overset{\mathrm d}{\longrightarrow} \mathbb U~\text{as}~\min N\rightarrow\infty.
\end{equation*}
\end{thm}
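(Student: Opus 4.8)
The plan is to prove Theorem \ref{thmallg2} by the functional delta method, exploiting the uniform Hadamard differentiability of $T$ and $Q$ in $J$ to pass from the joint weak convergence of the two underlying weighted empirical processes to the convergence of $\mathbb U_N^{(N)}$. The argument proceeds in two stages: first establish the joint limit of the scaled empirical processes of the two independent Monte-Carlo samples, then linearize $T-Q$ around the moving centre $J_N$.

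First I would write $\hat{\mathbb F}_n^{(N)}(x)=\sum_{i=1}^n\alpha_{i,n}\I(X_i^{(N)}\le x)$ and $\hat{\mathbb G}_r^{(N)}(v)=\sum_{i=1}^r\beta_{i,r}\I(V_i^{(N)}\le v)$, and set $A_N:=\sqrt[4]{nr}\,(\hat{\mathbb F}_n^{(N)}-J_{N,m})$ and $B_N:=\sqrt[4]{nr}\,(\hat{\mathbb G}_r^{(N)}-J_{N,s})$. Since the $X_i^{(N)}$ are i.i.d.\ with distribution function $J_{N,m}\to J_m$ uniformly and $J$ is uniformly continuous, Theorem \ref{thm2} applied along $\min N\to\infty$ (with $J_{N,m}$ in the role of $G_n$ and weights obeying (\ref{seq})) gives $\sqrt n\,(\hat{\mathbb F}_n^{(N)}-J_{N,m})\overset{\mathrm d}{\longrightarrow}\mathbb M_m$, a centred Gaussian process with uniformly $\rho$-continuous paths; the analogue for the second sample uses the weights (\ref{seq2}). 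Because $\sqrt[4]{nr}/\sqrt n=(r/n)^{1/4}$ and $\sqrt[4]{nr}/\sqrt r=(n/r)^{1/4}$ converge to powers of $\eta$ by (\ref{samplesize}), Slutsky's lemma turns these into $A_N\overset{\mathrm d}{\longrightarrow}M_m$ and $B_N\overset{\mathrm d}{\longrightarrow}M_s$, the deterministic factors combining with $\kappa$ and $\tau$ to produce exactly the covariances $c_m$ and $c_s$. As the two samples are independent, $(A_N,B_N)\overset{\mathrm d}{\longrightarrow}\mathbb M=(\mathbb M_m,\mathbb M_s)$ jointly in $\ell^\infty(\overline\R^m)\times\ell^\infty(\overline\R^s)$ with $\mathbb M_m$ and $\mathbb M_s$ independent.

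Next I would linearize. Put $t_N:=(nr)^{-1/4}$ and $L_N:=(A_N,B_N)$, so that $(\hat{\mathbb F}_n^{(N)},\hat{\mathbb G}_r^{(N)})=J_N+t_N L_N\in\mathcal D(\overline\R^m)\times\mathcal D(\overline\R^s)$. Using $T(J_N)=Q(J_N)$ together with (\ref{procallg}), the process reads $U_N^{(N)}=\frac{1}{t_N}\big[(T(J_N+t_N L_N)-T(J_N))-(Q(J_N+t_N L_N)-Q(J_N))\big]$. Since $J_N\to J$ uniformly, $t_N\to 0$ and $L_N\overset{\mathrm d}{\longrightarrow}\mathbb M$, I would invoke the almost-sure representation theorem for weak convergence in $\ell^\infty$ (see \cite{van}) to obtain versions $\tilde L_N$ and $\tilde{\mathbb M}$ on a common space with $\tilde L_N\to\tilde{\mathbb M}$ almost surely and uniformly, chosen so that $J_N+t_N\tilde L_N$ lies in $\mathcal D(\overline\R^m)\times\mathcal D(\overline\R^s)$ almost surely. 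The uniform Hadamard differentiability of $T$ and $Q$ in $J$ then applies pathwise to the sequences $J_N\to J$, $\tilde L_N\to\tilde{\mathbb M}$, $t_N\to0$, giving $\frac{1}{t_N}(T(J_N+t_N\tilde L_N)-T(J_N))\to\mathrm d T(J)(\tilde{\mathbb M})$ and the analogue for $Q$, uniformly on $\overline\R^u$, almost surely. Subtracting and transferring back yields $U_N^{(N)}\overset{\mathrm d}{\longrightarrow}\mathrm d T(J)(\mathbb M)-\mathrm d Q(J)(\mathbb M)=U$, which is exactly the limit $\mathbb U$; continuity and linearity of the Hadamard derivatives guarantee that $\mathbb U$ is a well-defined, measurable limit element.

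I expect the main obstacle to be the delta-method step rather than the empirical-process step. Two points need care. First, the centre $J_N$ moves with $N$, so ordinary tangential Hadamard differentiability is insufficient; it is precisely the uniform version, permitting simultaneously varying base point $K_n\to K$ and direction $L_n\to L$, that legitimizes the pathwise linearization, and one must verify that the representation keeps the perturbed argument inside the domain $\mathcal D(\overline\R^m)\times\mathcal D(\overline\R^s)$. Second, the weighted empirical processes are generally non-measurable as maps into $\ell^\infty$, so all convergences must be read through outer expectations and the almost-sure representation applied to perfect representatives; here the standing assumption that $T(\hat{\mathbb F}_n,\hat{\mathbb G}_r)$, $Q(\hat{\mathbb F}_n,\hat{\mathbb G}_r)$ and $T(\hat{\mathbb J}_N)$ are step functions with countable vertices secures measurability of the suprema and integrals. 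Bookkeeping the two distinct rates $\sqrt n$ and $\sqrt r$ through the single normalization $\sqrt[4]{nr}$ via (\ref{samplesize}) is routine once the joint limit $\mathbb M$ is in place.
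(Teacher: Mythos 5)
Your proposal is correct and follows essentially the same route as the paper's proof: both establish joint weak convergence of the $\sqrt[4]{nr}$-scaled empirical processes centered at $J_{N,m}$ and $J_{N,s}$ to $\mathbb M$ (the paper re-runs the triangular-array argument behind Theorem \ref{thm1}, where you instead cite Theorem \ref{thm2} together with Slutsky's lemma and (\ref{samplesize}) — an equivalent step), then invoke the almost-sure representation theorem and linearize $T-Q$ at the moving center $J_N$ with $t_N=(nr)^{-1/4}$ via uniform Hadamard differentiability, using $T(J_N)=Q(J_N)$, exactly as the paper does.
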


\begin{cor}\label{corallg2}
Suppose  $T$ and $Q$ are uniformly Hadamard differentiable in $ J$ with Hadamard derivative $\mathrm d T( J)$ and $\mathrm d Q( J)$, respectively. Assume  $ J$ is uniformly continuous. Then,
\begin{equation*}
\KS_N^{(N)}\overset{\mathrm d}{\longrightarrow}\sup_{z\in\overline\R^u}|U(z)|~\text{and}~\CvM_N^{(N)}\overset{\mathrm d}{\longrightarrow}\int U^2(z) T(  J)(\mathrm d z)~\text{as}~\min N\rightarrow\infty.
\end{equation*}
\end{cor}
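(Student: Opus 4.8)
The plan is to derive both limits from the weak convergence $\mathbb U_N^{(N)}\overset{\mathrm d}{\to}\mathbb U$ in $\ell^\infty(\overline\R^u)$ supplied by Theorem \ref{thmallg2}, combined with the continuous mapping theorem for weak convergence in the sense of \cite{van}. The Kolmogorov-Smirnov part is immediate: the supremum norm $f\mapsto\|f\|_{\overline\R^u}=\sup_{z\in\overline\R^u}|f(z)|$ is Lipschitz continuous on $(\ell^\infty(\overline\R^u),\|\cdot\|_{\overline\R^u})$, so the continuous mapping theorem applied to $\mathbb U_N^{(N)}\overset{\mathrm d}{\to}\mathbb U$ yields $\KS_N^{(N)}=\|\mathbb U_N^{(N)}\|_{\overline\R^u}\overset{\mathrm d}{\to}\|\mathbb U\|_{\overline\R^u}=\sup_{z\in\overline\R^u}|U(z)|$.

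For the Cram\'er-von-Mises part the integrator in (\ref{CvMallg}) is the random measure induced by $T(\hat {\mathbb J}_N^{(N)})$, where $\hat {\mathbb J}_N^{(N)}=j(\hat {\mathbb F}_n^{(N)},\hat {\mathbb G}_r^{(N)})$ is built from the empirical distribution functions of the i.i.d.\ samples, and the first step is to show that this integrator converges in probability to the deterministic $T(J)$. Since the underlying distribution functions converge uniformly to the two components of $J$, a Glivenko-Cantelli argument (alternatively, the tightness of the underlying empirical processes already exploited for Theorem \ref{thmallg2}) gives that $\hat {\mathbb F}_n^{(N)}$ and $\hat {\mathbb G}_r^{(N)}$ converge uniformly in probability to these components. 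Continuity of $j$ and of $T$ with respect to the supremum metrics then yields $T(\hat {\mathbb J}_N^{(N)})\to T(j(J))=T(J)$ in probability, where $j(J)=J$ because $T(J)=Q(J)$ (by continuity of $T$ and $Q$ together with $T(J_N)=Q(J_N)$ and $J_N\to J$), so that $J$ lies in the fixed-point set of $j$.

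It remains to combine the two convergences. Because the integrator converges in probability to the constant limit $T(J)$, the pair $(\mathbb U_N^{(N)},T(\hat {\mathbb J}_N^{(N)}))$ converges in distribution to $(\mathbb U,T(J))$ in the product space $\ell^\infty(\overline\R^u)\times\mathcal D(\overline\R^u)$ by a Slutsky-type lemma from \cite{van}. I would then apply the continuous mapping theorem to the functional $\Psi(f,H):=\int f^2(z)H(\mathrm d z)$. The decisive continuity fact is that whenever $f_N\to f$ uniformly with $f$ bounded and continuous and $H_N\to H$ uniformly (hence weakly as measures), the decomposition $\int f_N^2\,\mathrm d H_N-\int f^2\,\mathrm d H=\int(f_N^2-f^2)\,\mathrm d H_N+(\int f^2\,\mathrm d H_N-\int f^2\,\mathrm d H)$ shows $\Psi(f_N,H_N)\to\Psi(f,H)$, the first term being bounded by $\|f_N^2-f^2\|_{\overline\R^u}$ and the second vanishing by weak convergence applied to the bounded continuous test function $f^2$. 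Since $\mathbb U$ has a.s.\ uniformly $\rho$-continuous (hence bounded and continuous) sample paths on the compact space $\overline\R^u$, the limit $(\mathbb U,T(J))$ almost surely lies in the continuity set of $\Psi$, so the continuous mapping theorem gives $\CvM_N^{(N)}=\Psi(\mathbb U_N^{(N)},T(\hat {\mathbb J}_N^{(N)}))\overset{\mathrm d}{\to}\Psi(\mathbb U,T(J))=\int U^2(z)T(J)(\mathrm d z)$.

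The main obstacle is the Cram\'er-von-Mises step, specifically justifying passage to the limit under an integral whose integrator is both random and $N$-dependent: uniform convergence of the distribution functions yields only weak convergence of the associated measures and not convergence in total variation, so a crude bound by the total mass of a signed measure is unavailable. The essential point is therefore that the limit process $\mathbb U$ has continuous sample paths, which is exactly what makes the weak-convergence argument for the integrator go through and places $(\mathbb U,T(J))$ in the continuity set of $\Psi$; ensuring that the $\rho$-continuity established in the construction of $\mathbb U$ translates into ordinary continuity on $\overline\R^u$, so that $U^2$ is an admissible test function for the weak convergence of the integrators, is the technical heart of this step.
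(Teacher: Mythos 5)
Your proposal is correct and follows essentially the same route as the paper: the paper likewise first shows $\sup_{z}|T(\hat{\mathbb J}_N^{(N)})(z)-T(J)(z)|\rightarrow 0$ in probability (by adapting the Glivenko--Cantelli-type argument of Lemma \ref{lemallg} to the triangular arrays $\hat{\mathbb F}_n^{(N)},\hat{\mathbb G}_r^{(N)}$ using the uniform convergence $J_{N,m}\rightarrow J_m$, $J_{N,s}\rightarrow J_s$ and the continuity of $T$ and $j$), and then combines this with Theorem \ref{thmallg2} and the continuous mapping theorem exactly as in the proof of Corollary \ref{cor1}. Your explicit Slutsky-plus-decomposition treatment of the random integrator, resting on the a.s.\ ($\rho$-)continuity of the sample paths of $\mathbb U$, is precisely the detail the paper delegates to the proof of Theorem 3 in \cite{bar}.
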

The following result implies that the test (\ref{Testallg}) is a test of asymptotically exact size $\alpha$.
\begin{cor}\label{corallg3}
Assume $T(\mathbb F_n,\mathbb G_r)=Q(\mathbb F_n,\mathbb G_r)$ for $\min N$ sufficiently large. Suppose  $T$ and $Q$ are uniformly Hadamard differentiable in $(F,G)$ with Hadamard derivative $\mathrm d T( F,G)$ and $\mathrm d Q( F,G)$, respectively, and the distribution function of  $\sup_{z\in\overline\R^u}|U(z)|$ or $\int U^2(z) T( F,G)(\mathrm d z)$ is strictly increasing on the non-negative half-line. Then,
\begin{equation*}
\lim_{\min N\rightarrow\infty}P(\KS_N\ge c_{N;1-\alpha})=\alpha~\text{or}~\lim_{\min N\rightarrow\infty}P(\CvM_N\ge d_{N;1-\alpha})=\alpha.
\end{equation*}
\end{cor}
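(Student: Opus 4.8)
The plan is to mirror the argument behind Corollary \ref{cor3}, combining the limit law of the test statistic under the hypothesis (Corollary \ref{corallg1}) with the limit law of the Monte-Carlo statistic (Corollary \ref{corallg2}), and then to handle the fact that the critical value $c_{N;1-\alpha}$ is itself random, since it is the quantile computed from the data-dependent configuration $(F_1,\dots,F_n)=(\hat{\mathbb J}_{N,m},\dots,\hat{\mathbb J}_{N,m})$ and $(G_1,\dots,G_r)=(\hat{\mathbb J}_{N,s},\dots,\hat{\mathbb J}_{N,s})$. First I would identify the two relevant limit laws. Under the hypothesis, continuity of $T$ and $Q$ together with $\mathbb F_n\to F$ and $\mathbb G_r\to G$ uniformly gives $T(F,G)=Q(F,G)$, whence $J=j(F,G)=(F,G)$ by the defining property of $j$; consequently the limit in Corollary \ref{corallg1} and the limit in Corollary \ref{corallg2} evaluated at $J=(F,G)$ coincide. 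Write $\Psi$ for the distribution function of $\sup_{z\in\overline\R^u}|U(z)|$ (respectively of $\int U^2(z)T(F,G)(\mathrm d z)$) and, under the strict-monotonicity hypothesis on the relevant statistic, set $c_{1-\alpha}:=\Psi^{-1}(1-\alpha)$.

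The central step is to prove that the random critical value converges, $c_{N;1-\alpha}\overset{P}\longrightarrow c_{1-\alpha}$. By construction $c_{N;1-\alpha}$ is the conditional $(1-\alpha)$-quantile of $\KS_N^{(N)}$ given $\hat{\mathbb J}_N$, where the resampling law is $\hat{\mathbb J}_N=j(\hat{\mathbb F}_n,\hat{\mathbb G}_r)$ and $T(\hat{\mathbb J}_N)=Q(\hat{\mathbb J}_N)$ holds automatically because $T\circ j=Q\circ j$. The weighted empirical processes are asymptotically tight, so $\hat{\mathbb F}_n-\mathbb F_n\to0$ and $\hat{\mathbb G}_r-\mathbb G_r\to0$ uniformly in probability; together with $\mathbb F_n\to F$, $\mathbb G_r\to G$ and continuity of $j$ this yields $\hat{\mathbb J}_N\to J$ uniformly in probability.

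Corollary \ref{corallg2} applies to every deterministic sequence $J_N\to J$ with $T(J_N)=Q(J_N)$, so I would transfer it to the random $\hat{\mathbb J}_N$ by a subsequence argument: from any subsequence extract a further one along which $\hat{\mathbb J}_N\to J$ uniformly almost surely, apply Corollary \ref{corallg2} conditionally on the data to the realized deterministic sequence to obtain weak convergence of the conditional law of $\KS_N^{(N)}$ to $\Psi$, and invoke that a strictly increasing and continuous limiting distribution function forces convergence of the associated quantiles. Since every subsequence has a further subsequence along which the conditional quantile tends to $c_{1-\alpha}$, this gives $c_{N;1-\alpha}\overset{P}\longrightarrow c_{1-\alpha}$. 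Finally I would combine Corollary \ref{corallg1}, which gives $\KS_N\overset{\mathrm d}\longrightarrow\sup_{z\in\overline\R^u}|U(z)|$, with $c_{N;1-\alpha}\overset{P}\longrightarrow c_{1-\alpha}$ through a Slutsky-type argument and the continuity of $\Psi$ at $c_{1-\alpha}$ to obtain $P(\KS_N\ge c_{N;1-\alpha})\to 1-\Psi(c_{1-\alpha})=\alpha$. The Cram\'er-von-Mises case is identical with $\int U^2(z)T(F,G)(\mathrm d z)$ replacing the supremum, which explains the disjunction in the statement.

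The main obstacle is exactly this middle step: transferring the deterministic-sequence convergence of Corollary \ref{corallg2} to the random resampling law $\hat{\mathbb J}_N$ and extracting convergence of the conditional quantiles. The strict monotonicity assumption, together with the continuity of the law of a nondegenerate supremum or integral functional of the Gaussian process $\mathbb U$, is precisely what upgrades weak convergence of the conditional distribution functions to convergence of the critical values and validates the final continuity evaluation $1-\Psi(c_{1-\alpha})=\alpha$.
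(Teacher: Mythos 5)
Your proposal is correct and follows essentially the same route as the paper: the paper's (terse) proof likewise combines Corollary \ref{corallg1}, Corollary \ref{corallg2}, Lemma \ref{lemallg} and the continuity of $j$, noting $J=(F,G)$ via Theorem \ref{thmallg1}, to get convergence in probability of the random critical values to the limiting quantile and then concludes as in Corollary \ref{cor7}. Your subsequence argument for transferring the deterministic-sequence statement of Corollary \ref{corallg2} to the random resampling law $\hat{\mathbb J}_N$ simply makes explicit a step the paper leaves implicit.
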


\subsection{Limit results under alternatives}
The limit behavior  in probability of the test statistics   (\ref{KSallg}) and (\ref{CvMallg}) is given in the following result. It holds under the hypotheses as well as under alternatives.
\begin{thm} \label{thmallg3}It is
\begin{equation*}
\forall\varepsilon >0:P\bigg(\frac{1}{\sqrt[4]{nr}}\KS_N\ge\sup_{z\in\overline\R^u}|T( F,G)(z)-Q( F,G)(z)|-\varepsilon+o_p(1)\bigg)\longrightarrow 1~\text{as}~\min N\rightarrow\infty,
\end{equation*}
and
\begin{equation*}
\begin{split}
\forall\varepsilon >0:&P\bigg(\frac {1}{\sqrt{nr}} \CvM_N\ge \int\big(T(F,G)(z)-Q(F,G)(z)\big)^2 T( J)(\mathrm d z)-\varepsilon+o_p(1) \bigg)\longrightarrow 1\\
&\text{as}~\min N\rightarrow\infty.
\end{split}
\end{equation*}
\end{thm}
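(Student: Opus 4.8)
The plan is to reduce the whole statement to two uniform laws of large numbers for the weighted empirical distribution functions, namely $\hat{\mathbb F}_n\to F$ uniformly on $\overline\R^m$ and $\hat{\mathbb G}_r\to G$ uniformly on $\overline\R^s$, both in probability, and then to transport these convergences through the functionals $T$, $Q$ and $j$ by continuity. First I would establish, by a purely deterministic weighted Ces\`aro argument, that $\mathbb F_n=\sum_{i=1}^n\alpha_{i,n}F_i\to F$ uniformly: given $\varepsilon>0$, split the sum at an index beyond which $\norm{F_i-F}_{\overline\R^m}<\varepsilon$ and use $\sum_{i=1}^n\alpha_{i,n}=1$ together with $\max_{1\le i\le n}\alpha_{i,n}\to 0$ to render the remaining finitely many terms negligible. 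Next, the centred process $\sqrt n(\hat{\mathbb F}_n-\mathbb F_n)$ has pointwise variance bounded by $\sum_{i=1}^n\alpha_{i,n}^2\to 0$, so $\hat{\mathbb F}_n(x)-\mathbb F_n(x)\to 0$ in probability for each $x$; combining this pointwise convergence with the asymptotic $\rho$-equicontinuity supplied by Lemma \ref{lem0} (applied with $K_{i,n}=F_i$) and the total boundedness of $(\overline\R^m,\rho)$ yields $\norm{\hat{\mathbb F}_n-\mathbb F_n}_{\overline\R^m}\to 0$ in probability via the standard finite-cover argument. Together these give $\hat{\mathbb F}_n\to F$ uniformly in probability, and symmetrically $\hat{\mathbb G}_r\to G$; since $\min N\to\infty$ forces $n,r\to\infty$, the pair $(\hat{\mathbb F}_n,\hat{\mathbb G}_r)\to(F,G)$ in the product supremum metric, in probability. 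Observe that the hypothesis $T(\mathbb F_n,\mathbb G_r)=Q(\mathbb F_n,\mathbb G_r)$ is never used, which is precisely why the conclusion holds under alternatives as well.

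Because $T$, $Q$ and $j$ are continuous with respect to the supremum metrics, the continuous mapping theorem for convergence in probability (measurability being guaranteed by the assumed step-function structure) then delivers, all uniformly and in probability,
\[
T(\hat{\mathbb F}_n,\hat{\mathbb G}_r)\to T(F,G),\quad Q(\hat{\mathbb F}_n,\hat{\mathbb G}_r)\to Q(F,G),\quad \hat{\mathbb J}_N=j(\hat{\mathbb F}_n,\hat{\mathbb G}_r)\to j(F,G)=J,
\]
and hence $T(\hat{\mathbb J}_N)\to T(J)$. For the Kolmogorov--Smirnov statistic I would simply invoke the reverse triangle inequality in $\ell^\infty(\overline\R^u)$,
\[
\frac{1}{\sqrt[4]{nr}}\KS_N=\norm{T(\hat{\mathbb F}_n,\hat{\mathbb G}_r)-Q(\hat{\mathbb F}_n,\hat{\mathbb G}_r)}_{\overline\R^u}\ge \sup_{z\in\overline\R^u}\abs{T(F,G)(z)-Q(F,G)(z)}+o_p(1),
\]
the $o_p(1)$ collecting the two approximation errors from the previous display; for each fixed $\varepsilon>0$ this remainder exceeds $-\varepsilon$ with probability tending to one, which is exactly the asserted statement.

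For the Cram\'er--von--Mises statistic I would write $\frac{1}{\sqrt{nr}}\CvM_N=\int\phi_N\,T(\hat{\mathbb J}_N)(\mathrm d z)$ with $\phi_N:=(T(\hat{\mathbb F}_n,\hat{\mathbb G}_r)-Q(\hat{\mathbb F}_n,\hat{\mathbb G}_r))^2$ and $\phi:=(T(F,G)-Q(F,G))^2$. Since all these functions take values in $[0,1]$ and squaring is Lipschitz on $[-1,1]$, $\norm{\phi_N-\phi}_{\overline\R^u}\to 0$ in probability, so replacing $\phi_N$ by $\phi$ costs at most $\norm{\phi_N-\phi}_{\overline\R^u}\cdot T(\hat{\mathbb J}_N)(\overline\R^u)=o_p(1)$, because $T(\hat{\mathbb J}_N)$ is a probability measure. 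It then remains to pass to the limit in $\int\phi\,T(\hat{\mathbb J}_N)(\mathrm d z)$, and this is where the main obstacle lies: the integrator $T(\hat{\mathbb J}_N)$ is random and, in the absence of any regularity assumption on $J$ in this theorem, the limit $T(J)$ may carry atoms at which the fixed integrand $\phi$ is discontinuous, so a naive weak-convergence argument is insufficient. I would resolve this by exploiting that $\phi$ is of bounded variation and that $T(\hat{\mathbb J}_N)\to T(J)$ holds in the strong sense of uniform convergence of distribution functions: a (multivariate) integration-by-parts transfers the difference onto $\norm{T(\hat{\mathbb J}_N)-T(J)}_{\overline\R^u}\to 0$ integrated against the signed measure $\mathrm d\phi$ induced by $\phi$, yielding $\int\phi\,T(\hat{\mathbb J}_N)(\mathrm d z)=\int\phi\,T(J)(\mathrm d z)+o_p(1)$. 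Consequently $\frac{1}{\sqrt{nr}}\CvM_N\ge\int(T(F,G)(z)-Q(F,G)(z))^2\,T(J)(\mathrm d z)+o_p(1)$, and the $\forall\varepsilon>0$ probability statement follows as before. The delicate point throughout is precisely this control of the integral against the random, possibly discontinuous integrator without regularity of $J$, which is also what forces the $o_p(1)$ slack into the statement.
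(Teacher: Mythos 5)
Your proposal is correct, and its skeleton --- uniform consistency of $(\hat{\mathbb F}_n,\hat{\mathbb G}_r)$, transport through $T$, $Q$, $j$ by continuity, a reverse triangle inequality for $\KS_N$, and a replacement argument for $\CvM_N$ --- is exactly what the paper intends, since its proof of Theorem \ref{thmallg3} consists of one sentence: the result ``follows with arguments similar to the Proof of Theorem \ref{thm3} by using Lemma \ref{lemallg} and the continuity of $T$, $Q$ and $j$.'' You deviate in two places. First, you re-derive the content of Lemma \ref{lemallg} from scratch via Chebyshev at finitely many points, the asymptotic $\rho$-equicontinuity of Lemma \ref{lem0} and total boundedness of $(\overline\R^m,\rho)$, whereas the paper gets it in one line from Corollary \ref{cor1} (applied with $G_n:=\mathbb F_n$, so its hypothesis is vacuous) and Slutsky; your route is more elementary (no central limit theorem needed), the paper's is shorter. (Minor wording slip on your side: it is the unnormalized difference $\hat{\mathbb F}_n(x)-\mathbb F_n(x)$ whose variance is bounded by $\sum_{i}\alpha_{i,n}^2\to 0$, not the process scaled by $\sqrt n$; this is what your Chebyshev step actually uses, so nothing breaks.) Second, and more substantively, your treatment of the Cram\'er--von Mises term is more careful than the paper's sketch. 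In Theorem \ref{thm3} the integrator $G_n$ is deterministic and the integrand $(F-G)^2$ is continuous, so weak convergence suffices there; transplanted to Theorem \ref{thmallg3}, the integrator $T(\hat{\mathbb J}_N)$ is random and the fixed integrand $\phi=(T(F,G)-Q(F,G))^2$ need not be continuous, because this theorem (unlike Theorem \ref{thmallg2} and Corollary \ref{corallg4}) imposes no continuity on $J$, $T(F,G)$ or $Q(F,G)$. You identify precisely this obstacle and close it by a Koksma--Hlawka/integration-by-parts bound: $\phi$ has bounded (Hardy--Krause) variation, and the convergence $T(\hat{\mathbb J}_N)\to T(J)$ is uniform in Kolmogorov distance rather than merely weak, so $|\int\phi\,\mathrm d(T(\hat{\mathbb J}_N)-T(J))|\le C\,\norm{T(\hat{\mathbb J}_N)-T(J)}_{\overline\R^u}\to 0$ in probability. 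This buys a complete argument in the stated generality, whereas the paper's argument, taken literally, implicitly needs continuity of $T(F,G)-Q(F,G)$ (which does hold in all of its applications: homogeneity, symmetry, independence). Your sup-norm bound $|\int(\phi_N-\phi)\,\mathrm dT(\hat{\mathbb J}_N)|\le\norm{\phi_N-\phi}_{\overline\R^u}$, exploiting that $T(\hat{\mathbb J}_N)$ is a probability measure, is also a clean substitute for the paper's Cauchy--Schwarz step and is equally valid.
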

The following corollary is important for consistency considerations with respect to the test (\ref{Testallg}).
\begin{cor}\label{corallg4}
Suppose  $T$ and $Q$ are uniformly Hadamard differentiable in $ J$ with Hadamard derivative $\mathrm d T( J)$ and $\mathrm d Q( J)$, respectively. Assume $ J$ is uniformly continuous and  the distribution function of  $\sup_{z\in\overline\R^u}|U(z)|$ or $\int U^2(z) T(  J)(\mathrm d z)$ is strictly increasing on the non-negative half-line. Moreover, assume  $T( F,G)-Q( F,G)$ does not vanish everywhere or $T( J)$-almost everywhere. Then,
\begin{equation*} 
\lim_{\min N\rightarrow\infty}P(\KS_N\ge c_{N;1-\alpha})=1~\text{or}~\lim_{\min N\rightarrow\infty}P(\CvM_N\ge d_{N;1-\alpha})=1.
\end{equation*}
\end{cor}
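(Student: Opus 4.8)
The plan is to pair the probabilistic lower bound on the test statistics furnished by Theorem \ref{thmallg3} with the convergence of the Monte-Carlo critical values to finite constants. I carry out the argument for the Kolmogorov-Smirnov statistic under the hypotheses that $T(F,G)-Q(F,G)$ does not vanish everywhere and that the distribution function of $\sup_{z\in\overline\R^u}|U(z)|$ is strictly increasing; the Cram\'er-von-Mises statistic is handled identically, reading the second display of Theorem \ref{thmallg3}, the $T(J)$-almost-everywhere nonvanishing condition, and the strict monotonicity of the distribution function of $\int U^2(z)T(J)(\mathrm d z)$ in place of their counterparts. The disjunctions in the hypotheses then produce the disjunction in the conclusion.

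First I would use the alternative to blow up the statistic. Set $\delta:=\sup_{z\in\overline\R^u}|T(F,G)(z)-Q(F,G)(z)|$, which is strictly positive by assumption. Applying Theorem \ref{thmallg3} with $\varepsilon=\delta/2$ gives $P\big(\KS_N\ge\sqrt[4]{nr}\,(\delta/2+o_p(1))\big)\to 1$; since $\delta/2+o_p(1)\ge\delta/4$ with probability tending to one and $\sqrt[4]{nr}\to\infty$ as $\min N\to\infty$, it follows that $\KS_N\to\infty$ in probability, i.e.\ $P(\KS_N\ge M)\to 1$ for every fixed $M>0$.

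Next I would show that the critical value converges to a finite constant. Conditionally on the data, $c_{N;1-\alpha}$ is the $(1-\alpha)$-quantile of $\KS_N^{(N)}$, the statistic built from independent and identically distributed draws from $\hat{\mathbb J}_{N,m}$ and $\hat{\mathbb J}_{N,s}$, which is precisely the object of Theorem \ref{thmallg2} and Corollary \ref{corallg2} with the deterministic sequence $J_N$ replaced by the random $\hat{\mathbb J}_N=j(\hat{\mathbb F}_n,\hat{\mathbb G}_r)$. A Glivenko-Cantelli argument together with the continuity of $j$ yields $\hat{\mathbb J}_N\to J$ uniformly in probability: the centered weighted empirical distribution functions vanish uniformly in probability by the empirical-process machinery underlying this section (cf.\ Lemma \ref{lem0}), while $\mathbb F_n\to F$ and $\mathbb G_r\to G$ uniformly, and moreover $T(\hat{\mathbb J}_N)=Q(\hat{\mathbb J}_N)$ holds by $T\circ j=Q\circ j$. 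To transfer the deterministic Theorem \ref{thmallg2} to this random sequence I would argue along subsequences: every subsequence admits a further subsequence on which $\hat{\mathbb J}_N\to J$ almost surely, and on such a sample path Corollary \ref{corallg2} applies to the now-deterministic sequence $\hat{\mathbb J}_N(\omega)$, giving $\KS_N^{(N)}\overset{\mathrm d}{\to}\sup_{z}|U(z)|$ conditionally. Because the distribution function of $\sup_{z}|U(z)|$ is strictly increasing on $[0,\infty)$, its $(1-\alpha)$-quantile $q_{1-\alpha}$ is a continuity point of the quantile functional, so weak convergence upgrades to $c_{N;1-\alpha}\to q_{1-\alpha}$ along the subsequence; the subsequence principle then gives $c_{N;1-\alpha}\overset{P}{\to}q_{1-\alpha}<\infty$.

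Finally I would combine the two steps: choosing any $M>q_{1-\alpha}$, the event $\{\KS_N\ge M\}\cap\{c_{N;1-\alpha}\le M\}$ forces $\KS_N\ge c_{N;1-\alpha}$ and has probability tending to one, so $P(\KS_N\ge c_{N;1-\alpha})\to 1$. The main obstacle is the middle step: since $\hat{\mathbb J}_N$ is random, Theorem \ref{thmallg2} does not apply directly, and one must route its deterministic conclusion through an almost-sure subsequence representation and then invoke the strict-monotonicity assumption to pass from weak convergence of the simulated statistic to convergence of its $(1-\alpha)$-quantile. A secondary technical point is verifying the Glivenko-Cantelli convergence of the weighted empirical distribution functions in the not-identically-distributed setting, which leans again on the Vapnik-\v{C}hervonenkis machinery behind Lemma \ref{lem0}.
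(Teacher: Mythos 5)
Your proposal is correct and follows essentially the same route as the paper: Theorem \ref{thmallg3} forces the statistic to exceed any fixed threshold under the alternative, while Corollary \ref{corallg2} together with Lemma \ref{lemallg}, the continuity of $j$, and the strict monotonicity of the limiting distribution function gives convergence in probability of the Monte-Carlo critical values to a finite quantile, exactly the ingredients the paper cites. Your explicit subsequence argument for transferring the deterministic-sequence Theorem \ref{thmallg2}/Corollary \ref{corallg2} to the random $\hat{\mathbb J}_N$ is a welcome elaboration of a step the paper leaves implicit, but it does not change the underlying method.
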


\subsection{Testing homogeneity}\label{hom}

Consider the two sample case with equal dimensions $m=s=u$ and equal sample sizes $n=r$. Suppose $F_1,\dots,F_n$ and $G_1,\dots,G_n$ are unknown, $\alpha_{1,n},\dots,\alpha_{n,n}$ and $\beta_{1,n},\dots,\beta_{n,n}$ are known and that the user has to treat the hypothesis of homogeneity
\begin{equation}\label{testpr4}
\mathrm H_n:\mathbb F_n=\mathbb G_n
\end{equation}
on the basis of the observations $X_1,\dots,X_n$ and $V_1,\dots,V_n$.

\medskip
In this situation, the maps  $T:\mathcal D(\overline \R^{m})\times \mathcal D(\overline \R^{m})\rightarrow \mathcal D(\overline \R^m)$ and $Q:\mathcal D(\overline \R^{m})\times \mathcal D(\overline \R^{m})\rightarrow \mathcal D(\overline \R^m)$ are given by the continuous maps
\begin{equation*}
T( K_1, K_2)= K_1,~Q( K_1, K_2)= K_2,~K=( K_1,K_2)\in \mathcal D(\overline \R^{m})\times \mathcal D(\overline \R^{m}).
\end{equation*}
Let  $K=( K_1, K_2)$ and $L=( L_1, L_2)$ be arbitrary objects from the definition of the uniformly Hadamard differentiability given above. Now, the maps $T$ and $Q$ are uniformly Hadamard differentiable in $K$ with Hadamard derivative 
\begin{equation*}
\mathrm d T(K)(L)=L_1~\text{and}~\mathrm d Q(K)(L)= L_2,
\end{equation*}
respectively. Let $ j:\mathcal D(\overline \R^{m})\times \mathcal D(\overline \R^{m})\rightarrow\mathcal D(\overline \R^{m})\times \mathcal D(\overline \R^{m})$ be the continuous map 
\begin{equation*}
j(K)=\Big(\frac 1 2( K_1+ K_2),\frac 1 2( K_1+ K_2)\Big),~K=( K_1, K_2)\in \mathcal D(\overline \R^{m})\times \mathcal D(\overline \R^{m}).
\end{equation*}
Obviously, it is $T\circ j=Q\circ j$ and $j(K)=(K)$ for all $K\in \mathcal D(\overline \R^{m})\times \mathcal D(\overline \R^{m})$ with $T(K)=Q(K)$. Thus,
\begin{equation*}
\hat{\mathbb J} _{N,m}=\hat{\mathbb J} _{N,s}=\frac 1 2\big(\hat{\mathbb F}_n +\hat{\mathbb G}_n\big),
\end{equation*}
the test statistics have the expressions
\begin{equation}\label{KS4}
\KS_n=\sqrt n\sup_{x\in\overline\R^m}|\hat{\mathbb F}_n (x) -\hat{\mathbb G}_n(x)|
\end{equation}
as well as
\begin{equation}\label{CvM4}
\CvM_n=n\int\big( \hat{\mathbb F}_n (x) -\hat{\mathbb G}_n(x)\big)^2\frac 1 2\big(\hat{\mathbb F}_n +\hat{\mathbb G}_n\big)(\mathrm d x)
\end{equation}
and testing procedure
\begin{equation}\label{Test4}
\text{``Reject}~\mathrm H_n,~\text{iff}~\KS_n\ge c_{n;1-\alpha}\text{''}~\text{or testing procedure}~\text{``Reject}~\mathrm H_n,~\text{iff}~\CvM_n\ge d_{n;1-\alpha}\text{''} 
\end{equation}
is suggested, where $c_{n;1-\alpha}$ and $d_{n;1-\alpha}$ are calculated under $(F_1,\dots,F_n)=(G_1,\dots,G_n)=(\hat{\mathbb J} _{N,m},\dots,\hat{\mathbb J} _{N,m})$. If $\alpha_{i,n}=\beta_{i,n}$, $i=1,\dots,n$, testing procedure (\ref{Test4}), in particular the statistic (\ref{KS4}) or (\ref{CvM4}), is invariant under the transformation of the data
\begin{equation*}
\begin{split}
&\big((x_1,\dots,x_n),(v_1,\dots,v_n)\big)\longmapsto\big((v_1,\dots,v_n),(x_1,\dots,x_n)\big),\\
&\big((x_1,\dots,x_n),(v_1,\dots,v_n)\big)\in\times_{i=1}^n\R^m\times\times_{i=1}^n\R^m.
\end{split}
\end{equation*}
Moreover, $\mathbb U$ is a Gaussian process with expectation function identically equal to zero. Moreover,
\begin{equation*}
\Var\big(U(x)\big)=\Var\big(M_m(x)\big)+\Var\big(M_s(x)\big)=(\kappa+\tau)\big(J_m(x)-J_m^2(x)\big),~x\in\overline \R^m.
\end{equation*}
Because $F$ and $G$ are uniformly continuous,  $J_m$ is uniformly continuous, too, and the map $x \mapsto \Var(U(x))$, $x\in\overline \R^m$, does not vanish everywhere or $T(  J)$-almost everywhere. For that reason, the distribution function of  $\sup_{x\in\overline\R^m}|U(x)|$ or $\int U^2(x) T(  J)(\mathrm d x)$ is   strictly increasing on the non-negative half-line.
\begin{rmk}
\begin{itemize}
\item[a)] Modify the model introduced in Section \ref{einfuhr} in the following way. Add independent and identically distributed real valued random variables $U_1,U_2,\dots$  with unknown underlying distribution and  independent but not necessarily identically distributed real valued random variables $W_1,W_2,\dots$ with distribution $\mathcal L(W_i)=\mathcal L(Z_i)$, $i\in\N$. Now, the distribution of the error variable $\mathcal L(Z_i)$ and the error function $e_i$ are possibly unknown  for all $i\in\N$.  Assume the random variables  $Y_1,Y_2,\dots$,  $Z_1,Z_2,\dots$,  $U_1,U_2,\dots$ and  $W_1,W_2,\dots$ are independent. Suppose the user has to treat the testing problem of homogeneity
\begin{equation*}
\mathrm H:\mathcal L(Y_1)=\mathcal L(U_1),~\mathrm K:\mathcal L(Y_1)\neq \mathcal L(U_1)
\end{equation*}
on the basis of the observations $e_1(Y_1,Z_1),\dots,e_n(Y_n,Z_n)$ and $ e_1(U_1, W_1),\dots, \linebreak e_n(U_n, W_n)$. Putting $\alpha_{i,n}=\beta_{i,n}=\frac 1 n$, $X_i=e_i(Y_i,Z_i)$ and $V_i= e_i(U_i,W_i)$, $i=1,\dots,n$, the test (\ref{Test4}) is applicable to this testing problem. If the distribution function of $e(U_1,Z)$ is continuous, too, it follows from the results in this section that the test is of asymptotically exact size $\alpha$ and consistent.
\item[b)] The test (\ref{Test4}) is applicable to the hypothesis 
\begin{equation*}
\forall i\in\lbrace 1,\dots,n\rbrace :F_i=G_i.
\end{equation*}
 Putting $\alpha_{i,n}=\beta_{i,n}=\frac 1 n$, $i=1,\dots,n$, the test (\ref{Test4}) is also applicable to the hypothesis 
\begin{equation*}
\exists \pi_n:\lbrace 1,\dots,n\rbrace\rightarrow\lbrace 1,\dots,n\rbrace~\text{bijective}~\forall i\in\lbrace 1,\dots,n\rbrace :F_i=G_{\pi_n(i)}.
\end{equation*}
It follows from the results in this section that the test is of asymptotically exact size $\alpha$ and  consistent  with respect to suitable alternatives to this hypotheses.
\item[c)] Assume $F_i=F$ and $G_i=G$, $i\in\N$. Putting $\alpha_{i,n}=\beta_{i,n}=\frac 1 n$, $i=1,\dots,n$, the hypothesis (\ref{testpr4}) is a common hypothesis of homogeneity
\begin{equation*}
\mathrm H:F=G,
\end{equation*}
the test statistic (\ref{KS4}) or (\ref{CvM4}) is a common Kolmogorov-Smirnov  or  Cram\'er-von-Mises statistic for testing the hypothesis of homogeneity and the test (\ref{Test4}) is a common  Kolmogorov-Smirnov  or  Cram\'er-von-Mises test for testing the hypothesis of homogeneity in a multivariate setting. In this sense, a generalization of the identically distributed case  is considered.
\item[d)]
If $\alpha_{i,n}=\beta_{i,n}=\frac 1 n$, $i=1,\dots,n$, the test statistic (\ref{KS4}) or (\ref{CvM4}) has the form of a common Kolmogorov-Smirnov  or  Cram\'er-von-Mises type statistic for testing the hypothesis of homogeneity and the test (\ref{Test4}) hasthe form of a common  Kolmogorov-Smirnov  or  Cram\'er-von-Mises type test for testing the hypothesis of homogeneity in the identically distributed case. I.e., the results in this section imply that the common  Kolmogorov-Smirnov  or  Cram\'er-von-Mises type testing procedure  for testing the hypothesis of homogeneity  is applicable if the data come from different distributions and the mentioned conditions are fulfilled.
\end{itemize}
\end{rmk}

\subsection{Testing central symmetry}\label{sym}

\medskip
In order to apply the general results above to the hypothesis of central symmetry, another sample is needed. Let $U_1,U_2,\dots$ be a sequence of independent but not necessarily identically distributed random vectors  with values in  $\R^u$. For $i\in\N$, the random vector $U_i$ has the distribution function $C_i$ defined on $\overline \R^u$. Assume the existence of an uniformly continuous distribution function $C$ defined on $\overline \R^u$ with
\begin{equation*}
\lim_{i\rightarrow\infty}C_i=C~\text{uniformly on}~\overline\R^u.
\end{equation*}
For a given distribution function $ K$ defined on $\overline\R^u$ with related  probability measure $\mu$ on $\overline\BB^u$, i.e.,  $ K(z)=\mu([-\infty_u,z])$,  $u\in\overline \R^u$, write $ K^+:= K$ and $ K^-(z):=\mu([-z,\infty_u])$, $z\in\overline \R^u$. $\overline \BB^u$ denotes the Borel $\sigma$-field on $\overline\R^u$ and $[z,w]:=\times_{i=1}^u[z_i,w_i]$, $z,w\in\overline\R^u$, $z=(z_1,\dots,z_u)'$,  $w=(w_1,\dots,w_u)'$, if $z_i\le w _i$ for $i=1,\dots,u$, and  $[z,w]:=\emptyset$ else. Moreover, $\infty_{l}:=(\infty,\dots,\infty)'\in\overline\R^{l}$, ${l}\in\N$. Clearly, $ K^-$ is a distribution function. A distribution function $ K^+$  or a probability measure $\mu$  is called centrally symmetric, iff $ K^+= K^-$ or $\mu([-\infty_u,z])=\mu([-z,\infty_u])$ for all $z\in\overline \R^u$. A random vector $\xi$ with values in $ \R^u$ has a centrally symmetric distribution function iff $\xi$  and $-\xi$ have the same distribution. Concepts of symmetry in multivariate settings are presented by Serfling \cite{ser}.

\medskip
For $i\in\N$, denote by $C_i^+=C_i$ and $C_i^-$ the distribution functions of $U_i$ and $-U_i$, respectively.  Suppose  $C_1,\dots,C_n$ are unknown,  $\alpha_{1,n},\dots,\alpha_{n,n}$ are known and that the user has to treat the hypothesis
\begin{equation}\label{testpr3}
\mathrm H_n: \sum_{i=1}^n\alpha_{i,n} C_i^+= \sum_{i=1}^n\alpha_{i,n} C_i^-
\end{equation}
on the basis of the observations $U_1,\dots,U_n$. Regarding
\begin{equation*}
\begin{split}
&P(- U_i\le z)=1-\sum_{k=1}^u(-1)^{k+1}\sum_{1\le j_1<\dots<j_k\le u}P(U_{i,j_1}<-z_{j_1},\dots,U_{i,j_k}< -z_{j_k}),\\
&z\in\overline\R^{u},~i\in\N, 
\end{split}
\end{equation*}
it is obvious that the hypothesis (\ref{testpr3}) is equivalent to the hypothesis of central symmetry about the mixture distribution $\mathbb C_n(z):=\sum_{i=1}^n \alpha_{i,n}C_i(z)$, $z\in\overline\Rû$,
\begin{equation*}
\mathrm H_n: \mathbb C_n^+=\mathbb C_n^-.
\end{equation*}
In order to apply the general results above, consider the random vectors
\begin{equation}\label{vec}
\begin{pmatrix}U_1\\-U_1\end{pmatrix},\begin{pmatrix}U_2\\-U_2\end{pmatrix},\dots
\end{equation}
with values in $\R^{2u}$. Because
\begin{equation}\label{symVF}
\begin{split}
&P(U_i\le z,-U_i\le w)\\
=&P(U_i\le z)-\sum_{k=1}^u(-1)^{k+1}\sum_{1\le j_1<\dots<j_k\le u}P(U_{i,j_1}<-w_{j_1},\dots,U_{i,j_k}< -w_{j_k},U_i\le z),\\
&z,w\in\overline\R^u,~i\in\N,
\end{split}
\end{equation}
the uniformly convergence of the sequence of distribution functions $(C_i)_{i\in\N}$ to the uniformly continuous distribution function $C$ implies that the sequence of distribution functions of the random vectors (\ref{vec}) has those limit properties, too. For that reason, the sequence of distribution functions of the random vectors (\ref{vec}) fulfills the model assumptions in the general model described above.

\medskip
Well, consider the one sample case with dimension $m=2u$ and the sequence of random vectors $X_1=(\begin{smallmatrix}U_1\\-U_1\end{smallmatrix}),X_2=(\begin{smallmatrix}U_2\\-U_2\end{smallmatrix}),\dots$ with values in $\R^{m}$. Then, the hypothesis $(\ref{testpr3})$ is equivalent to the hypothesis of homogeneity of the first $u$-dimensional marginal distribution and the last $u$-dimensional marginal distribution about the mixture distribution $\mathbb F_n$.

\medskip
In this situation, the maps $T:\mathcal D(\overline \R^{m})\rightarrow\mathcal D(\overline \R^u)$ and $Q:\mathcal D(\overline \R^{m})\rightarrow \mathcal D(\overline \R^u)$ are given by the continuous maps
\begin{equation*}
T( K)(z)= K(\begin{smallmatrix}z\\\infty_u\end{smallmatrix}),~Q( K)(z)= K(\begin{smallmatrix}\infty_u\\z\end{smallmatrix}),~z\in\overline\R^u,~ K\in \mathcal D(\overline \R^{m}).
\end{equation*}
Let  $ K$ and $ L$ be arbitrary objects from the definition of the uniformly Hadamard differentiability given above. Now, the maps $T$ and $Q$ are uniformly Hadamard differentiable in $ K$ with Hadamard derivative 
\begin{equation*}
\mathrm d T( K)( L)(z)= L(\begin{smallmatrix}z\\\infty_u\end{smallmatrix})~\text{and}~\mathrm d Q( K)( L)(z)= L(\begin{smallmatrix}\infty_u\\z\end{smallmatrix}),~z\in\overline\R^u,
\end{equation*}
respectively. Let $ j:\mathcal D(\overline \R^{m})\rightarrow\mathcal D(\overline \R^{m})$ be the continuous map
\begin{equation*}
j( K)(\begin{smallmatrix}z\\w\end{smallmatrix})=\frac 1 2\big( K(\begin{smallmatrix}z\\w\end{smallmatrix})+ K(\begin{smallmatrix}w\\z\end{smallmatrix})\big),~z,w\in\overline\R^u,~ K\in \mathcal D(\overline \R^{m}).
\end{equation*}
It is $T\circ j=Q\circ j$ and $j( K)= K$ for all $ K\in \mathcal D(\overline \R^{m})$ with $T( K)=Q( K)$. Thus,
\begin{equation*}
\hat{\mathbb J} _{N,m}(\begin{smallmatrix}z\\w\end{smallmatrix})=\frac 1 2 \big(\hat{\mathbb F}_n(\begin{smallmatrix}z\\w\end{smallmatrix}) +\hat{\mathbb F}_n(\begin{smallmatrix} w\\z\end{smallmatrix})\big),~z,w\in\overline\R^u,
\end{equation*}
and
\begin{equation*}
T\big(\hat{\mathbb J} _{N,m}\big)(z)=\frac 1 2\big(\hat{\mathbb C}_n^+(z)+\hat{\mathbb C}_n^-(z)\big),~z\in\overline \R^{u},
\end{equation*}
where 
\begin{equation*}
\hat{\mathbb C}_n^+(z):= \sum_{i=1}^n\alpha_{i,n}\I(U_i\le z),~\hat{\mathbb C}_n^-(z):= \sum_{i=1}^n\alpha_{i,n}\I(-U_i\le z),~z\in\overline\R^u.
\end{equation*}
The test statistics have the expressions
\begin{equation}\label{KS3}
\KS_n=\sqrt n\sup_{z\in\overline\R^u}| \mathbb C_n^+(z)-\mathbb C_n^-(z)|
\end{equation}
and
\begin{equation}\label{CvM3}
\CvM_n=n\int\big( \mathbb C_n^+(z)-\mathbb C_n^-(z)\big)^2\frac 1 2\big(\hat{\mathbb C}_n^++\hat{\mathbb C}_n^-\big)(\mathrm d z).
\end{equation}
In the special case of $m=1$ and  $\alpha_{i,n}=\frac 1 n$, $i=1,\dots,n$, the statistic (\ref{KS3}) is considered in \cite{cha}. Now, testing procedure
\begin{equation}\label{Test3}
\text{``Reject}~\mathrm H_n,~\text{iff}~\KS_n\ge c_{n;1-\alpha}\text{''}~\text{or testing procedure}~\text{``Reject}~\mathrm H_n,~\text{iff}~\CvM_n\ge d_{n;1-\alpha}\text{''} 
\end{equation}
is suggested, where $c_{n;1-\alpha}$ and $d_{n;1-\alpha}$ are calculated under $(F_1,\dots,F_n)=(\hat{ \mathbb J} _{N,m},\dots,\hat{ \mathbb J} _{N,m})$.  Testing procedure (\ref{Test3}),  in particular the statistic (\ref{KS3}) or (\ref{CvM3}), is invariant under the transformation of the data
\begin{equation*}
(z_1,\dots,z_n)\longmapsto(-z_1,\dots,-z_n),~(z_1,\dots,z_n)\in\times_{i=1}^n\R^u.
\end{equation*}
Moreover, $\mathbb U$ is a Gaussian process with expectation function identically equal to zero and
\begin{equation*}
\begin{split}
&\Var\big(U(z)\big)=\Var\big(M_m(\begin{smallmatrix}z\\\infty_u\end{smallmatrix})\big)+\Var\big (M_m(\begin{smallmatrix}\infty_u\\z\end{smallmatrix})\big)-2\Cov\big(M_m(\begin{smallmatrix}z\\\infty_u\end{smallmatrix}),M_m(\begin{smallmatrix}\infty_u\\z\end{smallmatrix})\big)
=2\kappa J(\begin{smallmatrix}z\\\infty_u\end{smallmatrix}),\\
&z\in\overline \R^u.
\end{split}
\end{equation*}
Because $ F$ is uniformly continuous, $ J$ is uniformly continuous, too, and the map $z \mapsto \Var(U(z))$, $z\in\overline \R^u$, does not vanish everywhere or $T( J)$-almost everywhere. For that reason, the distribution function of  $\sup_{z\in\overline\R^u}|U(z)|$ or $\int U^2(z) T(  J)(\mathrm d z)$ is strictly increasing on the non-negative half-line.
\begin{rmk}
\begin{itemize}
\item[a)] Modify the model introduced in Section \ref{einfuhr} in the following way. Assume the error variable $Z_i$ has a centrally symmetric distribution function for all $i\in\N$. Furthermore, the distribution $\mathcal L(Z_i)$ is possibly unknown  for all $i\in\N$. Suppose the user has to treat the testing problem of central symmetry
\begin{equation*}
\mathrm H:\mathcal L(Y_1)=\mathcal L(-Y_1),~\mathrm K:\mathcal L(Y_1)\neq \mathcal L(-Y_1)
\end{equation*}
on the basis of the observations $Y_1+Z_1,\dots,Y_n+Z_n$. Putting $\alpha_{i,n}=\frac 1 n$ and $U_i=Y_i+Z_i$, $i=1,\dots,n$, the test (\ref{Test3}) is applicable to this testing problem. It follows from the results in this section that the test is of asymptotically exact size $\alpha$ and consistent.
\item[b)] The test (\ref{Test3}) is applicable to the hypothesis 
\begin{equation*}
\forall i\in\lbrace 1,\dots,n\rbrace :C_i^+=C_i^-.
\end{equation*}
 Putting $\alpha_{i,n}=\frac 1 n$, $i=1,\dots,n$, the test (\ref{Test3}) is also applicable to the hypothesis 
\begin{equation*}
\exists \pi_n:\lbrace 1,\dots,n\rbrace\rightarrow\lbrace 1,\dots,n\rbrace~\text{bijective}~\forall i\in\lbrace 1,\dots,n\rbrace :C_i^+=C_{\pi_n(i)}^-.
\end{equation*}
The test is of asymptotically exact size $\alpha$ and  consistent with respect to suitable alternatives to this hypotheses. This follows from the results in this section. 
\item[c)] Assume $C_i=C$, $i\in\N$. Putting $\alpha_{i,n}=\frac 1 n$, $i=1,\dots,n$, the hypothesis (\ref{testpr3}) is a common hypothesis of central symmetry
\begin{equation*}
\mathrm H:C^+=C^-,
\end{equation*}
the test statistic (\ref{KS3}) or (\ref{CvM3}) is a common Kolmogorov-Smirnov  or  Cram\'er-von-Mises statistic for testing the hypothesis of central symmetry and the test (\ref{Test3}) is a common  Kolmogorov-Smirnov  or  Cram\'er-von-Mises test for testing the hypothesis of central symmetry in a multivariate setting. In this sense, a generalization of the identically distributed case  is considered.
\item[d)]
If $\alpha_{i,n}=\frac 1 n$, $i=1,\dots,n$, the test statistic (\ref{KS3}) or (\ref{CvM3}) has the form of a  common Kolmogorov-Smirnov  or  Cram\'er-von-Mises type statistic for testing the hypothesis of central symmetry and the test (\ref{Test3}) has the form of a common  Kolmogorov-Smirnov  or  Cram\'er-von-Mises type test for testing the hypothesis of central symmetry in the identically distributed case. I.e., the results in this section imply that the common  Kolmogorov-Smirnov  or  Cram\'er-von-Mises type testing procedure  for testing the hypothesis of central symmetry is applicable if the data come from different distributions and the mentioned conditions are fulfilled.
\end{itemize}
\end{rmk}

\subsection{Testing independence}\label{ind}

Consider the one sample case with dimension $m\ge 2$, $m=k+\ell$, $k,\ell\in\N$, and that the sequence of random vectors $X_1,X_2,\dots$ is given by 
\begin{equation*} 
X_1=\begin{pmatrix}A_1\\B_1\end{pmatrix},X_2=\begin{pmatrix}A_2\\B_2\end{pmatrix},\dots,
\end{equation*}
with $ \R^k$-valued random vectors $A_1,A_2,\dots$ and $ \R^\ell$-valued random vectors $B_1,B_2,\dots$. For $i\in\N$, denote by  $F_i^A$ and $F_i^B$ the distribution functions defined on $\overline\R^k$ and $\overline\R^\ell$ of  $A_i$ and $B_i$, respectively. Suppose $F_1,\dots,F_n$ are unknown, $\alpha_{1,n},\dots,\alpha_{n,n}$ are known and that the user has to verify the hypothesis of independence
\begin{equation}\label{testpr5}
\forall (a,b)\in\overline\R^k\times\overline\R^\ell: \mathbb F_n(\begin{smallmatrix}a\\b\end{smallmatrix})=\mathbb F_n(\begin{smallmatrix}a\\\infty_{\ell}\end{smallmatrix}) \mathbb F_n(\begin{smallmatrix}\infty_k\\b\end{smallmatrix})
\end{equation}
on the basis of the observations $X_1,\dots,X_n$.

\medskip
In this situation, the maps  $T:\mathcal D(\overline \R^{m})\rightarrow \mathcal D(\overline \R^m)$ and $Q:\mathcal D(\overline \R^{m})\rightarrow \mathcal D(\overline \R^m)$ are given by the continuous maps 
\begin{equation*}
T( K)(\begin{smallmatrix}a\\b\end{smallmatrix})= K(\begin{smallmatrix}a\\b\end{smallmatrix}),~Q( K)(\begin{smallmatrix}a\\b\end{smallmatrix})= K(\begin{smallmatrix}a\\\infty_\ell\end{smallmatrix}) K(\begin{smallmatrix}\infty_k\\b\end{smallmatrix}),~(a,b)\in\overline\R^k\times\overline\R^\ell,~ K\in \mathcal D(\overline \R^{m}).
\end{equation*}
Let $ K$ and $L$ be arbitrary objects from the definition of the uniformly Hadamard differentiability given above. Now, the maps $T$ and $Q$ are uniformly Hadamard differentiable in $ K$ with Hadamard derivative 
\begin{equation*}
\mathrm d T( K)( L)(\begin{smallmatrix}a\\b\end{smallmatrix})= L(\begin{smallmatrix}a\\b\end{smallmatrix})~\text{and}~\mathrm d Q( K)( L)(\begin{smallmatrix}a\\b\end{smallmatrix})= K(\begin{smallmatrix}a\\\infty_\ell\end{smallmatrix}) L(\begin{smallmatrix}\infty_k\\b\end{smallmatrix})+ K(\begin{smallmatrix}\infty_k\\b\end{smallmatrix}) L(\begin{smallmatrix}a\\\infty_\ell\end{smallmatrix}),~(a,b)\in\overline\R^k\times\overline\R^\ell,
\end{equation*}
respectively. Let $ j:\mathcal D(\overline \R^{m})\rightarrow\mathcal D(\overline \R^{m})$ be the continuous map
\begin{equation*}
j( K)(\begin{smallmatrix}a\\b\end{smallmatrix})= K(\begin{smallmatrix}a\\\infty_\ell\end{smallmatrix}) K(\begin{smallmatrix}\infty_k\\b\end{smallmatrix}),~(a,b)\in\overline\R^k\times\overline\R^\ell,~ K\in \mathcal D(\overline \R^{m}).
\end{equation*}
Obviously, $T\circ j=Q\circ j$ and $j( K)= K$ for all $ K\in \mathcal D(\overline \R^{m})$ with $T( K)=Q( K)$. Thus,
\begin{equation*}
\hat{ \mathbb J} _{N,m}(\begin{smallmatrix}a\\b\end{smallmatrix})= \hat{\mathbb F}_n(\begin{smallmatrix}a\\\infty_\ell\end{smallmatrix}) \hat{\mathbb F}_n(\begin{smallmatrix}\infty_k\\b\end{smallmatrix})= \hat{\mathbb F}_n^A(a) \hat{\mathbb F}_n^B(b),~(a,b)\in\overline\R^k\times\overline\R^\ell,
\end{equation*}
where 
\begin{equation*}
\hat{\mathbb F}_n^A(a):=\sum_{i=1}^n\alpha_{i,n}\I(A_i\le a),~\hat{\mathbb F}_n^B(b):=\sum_{i=1}^n\alpha_{i,n}\I(B_i\le b),~(a,b)\in\overline\R^k\times\overline\R^\ell.
\end{equation*}
Now, the test statistics have the expressions
\begin{equation}\label{KS5}
\KS_n=\sqrt n\sup_{(a,b)\in\overline\R^k\times\overline\R^\ell}|  \hat{\mathbb F}_n(\begin{smallmatrix}a\\b\end{smallmatrix})-\hat{\mathbb F}_n^A(a)\hat{\mathbb F}_n^B(b)|
\end{equation}
and 
\begin{equation}\label{CvM5}
\CvM_n=n\int\int\big(\hat{\mathbb F}_n(\begin{smallmatrix}a\\b\end{smallmatrix})-\hat{\mathbb F}_n^A(a)\hat{\mathbb F}_n^B(b)\big)^2\hat{\mathbb F}_n^A(\mathrm d a)\hat{\mathbb F}_n^B(\mathrm d b).
\end{equation}
In addition, testing procedure
\begin{equation}\label{Test5}
\text{``Reject}~\mathrm H_n,~\text{iff}~\KS_n\ge c_{n;1-\alpha}\text{''}~\text{or testing procedure}~\text{``Reject}~\mathrm H_n,~\text{iff}~\CvM_n\ge d_{n;1-\alpha}\text{''} 
\end{equation}
is suggested, where $c_{n;1-\alpha}$ and $d_{n;1-\alpha}$ are calculated under $(F_1,\dots,F_n)=(\hat{ \mathbb J} _{N,m},\dots,\hat{ \mathbb J} _{N,m})$.  If $k=\ell$, testing procedure (\ref{Test5}),  in particular the statistic (\ref{KS5}) or (\ref{CvM5}), is invariant under the transformation of the data
\begin{equation*}
\begin{split}
\big((a_1,b_1),\dots,(a_n,b_n)\big)\longmapsto\big((b_1,a_1),\dots,(b_n,a_n)\big),~\big((a_1,b_1),\dots,(a_n,b_n)\big)\in\times_{i=1}^n(\R^k\times\R^k).
\end{split}
\end{equation*}
Moreover, $\mathbb U$ is a Gaussian process with expectation function identically equal to zero. Moreover, a calculation yields
\begin{equation*}
\begin{split}
\Var\big(U(\begin{smallmatrix}a\\b\end{smallmatrix})\big)=&\Var\big(M_m(\begin{smallmatrix}a\\b\end{smallmatrix})\big)+ J^2 (\begin{smallmatrix}a\\\infty_\ell\end{smallmatrix})\Var\big(M_m(\begin{smallmatrix}\infty_k\\b\end{smallmatrix})\big)+ J^2 (\begin{smallmatrix}\infty_k\\b\end{smallmatrix})\Var\big(M_m(\begin{smallmatrix}a\\\infty_\ell\end{smallmatrix})\big)
\\
&-2 J (\begin{smallmatrix}a\\\infty_\ell\end{smallmatrix})\Cov\big(M_m(\begin{smallmatrix}a\\b\end{smallmatrix}),M_m(\begin{smallmatrix}\infty_k\\b\end{smallmatrix})\big)
-2 J (\begin{smallmatrix}\infty_k\\b\end{smallmatrix})\Cov\big(M_m(\begin{smallmatrix}a\\b\end{smallmatrix}),M_m(\begin{smallmatrix}a\\\infty_\ell\end{smallmatrix})\big)\\
&+2 J (\begin{smallmatrix}a\\\infty_\ell\end{smallmatrix}) J (\begin{smallmatrix}\infty_k\\b\end{smallmatrix})\Cov\big(M_m(\begin{smallmatrix}\infty_k\\b\end{smallmatrix}),M_m(\begin{smallmatrix}a\\\infty_\ell\end{smallmatrix})\big)\\
=& J (\begin{smallmatrix}a\\\infty_\ell\end{smallmatrix})\big(1- J (\begin{smallmatrix}a\\\infty_\ell\end{smallmatrix})\big) J (\begin{smallmatrix}\infty_k\\b\end{smallmatrix})\big(1- J (\begin{smallmatrix}\infty_k\\b\end{smallmatrix})\big),~(a,b)\in\overline\R^k\times\overline\R^\ell.
\end{split}
\end{equation*}
Because $F$ is uniformly continuous, $J$ is uniformly continuous, too, and the map $(\begin{smallmatrix}a\\b\end{smallmatrix}) \mapsto \Var(U(\begin{smallmatrix}a\\b\end{smallmatrix}))$, $(a,b)\in\overline \R^k\times\overline\R^\ell$, does not vanish everywhere or $T( J)$-almost everywhere. For that reason, the distribution function of  $\sup_{(a,b)\in\overline\R^k\times\overline\R^l}|U(\begin{smallmatrix}a\\b\end{smallmatrix})|$ or $\int U^2(\begin{smallmatrix}a\\b\end{smallmatrix}) T(  J)(\mathrm d (\begin{smallmatrix}a\\b\end{smallmatrix}))$ is strictly increasing on the non-negative half-line.
\begin{rmk}
\begin{itemize}
\item[a)] Modify the model introduced in Section \ref{einfuhr} in the following way. Replace the sequence of random variables $Y_1,Y_2,\dots$ with a sequence of independent and identically distributed $\R\times\R$-valued random vectors $(Y_1,W_1),(Y_2,W_2)$ and assume this sequence is still independent of  $Z_1,Z_2,\dots$. Now, the distribution of the error variable $\mathcal L(Z_i)$ and the error function $e_i$ are possibly unknown for all $i\in\N$.  Suppose the user has to treat the testing problem of independence
\begin{equation*}
\mathrm H:\mathcal L(Y_1,W_1)=\mathcal L(Y_1)\otimes\mathcal L(W_1),~\mathrm K:\mathcal L(Y_1,W_1)\neq\mathcal L(Y_1)\otimes\mathcal L(W_1)
\end{equation*}
on the basis of the observations $(e_1(Y_1,Z_1),W_1),\dots,(e_n(Y_n,Z_n),W_n)$. Putting \linebreak $\alpha_{i,n}=\frac 1 n$ and $X_i=(e_i(Y_i,Z_i),W_i)$, $i=1,\dots,n$, the test (\ref{Test5}) is applicable to this testing problem. If the distribution function of $(e_i(Y_i,Z_i),W_i)$ converges uniformly on $\R\times\R$ to the distribution function of $(e(Y_1,Z),W_1)$ and the distribution function of $(e(Y_1,Z),W_1)$ is uniformly continuous, it follows from the results in this section that the test is of asymptotically exact size $\alpha$ and consistent.
\item[b)] Assume $F_1^B=\dots=F_n^B$. Then,  the test (\ref{Test5}) is applicable to the hypothesis 
\begin{equation*}
\forall i\in\lbrace 1,\dots,n\rbrace~\forall(a,b)\in\overline\R^k\times\overline\R^\ell  :F_i(\begin{smallmatrix}a\\b\end{smallmatrix})=F_i^A(a)F_i^B(b).
\end{equation*}
The test is of asymptotically exact size $\alpha$ and consistent with respect to suitable alternatives to this hypothesis. This follows from the results in this section. 
\item[c)] Assume $F_i=F$, $i\in\N$. Putting $\alpha_{i,n}=\frac 1 n$, $i=1,\dots,n$, the hypothesis (\ref{testpr5}) is a common hypothesis of independence
\begin{equation*}
\mathrm H:\forall(a,b)\in\overline\R^k\times\overline\R^\ell  :F(\begin{smallmatrix}a\\b\end{smallmatrix})=F^A(a)F^B(b),
\end{equation*}
the test statistic (\ref{KS5}) or (\ref{CvM5}) is a common Kolmogorov-Smirnov  or  Cram\'er-von-Mises statistic for testing the hypothesis of independence and the test (\ref{Test5}) is a common  Kolmogorov-Smirnov  or  Cram\'er-von-Mises test for testing the hypothesis of independence in a multivariate setting. In this sense, a generalization of the identically distributed case  is considered.
\item[d)] If $\alpha_{i,n}=\frac 1 n$, $i=1,\dots,n$, the test statistic (\ref{KS5}) or (\ref{CvM5}) has the form of a  common Kolmogorov-Smirnov  or  Cram\'er-von-Mises type statistic for testing the hypothesis of independence and the test (\ref{Test5}) has the form of a common  Kolmogorov-Smirnov  or  Cram\'er-von-Mises type test for testing the hypothesis of independence in the identically distributed case. I.e., the results in this section imply that the common  Kolmogorov-Smirnov  or  Cram\'er-von-Mises type testing procedure  for testing the hypothesis of independence is applicable if $A_1,A_2,\dots$ come from different distributions, $B_1,B_2,\dots$ are identically distributed and the mentioned conditions are fulfilled.
\end{itemize}
\end{rmk}

\section{Simulations}
Empirical results for the probabilities of the error of the first kind and the power values of the mentioned tests for finite sample sizes are presented. The simulation based on 1000 replications and the Monte-Carlo procedures based on 500 replications.
\subsection{Goodness-of-fit with hypotheses given by a specific distribution}\label{ex1}
Assume $m=1$, $\alpha_{i,n}=\frac 1 n$, $i=1,\dots,n$, and consider the distribution function $\ell(\mu_i,1)$ of the logistic distribution  with parameters $\mu_i\in\R$ and $1$, $i\in\N$. Suppose $(\mu_i)_{i\in\N}$ is known and there exists a $\mu\in \R$ with $\lim_{i\rightarrow\infty}\mu_i=\mu$. The upper part of Table \ref{tabelle1} shows the empirical error probabilities of the first kind in the case of $F_i=\ell(\frac{1}{\log(i+1)},1)$ and $ G_n=\frac 1 n \sum_{i=1}^n\ell(\frac{1}{\log(i+1)},1)$, $F_i=\ell(\frac{1}{\sqrt i},1)$ and $ G_n=\frac 1 n \sum_{i=1}^n\ell(\frac{1}{\sqrt i},1)$ or $F_i=\ell(\frac{1}{ i},1)$ and $ G_n=\frac 1 n \sum_{i=1}^n\ell(\frac{1}{ i},1)$, $i\in\N$. The lower part of Table \ref{tabelle1} shows the empirical power values in the case of $F_i=\mathrm L(-\frac{1}{ i},\frac 1 2)$ (Laplace distribution) and $ G_n=\frac 1 n \sum_{i=1}^n\ell(\frac{1}{\log( i+1)},1)$,  $F_i=\mathrm C(-\frac{1}{\log (i+1)},\frac12)$ (Cauchy distribution) and $ G_n=\frac 1 n \sum_{i=1}^n\ell(\frac{1}{\sqrt i},1)$, or $F_i=\mathrm N(-\frac{1}{ \log(i+1)},\frac12)$ (normal distribution) and $ G_n=\frac 1 n \sum_{i=1}^n\ell(\frac{1}{ i},1)$, $i\in\N$. 
\begin{table}
\begin{center}
\setlength{\tabcolsep}{2pt}
\begin{tabular}{cccccccccc}
&&&\multicolumn{2}{c}{$\alpha=0.025$}& \multicolumn{2}{c}{$\alpha=0.05$}&\multicolumn{2}{c}{$\alpha=0.1$} \\
$F_i$&$ G_n$&& KS & CvM   &  KS & CvM  & KS & CvM \\ 
\hline
\multirow{2}{*}{$\ell(\frac{1}{\log (i+1)},1)$}&\multirow{2}{*}  {$\frac 1 n \sum_{i=1}^n\ell(\frac{1}{\log (i+1)},1)$} &$n=25$ & 0.033 & 0.033 & 0.049 & 0.049 & 0.097 & 0.097\\
&&$n=50$& 0.026 & 0.026 & 0.044 & 0.044 & 0.096 & 0.096  \\
\hline
\multirow{2}{*}{$\ell(\frac{1}{\sqrt i},1)$}&\multirow{2}{*} {$\frac 1 n \sum_{i=1}^n\ell(\frac{1}{\sqrt i},1)$} &$n=25$ & 0.032 & 0.032 & 0.051 & 0.051 & 0.105 & 0.105\\
&&$n=50$& 0.026 & 0.026 & 0.066 & 0.066 & 0.106 & 0.106  \\
\hline
\multirow{2}{*}{$\ell(\frac{1}{ i},1)$}&\multirow{2}{*} {$\frac 1 n \sum_{i=1}^n\ell(\frac{1}{ i},1)$} &$n=25$ & 0.030 & 0.030 & 0.049 & 0.049 & 0.102 & 0.102\\
&&$n=50$& 0.023 & 0.023 & 0.051 & 0.051 & 0.109 & 0.109  \\
\hline
\multirow{2}{*} {$\mathrm L(-\frac{1}{ i},\frac 1 2)$ } &\multirow{2}{*}{$\frac 1 n \sum_{i=1}^n\ell(\frac{1}{\log( i+1)},1)$}&$n=25$& 0.510 & 0.510 & 0.715 & 0.715 & 0.879 & 0.879\\
&&$n=50$& 0.640 & 0.640& 0.804 & 0.804 & 0.928 & 0.928  \\
\hline
\multirow{2}{*} {$\mathrm C(-\frac{1}{\log (i+1)},\frac12)$}&\multirow{2}{*}{$\frac 1 n \sum_{i=1}^n\ell(\frac{1}{\sqrt i},1)$} &$n=25$& 0.780 & 0.780 & 0.862 & 0.862 & 0.931 & 0.931\\
&&$n=50$& 0.732 & 0.732 & 0.837 & 0.837 & 0.913 & 0.913    \\
\hline
\multirow{2}{*} {$\mathrm N(-\frac{1}{ \log(i+1)},\frac12)$}&\multirow{2}{*}{$\frac 1 n \sum_{i=1}^n\ell(\frac{1}{ i},1)$} &$n=25$& 0.805 & 0.805 & 0.954 & 0.954 & 0.991 & 0.991\\
&&$n=50$& 0.750 & 0.750 & 0.909 & 0.909 & 0.991 & 0.991   \\
\end{tabular}
\caption{\label{tabelle1}Simulation results for goodness-of-fit with hypotheses given by a specific distribution.}
\end{center}
\end{table}

\subsection{Goodness-of-fit with hypotheses given by a family of distributions}\label{ex2}
Let $m=d=1$, $\Theta=(0,\infty)$, $ \alpha_{i,n}=\frac 1 n$, $i=1,\dots,n$, and consider the distribution function Exp$(\vartheta+\mu_i)$ of the exponential distribution  with rate $(\vartheta+\mu_i)\in (0,\infty)$, $i\in\N$. Suppose $(\mu_i)_{i\in\N}$ is known and there exists a $\mu\in \R$ with $\lim_{i\rightarrow\infty}\mu_i=\mu$ and $\vartheta+\mu>0$. An estimator introduced in Remark \ref{bem3} is used. The upper part of Table \ref{tabelle3} shows the empirical error probabilities of the first kind in the case of $F_i=\operatorname{Exp}(1+\frac{1}{\log (i+1)})$ and $ G_n(\cdot,\vartheta)=\frac 1 n \sum_{i=1}^n \operatorname{Exp}(\vartheta+\frac{1}{\log (i+1)})$, $F_i=\operatorname{Exp}(2+\frac{1}{\sqrt i})$ and $ G_n(\cdot,\vartheta)=\frac 1 n \sum_{i=1}^n \operatorname{Exp}(\vartheta+\frac{1}{\sqrt i})$ or  $F_i=\operatorname{Exp}(3+\frac{1}{ i})$ and $ G_n(\cdot,\vartheta)=\frac 1 n \sum_{i=1}^n \operatorname{Exp}(\vartheta+\frac{1}{ i})$, $i\in\N$.  The lower part of Table \ref{tabelle3} shows the empirical power values in the case of $F_i=\mathrm W(1+\frac{1}{\log(1+i)},1)$ (Weibull distribution) and $ G_n(\cdot,\vartheta)=\frac 1 n \sum_{i=1}^n \operatorname{Exp}(\vartheta+\frac{1}{\log (i+1)})$, $F_i=\operatorname{IG}(\frac 2 3,1+\frac{1}{\sqrt i})$ (inverse Gaussian distribution) and $ G_n(\cdot,\vartheta)=\frac 1 n \sum_{i=1}^n \operatorname{Exp}(\vartheta+\frac{1}{\sqrt i})$, or $F_i=\mathrm G(\frac 1 2,\frac{1}{1+\frac{1}{i}})$ (gamma distribution) and $ G_n(\cdot,\vartheta)=\frac 1 n \sum_{i=1}^n \operatorname{Exp}(\vartheta+\frac{1}{ i})$,  $i\in\N$.
\begin{table}
\begin{center}
\setlength{\tabcolsep}{2pt}
\begin{tabular}{cccccccccc}
&&&\multicolumn{2}{c}{$\alpha=0.025$}& \multicolumn{2}{c}{$\alpha=0.05$}&\multicolumn{2}{c}{$\alpha=0.1$} \\
$F_i$&$ G_n(\cdot,\vartheta)$&& KS & CvM   &  KS & CvM  & KS & CvM \\ 
\hline
\multirow{2}{*}{$\operatorname{Exp}(1+\frac{1}{\log (i+1)})$}&\multirow{2}{*}  {$\frac 1 n \sum_{i=1}^n \operatorname{Exp}(\vartheta+\frac{1}{\log (i+1)})$} &$n=25$ & 0.035 & 0.035 & 0.066 & 0.066 & 0.102 & 0.102\\
&&$n=50$ & 0.024 & 0.024 & 0.039 & 0.039 & 0.083 & 0.083  \\
\hline
\multirow{2}{*}{$\operatorname{Exp}(2+\frac{1}{\sqrt i})$}&\multirow{2}{*} {$\frac 1 n \sum_{i=1}^n \operatorname{Exp}(\vartheta+\frac{1}{\sqrt i})$} &$n=25$ & 0.025 & 0.025 & 0.048 & 0.048 & 0.088 & 0.088\\
&&$n=50$& 0.025 & 0.025 & 0.049 & 0.049 & 0.096 & 0.096 \\
\hline
\multirow{2}{*}{$\operatorname{Exp}(3+\frac{1}{ i})$}&\multirow{2}{*} {$\frac 1 n \sum_{i=1}^n \operatorname{Exp}(\vartheta+\frac{1}{ i})$} &$n=25$ & 0.029 & 0.029 & 0.055 & 0.055 & 0.101 & 0.101\\
&&$n=50$& 0.018 & 0.018 & 0.045 & 0.045 & 0.087 & 0.087 \\
\hline
\multirow{2}{*} {$\mathrm W(1+\frac{1}{\log(1+i)},1)$ } &\multirow{2}{*}{$\frac 1 n \sum_{i=1}^n \operatorname{Exp}(\vartheta+\frac{1}{\log (i+1)})$}&$n=25$ & 0.241 & 0.241 & 0.386 & 0.386 & 0.584 & 0.584\\
&&$n=50$& 0.257 & 0.257 & 0.421 & 0.421 & 0.622 & 0.622  \\
\hline
\multirow{2}{*} {$\operatorname{IG}(\frac 2 3,1+\frac{1}{\sqrt i})$}&\multirow{2}{*}{$\frac 1 n \sum_{i=1}^n \operatorname{Exp}(\vartheta+\frac{1}{\sqrt i})$} &$n=25$ & 0.546 & 0.546 & 0.722 & 0.722 & 0.833 & 0.833\\
&&$n=50$ & 0.796 & 0.796 & 0.902 & 0.902 & 0.955 & 0.955\\
\hline
\multirow{2}{*} {$\mathrm G(\frac 1 2,\frac{1}{1+\frac{1}{i}})$}&\multirow{2}{*}{$\frac 1 n \sum_{i=1}^n \operatorname{Exp}(\vartheta+\frac{1}{ i})$} &$n=25$ & 0.628 & 0.628 & 0.731 & 0.731 & 0.803 & 0.803\\
&&$n=50$& 0.870 & 0.870 & 0.910 & 0.910 & 0.939 & 0.939\\
\end{tabular}
\caption{\label{tabelle3}Simulation results for goodness-of-fit with hypotheses given by a family of distributions.}
\end{center}
\end{table}

\subsection{Homogeneity}\label{ex4}
Let $m=1$ and $\alpha_{i,n}=\beta_{i,n}=\frac 1 n$, $i=1,\dots,n$. The upper part of Table \ref{tabelle7} shows the empirical error probabilities of the first kind in the case of $F_i=G_i=\mathrm W(1+\frac{1}{\log(1+i)},1)$  (Weibull distribution), $F_i=G_i=\operatorname{IG}(\frac 2 3,1+\frac{1}{\sqrt i})$  (inverse Gaussian distribution), or $F_i=G_i=\operatorname{Exp}(1+\frac{1}{ i})$ (exponential distribution),  $i\in\N$. The lower  part of Table \ref{tabelle7} shows the empirical power values in the case of  $F_i=\operatorname{Exp}(1+\frac{1}{ i})$ and  $G_i=\mathrm W(1+\frac{1}{i},1/3)$, $F_i=\operatorname{IG}( 2 ,1+\frac{1}{\sqrt i})$ and  $G_i=\mathrm G(\frac 1 2,\frac{1}{1+\frac{1}{\sqrt i}})$ (gamma distribution), or $F_i=\operatorname{IG}( \frac 1 2 ,1+\frac{1}{\log (i+1)})$ and  $ G_i=\mathrm W(1+\frac{1}{\log(i+1)},\frac 1 2)$,  $i\in\N$.
\begin{table}
\begin{center}
\setlength{\tabcolsep}{2pt}
\begin{tabular}{cccccccccc}
&&&\multicolumn{2}{c}{$\alpha=0.025$}& \multicolumn{2}{c}{$\alpha=0.05$}&\multicolumn{2}{c}{$\alpha=0.1$} \\
$F_i$&$G_i$&& KS & CvM   &  KS & CvM  & KS & CvM \\ 
\hline
\multirow{2}{*}{$\mathrm W(1+\frac{1}{\log(1+i)},1)$}&\multirow{2}{*}  {$\mathrm W(1+\frac{1}{\log(1+i)},1)$} &$n=25$ & 0.026 & 0.026 & 0.056 & 0.052 & 0.110 & 0.102\\
&&$n=50$ & 0.026 & 0.030 & 0.047 & 0.050 & 0.093 & 0.098\\
\hline
\multirow{2}{*}{$\operatorname{IG}(\frac 2 3,1+\frac{1}{\sqrt i})$}&\multirow{2}{*} {$\operatorname{IG}(\frac 2 3,1+\frac{1}{\sqrt i})$} &$n=25$ & 0.031 & 0.027 & 0.053 & 0.055 & 0.101 & 0.106\\
&&$n=50$& 0.026 & 0.024 & 0.045 & 0.051 & 0.097 & 0.109 \\
\hline
\multirow{2}{*}{$\operatorname{Exp}(1+\frac{1}{ i})$}&\multirow{2}{*} {$\operatorname{Exp}(1+\frac{1}{ i})$} &$n=25$ & 0.022 & 0.021 & 0.048 & 0.040 & 0.104 & 0.088\\
&&$n=50$& 0.028 & 0.023 & 0.053 & 0.059 & 0.106 & 0.105 \\
\hline
\multirow{2}{*}{$\operatorname{Exp}(1+\frac{1}{ i})$}&\multirow{2}{*}  {$\mathrm W(1+\frac{1}{i},1/3)$} &$n=25$ & 0.266 & 0.242 & 0.363 & 0.365 & 0.566 & 0.555\\
&&$n=50$ & 0.622 & 0.571 & 0.762 & 0.759 & 0.876 & 0.891 \\
\hline
\multirow{2}{*}{$\operatorname{IG}( 2 ,1+\frac{1}{\sqrt i})$}&\multirow{2}{*} {$\mathrm G(\frac 1 2,\frac{1}{1+\frac{1}{\sqrt i}})$} &$n=25$ & 0.555 & 0.504 & 0.686 & 0.659 & 0.787 & 0.799\\
&&$n=50$& 0.899 & 0.932 & 0.964 & 0.979 & 0.990 & 0.997 \\
\hline
\multirow{2}{*}{$\operatorname{IG}( \frac 1 2 ,1+\frac{1}{\log (i+1)})$}&\multirow{2}{*} {$\mathrm W(1+\frac{1}{\log(i+1)},\frac 1 2)$} &$n=25$& 0.075 & 0.075 & 0.119 & 0.131 & 0.224 & 0.218\\
&&$n=50$& 0.281 & 0.235 & 0.389 & 0.361 & 0.569 & 0.556 \\
\end{tabular}
\caption{\label{tabelle7}Simulation results for homogeneity.}
\end{center}
\end{table}

\subsection{Central symmetry}\label{ex3}
Assume $m=1$ and $\alpha_{i,n}=\frac 1 n$, $i=1,\dots,n$. The upper part of Table \ref{tabelle5} shows the empirical error probabilities of the first kind in the case of $C_i=\mathrm N(0,1+\frac{1}{ \log(i+1)})$ (normal distribution), $C_i=\ell(0, \frac 1 2 +\frac{1}{\sqrt i})$ (logistic distribution), or $C_i=\mathrm C(0,2+\frac{1}{ i})$ (Cauchy distribution),  $i\in\N$. The lower part of Table \ref{tabelle5} shows the empirical power values in the case of $C_i=\mathrm N(\frac 1 2,1+\frac{1}{ \log(i+1)})$, $C_i=\ell(\frac 1 3, \frac 1 2 +\frac{1}{\sqrt i})$, or $C_i=\mathrm C(\frac 3 2,2+\frac{1}{ i})$,  $i\in\N$.
\begin{table}
\begin{center}
\setlength{\tabcolsep}{2pt}
\begin{tabular}{ccccccccc}
&&\multicolumn{2}{c}{$\alpha=0.025$}& \multicolumn{2}{c}{$\alpha=0.05$}&\multicolumn{2}{c}{$\alpha=0.1$} \\
$C_i$&& KS & CvM   &  KS & CvM  & KS & CvM \\ 
\hline
\multirow{2}{*}{$\mathrm N(0,1+\frac{1}{ \log(i+1)})$} &$n=25$ & 0.020 & 0.025 & 0.038 & 0.045 & 0.083 & 0.087\\
&$n=50$& 0.027 & 0.028 & 0.054 & 0.053 & 0.095 & 0.098 \\
\hline
\multirow{2}{*}{$\ell(0, \frac 1 2 +\frac{1}{\sqrt i})$} &$n=25$ & 0.033 & 0.037 & 0.055 & 0.063 & 0.088 & 0.105\\
&$n=50$& 0.024 & 0.016 & 0.045 & 0.044 & 0.102 & 0.101
 \\
\hline
\multirow{2}{*}{$\mathrm C(0,2+\frac{1}{ i})$}&$n=25$ & 0.016 & 0.023 & 0.035 & 0.046 & 0.071 & 0.089\\
&$n=50$& 0.036 & 0.035 & 0.059 & 0.059 & 0.104 & 0.111  \\
\hline
\multirow{2}{*}{$\mathrm N(\frac 1 2,1+\frac{1}{ \log(i+1)})$} &$n=25$& 0.210 & 0.248 & 0.320 & 0.374 & 0.422 & 0.472\\
&$n=50$& 0.491 & 0.581 & 0.620 & 0.699 & 0.729 & 0.787  \\
\hline
\multirow{2}{*}{$\ell(\frac 1 3, \frac 1 2 +\frac{1}{\sqrt i})$} &$n=25$ & 0.135 & 0.154 & 0.201 & 0.230 & 0.284 & 0.324\\
&$n=50$& 0.308 & 0.364 & 0.421 & 0.468 & 0.530 & 0.593 \\
\hline
\multirow{2}{*}{$\mathrm C(\frac 3 2,2+\frac{1}{ i})$}&$n=25$  & 0.379 & 0.346 & 0.493 & 0.443 & 0.604 & 0.583
\\
&$n=50$ & 0.758 & 0.705 & 0.848 & 0.800 & 0.904 & 0.873  \\
\end{tabular}
\caption{\label{tabelle5}Simulation results for central symmetry.}
\end{center}
\end{table}

\subsection{Independence}\label{ex5}
Assume $m=2$, $k=\ell=1$ and $\alpha_{i,n}=\frac 1 n$, $i=1,\dots,n$. The upper part of Table \ref{tabelle9} shows the empirical error probabilities of the first kind in the case of $F_i=\mathrm N(\frac 1 i,1)\otimes\mathrm N(0,1)$ (product of normal distributions), $F_i=\ell(\frac{1}{\sqrt i},  1 )\otimes\ell(0,  1 )$ (product of logistic distributions), or $F_i=\mathrm C(\frac{1}{\log(1+ i)},1)\otimes \mathrm C(0,1)$ (product of Cauchy distributions),  $i\in\N$. The lower part of Table \ref{tabelle9} shows the empirical power values in the case of $F_i=\mathrm N_2(\frac 1 i (\begin{smallmatrix}1\\ 1\end{smallmatrix}),(\begin{smallmatrix}2& 1 \\ 1 &2\end{smallmatrix}))$ (bivariate normal distribution), $F_i=t_1( \frac{1}{\sqrt i}(\begin{smallmatrix}1\\ 1\end{smallmatrix}),(\begin{smallmatrix}2& 1 \\ 1 &2\end{smallmatrix}))$ (bivariate $t$-distribution), or $F_i=\ell_2(\frac{1}{\log( i+1)},  1,\frac{1}{\log( i+1)},  1 )$ (bivariate logistic distribution),  $i\in\N$. 
\begin{table}
\begin{center}
\setlength{\tabcolsep}{2pt}
\begin{tabular}{ccccccccc}
&&\multicolumn{2}{c}{$\alpha=0.025$}& \multicolumn{2}{c}{$\alpha=0.05$}&\multicolumn{2}{c}{$\alpha=0.1$} \\
$F_i$&& KS & CvM   &  KS & CvM  & KS & CvM \\ 
\hline
\multirow{2}{*}{$\mathrm N(\frac 1 i,1)\otimes\mathrm N(0,1)$} &$n=25$ & 0.047 & 0.027 & 0.088 & 0.056 & 0.170 & 0.115\\
&$n=50$& 0.035 & 0.024 & 0.070 & 0.052 & 0.137 & 0.105 \\
\hline
\multirow{2}{*}{$\ell(\frac{1}{\sqrt i},  1 )\otimes\ell(0,  1 )$} &$n=25$ & 0.040 & 0.023 & 0.068 & 0.046 & 0.134 & 0.083\\
&$n=50$& 0.034 & 0.018 & 0.066 & 0.045 & 0.119 & 0.104
 \\
\hline
\multirow{2}{*}{$\mathrm C(\frac{1}{\log(1+ i)},1)\otimes\mathrm  C(0,1)$}&$n=25$& 0.052 & 0.028 & 0.095 & 0.057 & 0.160 & 0.110\\
&$n=50$& 0.034 & 0.016 & 0.071 & 0.053 & 0.130 & 0.100
  \\
\hline
\multirow{2}{*}{$\mathrm N_2(\frac 1 i (\begin{smallmatrix}1\\ 1\end{smallmatrix}),(\begin{smallmatrix}2& 1 \\ 1 &2\end{smallmatrix}))$} &$n=25$& 0.487 & 0.548 & 0.600 & 0.642 & 0.720 & 0.730\\
&$n=50$& 0.725 & 0.851 & 0.816 & 0.908 & 0.895 & 0.945  \\
\hline
\multirow{2}{*}{$t_1( \frac{1}{\sqrt i}(\begin{smallmatrix}1\\ 1\end{smallmatrix}),(\begin{smallmatrix}2& 1 \\ 1 &2\end{smallmatrix}))$} &$n=25$& 0.664 & 0.678 & 0.786 & 0.787 & 0.871 & 0.870\\
&$n=50$& 0.985 & 0.987 & 0.992 & 0.993 & 0.998 & 0.997\\
\hline
\multirow{2}{*}{$\ell_2(\frac{1}{\log( i+1)},  1,\frac{1}{\log( i+1)},  1 )$}&$n=25$ & 0.965 & 0.990 & 0.986 & 0.997 & 0.997 & 0.999
\\
&$n=50$& 1.000 & 1.000 & 1.000 & 1.000 & 1.000 & 1.000\\
\end{tabular}
\caption{\label{tabelle9}Simulation results for independence.}
\end{center}
\end{table}

\section{Poofs}

\begin{rmk} \label{lem-1}
Let $\gamma_{1,n},\dots,\gamma_{n,n}$ be a real numbers with
\begin{equation*}
\gamma_{i,n}\ge 0,~i=1,\dots,n,~\lim_{n\rightarrow\infty}\max_{1\le i\le n}\gamma_{i,n}=0,~\lim_{n\rightarrow\infty}\sum_{i=1}^n\gamma_{i,n}=\gamma\in[1,\infty),
\end{equation*}
and let $r_{1,n},\dots,r_{n,n}$ be other real numbers such that for all $n\in\N$ $|r_{i,n}|<c\in(0,\infty)$, $i=1,\dots,n$, and 
\begin{equation*}
\forall \varepsilon>0~\exists i_\varepsilon\in\N~\exists n_\varepsilon\in\N,~i_\varepsilon\le n_\varepsilon,~\forall i>i_\varepsilon~\forall n>n_\varepsilon,~i\le n:|r_{i,n}|\le \varepsilon.
\end{equation*}
Then, 
\begin{equation*}
\lim_{n\rightarrow\infty}\sum_{i=1}^n\gamma_{i,n}r_{i,n}=0.
\end{equation*}
\end{rmk}
\begin{proof}[\bf Proof of Lemma \ref{lem0}]
Define another triangular array $\eta_{1,n},\dots,\eta_{n,n}$ of  row-wise independent  random vectors  with values in  $\mathcal X:=\R^m\times[0,\infty)$ by $\eta_{1,n}:=(\xi_{1,n},n\alpha_{1,n}),\dots,\eta_{n,n}:=(\xi_{n,n},n\alpha_{n,n})$. Regard the process $\mathbb W_n:=(W_n(x);x\in\overline\R^m)$ as a process  $\widetilde {\mathbb W}_n:=(\widetilde { W}_n(f);f \in \mathcal V)$, where 
\begin{equation*}
\mathcal V:=\lbrace f; f:\mathcal X\rightarrow \R, f(w,u)=u\I(w\le x), (w,u)\in\mathcal X,x\in \overline\R^m\rbrace
\end{equation*}
and
\begin{equation*}
\widetilde { W}_n(f):=\frac{1}{\sqrt n}\bigg( \sum_{i=1}^nf(\eta_{i,n})- \sum_{i=1}^n\E\big(f(\eta_{i,n})\big)\bigg),~f\in\mathcal V.
\end{equation*}
It is possible to follow the argumentation in 4.2 in \cite{zie} and to check the conditions mentioned there in order to show that the process $\widetilde {\mathbb W}_n $ is asymptotically equicontinuous. By Dudley \cite{dud}, $\lbrace \lbrace (w,u)\in\mathcal X;w_j-x>0\rbrace; x\in\overline\R\rbrace$ is a Vapnik-\v{C}hervonenkis class, $j=1,\dots,m$.  For definition and details of Vapnik-\v{C}hervonenkis classes, see \cite{van}. Using Lemma 2.6.17 (i) and (ii) in \cite{van}, it follows that $\mathcal C:=\lbrace \lbrace (w,u)\in\mathcal X;w\le x\rbrace; x\in\overline\R^m\rbrace$  is a Vapnik-\v{C}hervonenkis class, too. Consequently, $\mathcal H:=\lbrace h; h:\mathcal X\rightarrow \R,h(w,u)=\I((w,u)\in C), (w,u)\in\mathcal X,C\in\mathcal C\rbrace=\lbrace h; h:\mathcal X\rightarrow \R,h(w,u)=\I(w\le x), (w,u)\in\mathcal X,x\in \overline\R^m\rbrace$ is a Vapnik-\v{C}hervonenkis graph class. Putting $g:\mathcal X\rightarrow \R$, $g(w,u):=u$, $(w,u)\in\mathcal X$, Lemma 2.16.18 (vi) in \cite{van} implies that $g\cdot\mathcal H:=\lbrace gh;h\in\mathcal H\rbrace$ is  a Vapnik-\v{C}hervonenkis graph class, too. Finally, $\mathcal V=g\cdot\mathcal H$ is a Vapnik-\v{C}hervonenkis graph class.

\medskip
Let $E$ be the distribution function of the exponential distribution with rate parameter one. The map
\begin{equation*}
d(f_1,f_2):=\int|f_1(z)-f_2(z)|(\Phi\otimes E)(\mathrm d z),~f_1,f_2\in\mathcal V,
\end{equation*}
defines a metric on $\mathcal V$ and  $(\mathcal V,d)$ is totally bounded. In fact,
\begin{equation*}
\begin{split}
\rho(x,y)&=\int|f_1(z)-f_2(z)|(\Phi\otimes E)(\mathrm d z)=d(f_1,f_2),\\
&f_1(w,u):=u\I(w\le x),~f_2(w,u):=u\I(w\le y),~(w,u)\in\mathcal X,~x,y\in \overline\R^m.
\end{split}
\end{equation*}
Clearly, $\mathcal V$ is a class of  functions with envelope $g$ given above. Moreover, from Lemma 2.4 in \cite{zie}, $\mathcal V$ has uniformly integrable entropy. (\ref{seq}) yields
\begin{equation*}
\sup_{n\in\N}\frac 1 n\sum_{i=1}^n\E\big(g^2(\xi_{i,n},n\alpha_{i,n})\big)=\sup_{n\in\N}\sum_{i=1}^nn\alpha_{i,n}^2<\infty.
\end{equation*}
Remark \ref{lem-1} and (\ref{seq}) imply
\begin{equation*}
\begin{split}
&\limsup_{n\rightarrow\infty}\sup_{f_1,f_2\in\mathcal V,d(f_1,f_2)\le \delta}\sqrt{\frac 1 n\sum_{i=1}^n\E\Big(\big(f_1(\xi_{i,n},n\alpha_{i,n})-f_2(\xi_{i,n},n\alpha_{i,n})\big)^2\Big)}\\
\le&\sqrt{\sup_{x,y\in \overline\R^m,\rho(x,y)\le \delta}\kappa\Big( K(x)-2 K\big(\min(x,y)\big)+ K( y)\Big)}\longrightarrow 0~\text{as}~\delta\downarrow 0.
\end{split}
\end{equation*}
Moreover,
\begin{equation*}
\begin{split}
\forall t>0:&\frac 1 n\sum_{i=1}^n\E\Big(g^2(\xi_{i,n},n\alpha_{i,n})\I\big(g(\xi_{i,n},n\alpha_{i,n})>\sqrt nt\big)\Big)\\
&\le n\sum_{i=1}^n\alpha_{i,n}^2\I(\sqrt n\max_{1\le j\le n}\alpha_{j,n}>t)\longrightarrow 0~\text{as}~n\rightarrow\infty.
\end{split}
\end{equation*}
In all, it results from 4.2 in \cite{zie} that the process $\widetilde {\mathbb W}_n$ is asymptotically equicontinuous with respect to the metric space  $(\mathcal V,d)$, i.e.,
\begin{equation*}
\forall\varepsilon>0:\limsup_{n\rightarrow\infty}P^*\bigg(\sup_{f_1,f_2\in\mathcal V,d(f_1,f_2)\le \delta}|\widetilde W_n(f_1)-\widetilde W_n(f_2)|>\varepsilon\bigg) \longrightarrow 0~\text{as}~\delta\downarrow 0.
\end{equation*}
It follows that
\begin{equation*}
\begin{split}
\forall\varepsilon>0:&\limsup_{n\rightarrow\infty}P^*\bigg(\sup_{x,y\in \overline\R^m,\rho(x,y)\le \delta}|W_n(x)-W_n(y)|>\varepsilon\bigg)\\
&=\limsup_{n\rightarrow\infty}P^*\bigg(\sup_{f_1,f_2\in\mathcal V,d(f_1,f_2)\le \delta}|\widetilde W_n(f_1)-\widetilde W_n(f_2)|>\varepsilon\bigg) \longrightarrow 0~\text{as}~\delta\downarrow 0,
\end{split}
\end{equation*}
i.e., the statement.
\end{proof}

\begin{rmk}\label{bem1}
Assume a sequence of distribution functions $K_1,K_2,\dots$ defined on $\overline\R^m$ and a uniformly continuous distribution function $K$ defined on $\overline\R^m$ is given such that
\begin{equation*}
\lim_{i\rightarrow\infty}K_i= K~\text{uniformly on}~\overline\R^m.
\end{equation*}
Then, Remark \ref{lem-1} and (\ref{seq}) imply
\begin{equation*}
\lim_{n\rightarrow\infty}\sum_{i=1}^n\alpha_{i,n} K_i= K~\text{uniformly on}~ \overline\R^m,~\lim_{n\rightarrow\infty}n\sum_{i=1}^n\alpha_{i,n}^2 K_i=\kappa K~\text{uniformly on}~ \overline\R^m,
\end{equation*}
as well as 
\begin{equation*}
\lim_{n\rightarrow\infty} \sup_{x,y\in\overline\R^m}\bigg| n\sum_{i=1}^n\alpha_{i,n}^2 K_i(x) K_i(y)-\kappa K(x)K(y)\bigg|=0.
\end{equation*}
\end{rmk}
\begin{proof}[\bf Proof of Theorem \ref{thm1}] Consider $n$ sufficiently large. Because ${\mathbb F}_n= G_n$, 
\begin{equation*}
U_n(x)=\sqrt n \big( \hat{\mathbb F}_n(x)-{\mathbb F}_n(x)\big),~x\in\overline\R^m.
\end{equation*}
For that reason, the process $\mathbb U_n$ is asymptotically equicontinuous with respect to the metric space $(\overline\R^m,\rho)$, see Lemma \ref{lem0}. For arbitrary $p\in\N$, $x\in\times_{i=1}^{p}\overline\R^m$, $x=(x_1,\dots,x_p)$, and arbitrary $a\in\R^p\setminus\lbrace 0 \rbrace$, put
\begin{equation*}
s_n^2:=n\sum_{i=1}^n\alpha_{i,n}^2\Var
\left(a'
\begin{pmatrix}
\I(X_i\le x_1)-F_i(x_1)\\
\vdots\\
\I(X_i\le x_p)-F_i(x_p)
\end{pmatrix}
\right).
\end{equation*}
Remark \ref{bem1} and ${\mathbb F}_n= G_n$ imply $\lim_{i\rightarrow\infty}F_i= G$ uniformly on $\overline \R^m$. Regarding Remark \ref{bem1} again,
\begin{equation*}
\lim_{n\rightarrow\infty}s_n^2=\sum_{1\le j,k\le p}a_ja_k\kappa\Big(  G\big(\min(x_j,x_k)\big)-  G(x_j)  G(x_k)\Big).
\end{equation*}
Assume without loss of generality $\lim_{n\rightarrow\infty}s_n^2>0$. With $|a|_1:=|a_1|+\dots+|a_p|$, it is
\begin{equation*}
\begin{split}
\forall t>0:&\frac {1}{ s_n^2}\sum_{i=1}^n\E\vast(\left|a'
\begin{pmatrix}
\sqrt n\alpha_{i,n}\big(\I(X_i\le x_1)-F_i(x_1)\big)\\
\vdots\\
\sqrt n\alpha_{i,n}\big(\I(X_i\le x_p)-F_i(x_p)\big)
\end{pmatrix}
\right|^2\\
&\I\left(\left|a'
\begin{pmatrix}
\sqrt n\alpha_{i,n}\big(\I(X_i\le x_1)-F_i(x_1)\big)\\
\vdots\\
\sqrt n\alpha_{i,n}\big(\I(X_i\le x_p)-F_i(x_p)\big)
\end{pmatrix}
\right|>\sqrt{s_n^2} t\right)\vast)\\
&\le\frac {|a|_1^2}{s_n^2}n\sum_{i=1}^n\alpha_{i,n}^2\I\Big(|a|_1 { \sqrt n\max_{1\le j\le n}\alpha_{j,n}}>\sqrt{s_n^2}t\Big)\longrightarrow 0~\text{as}~n\rightarrow\infty.
\end{split}
\end{equation*}
For that reason, Lindeberg's condition is fulfilled and from Cram\'er-Wold device, the convergence of the finite dimensional marginal distributions of the process $\mathbb U_n$ to centered multivariate normal distributions follows.  Finally,
\begin{equation*}
\begin{split}
&\lim_{n\rightarrow\infty}\Cov\big(U_n(x),U_n(y)\big)=c(x,y),~x,y\in\overline\R^m.
\end{split}
\end{equation*}
The statement follows from Theorem 1.5.4 in \cite{van}.\end{proof}
\begin{proof}[\bf Proof of Corollary \ref{cor1}]
With Theorem \ref{thm1} and the Continuous Mapping Theorem, Theorem 1.3.6 in \cite{van}. Details are given by Baringhaus and Gaigall in the proof of Theorem 3 in \cite{bar}.
\end{proof}
\begin{proof}[\bf Proof of Theorem \ref{thm2}] The process $\mathbb U_n^{(n)}$ is a process based on the triangular array \linebreak  $X_1^{(n)},\dots,X_n^{(n)}$ of row-wise independent and identically distributed random vectors with values in $ \R^{m}$. The asymptotic of this process can be treated similar to the approach related to Theorem \ref{thm1} without any additional conditions by using $\lim_{n\rightarrow\infty} G_n= G$ uniformly on $\overline \R^m$. It follows the convergence of $\mathbb U_n^{(n)}$ to a Gaussian process with a.s. uniformly $\rho$-continuous sample paths and expectation function identically equal to zero. Finally, $\lim_{n\rightarrow\infty} G_n= G$ uniformly on $\overline \R^m$ yields
\begin{equation*}
\lim_{n\rightarrow\infty}\Cov\big(U_n^{(n)}(x),U_n^{(n)}(y)\big)=c(x,y),~x,y\in\overline\R^m.\qedhere
\end{equation*}
\end{proof}
\begin{proof}[\bf Proof of Corollary \ref{cor2}] 
With Theorem \ref{thm2} analogous to the Proof of Corollary \ref{cor1}.
\end{proof}

\begin{proof}[\bf Proof of Corollary \ref{cor3}]
Because $ G$ is uniformly continuous, the restriction of the covariance function (\ref{cov1}) to the diagonal of $\overline \R^m\times\overline \R^m$ does not vanish everywhere or $ G$-almost everywhere. For that reason, the distribution function of $\sup_{x\in\overline\R^m}|U(x)|$ or $\int U^2(x)   G(\mathrm d x)$ is strictly increasing on the non-negative half-line. For details, consider the arguments in Remark 1 and in the Proof of Corollary 1 in \cite{bar}. Consequently, $\lim_{n\rightarrow\infty} c_{n;1-\alpha}=c_{1-\alpha}$, where $c_{1-\alpha}$ is the $(1-\alpha)$-Quantile of $\sup_{x\in\overline\R^m}|U(x)|$,  or $\lim_{n\rightarrow\infty} d_{n;1-\alpha}=d_{1-\alpha}$,  where $d_{1-\alpha}$ is the $(1-\alpha)$-Quantile of $\int U^2(x)   G(\mathrm d x)$, see Corollary \ref{cor2}. Finally, the statement follows from Corollary \ref{cor1}.
\end{proof}

\begin{proof}[\bf Proof of Theorem  \ref{thm3}]Using triangle inequality,
\begin{equation*}
\frac{1}{\sqrt n}\KS_n\ge\sup_{x\in\overline\R^m}| F(x)-  G(x)|-\sup_{x\in\overline\R^m}|-\hat{\mathbb F}_n(x)+  G_n(x)+ F(x)-  G(x)|.
\end{equation*}
From Corollary \ref{cor1} and Slutsky's Theorem,
\begin{equation*}
\lim_{n\rightarrow\infty}\sup_{x\in\overline\R^m}|\hat{\mathbb F}_n(x)- {\mathbb F}_n(x)|=\lim_{n\rightarrow\infty}\frac {1}{\sqrt n}\sqrt n\sup_{x\in\overline\R^m}|\hat{\mathbb F}_n(x)- {\mathbb F}_n(x)|=0~\text{in~probability}.
\end{equation*}
For that reason,
\begin{equation*}
\begin{split}
&\sup_{x\in\overline\R^m}|-\hat{\mathbb F}_n(x)+  G_n(x)+ F(x)-  G(x)|\\
\le&\sup_{x\in\overline\R^m}|\hat{\mathbb F}_n(x)- {\mathbb F}_n(x)|+\sup_{x\in\overline\R^m}| {\mathbb F}_n(x)- F (x)|+\sup_{x\in\overline\R^m}|{ G}_n(x)- G (x)|\overset{P}{\longrightarrow}0~\text{as}~n\rightarrow\infty.
\end{split}
\end{equation*}
For arbitrary $\varepsilon>0$, Slutsky's Theorem implies
\begin{equation*}
\begin{split}
&P\bigg(\frac{1}{\sqrt n}\KS_n\ge\sup_{x\in\overline\R^m}|{ F}(x)- G(x)|-\varepsilon+o_p(1) \bigg)\\
\ge&P\bigg(\sup_{x\in\overline\R^m}|-\hat{\mathbb F}_n(x)+  G_n(x)+ F(x)-  G(x)|+o_p(1) \le\varepsilon \bigg)\longrightarrow 1~\text{as}~n\rightarrow\infty.
\end{split}
\end{equation*}
This yields the first statement. To show the second statement, note that
\begin{equation*}
\begin{split}
&\frac 1 n \CvM_n\\
=& \int\big({ F}(x)- G(x)\big)^2   G(\mathrm d x)+ \int\big({ F}(x)- G(x)\big)^2   G_n(\mathrm d x)- \int\big({ F}(x)- G(x)\big)^2   G(\mathrm d x)\\
&+\int\Big(\big({\mathbb F}_n(x)- G_n(x)\big)^2-\big({ F}(x)- G(x)\big)^2\Big)   G_n(\mathrm d x)\\
&-\int\Big(\big({\mathbb F}_n(x)- G_n(x)\big)^2-\big(\hat{\mathbb F}_n(x)- G_n(x)\big)^2\Big)   G_n(\mathrm d x).
\end{split}
\end{equation*}
It is
\begin{equation*}
\lim_{n\rightarrow\infty}\bigg|\int\big({ F}(x)- G(x)\big)^2   G_n(\mathrm d x)- \int\big({ F}(x)- G(x)\big)^2   G(\mathrm d x)\bigg|=0.
\end{equation*}
Moreover, with Cauchy-Schwarz inequality,
\begin{equation*}
\begin{split}
&\bigg|\int\Big(\big({\mathbb F}_n(x)- G_n(x)\big)^2-\big({ F}(x)- G(x)\big)^2\Big)   G_n(\mathrm d x)\bigg|\\
\le&2\sqrt{\int\Big(\big({\mathbb F}_n(x)- F(x)\big)-\big( G_n(x)- G (x)\big)\Big)^2 G_n(\mathrm d x)}\longrightarrow 0~\text{as}~n\rightarrow\infty.
\end{split}
\end{equation*}
Theorem \ref{thm1} implies the convergence in distribution of the process defined by $\sqrt n({\mathbb F}_n(x)-\hat{\mathbb F}_n(x))$, $x\in\overline\R^m$. $ G_n$ converges uniformly on $\overline \R^m$ to $ G$.  Argumentation analogous to the Proof of Corollary \ref{cor1} yields the convergence in distribution of $n\int({\mathbb F}_n(x)-\hat{\mathbb F}_n(x))^2  G_n(\mathrm d x)$. From Slutsky's Theorem,
\begin{equation*}
\lim_{n\rightarrow\infty}\int\Big({\mathbb F}_n(x)-\hat{\mathbb F}_n(x)\Big)^2  G_n(\mathrm dx)=\lim_{n\rightarrow\infty}\frac 1 n n\int\Big({\mathbb F}_n(x)-\hat{\mathbb F}_n(x)\Big)^2  G_n(\mathrm dx)=0~\text{in~probability}.
\end{equation*}
Therefore, with Cauchy-Schwarz inequality again,
\begin{equation*}
\begin{split}
&\bigg|\int\Big(\big({\mathbb F}_n(x)- G_n(x)\big)^2-\big(\hat{\mathbb F}_n(x)- G_n(x)\big)^2\Big)   G_n(\mathrm d x)\bigg|\\
\le&2\sqrt{\int\Big({\mathbb F}_n(x)-\hat{\mathbb F}_n(x)\Big)^2 G_n(\mathrm d x)}\overset{P}{\longrightarrow}0~\text{as}~n\rightarrow\infty.
\end{split}
\end{equation*}
The rest follows analogous to the proof of the first statement.
\end{proof}

\begin{proof}[\bf Proof of Corollary \ref{cor4}]
It is $\lim_{n\rightarrow\infty} c_{n;1-\alpha}=c_{1-\alpha}$, where $c_{1-\alpha}$ is the $(1-\alpha)$-Quantile of $\sup_{x\in\overline\R^m}|U(x)|$, or $\lim_{n\rightarrow\infty} d_{n;1-\alpha}=d_{1-\alpha}$, where $d_{1-\alpha}$ is the $(1-\alpha)$-Quantile of $\int U^2(x)   G(\mathrm d x)$, see the Proof of Corollary \ref{cor3}. Thus, Theorem \ref{thm3} implies
\begin{equation*}
\begin{split}
&\lim_{n\rightarrow\infty}P(\KS_n\ge c_{n;1-\alpha})\\
=&\lim_{n\rightarrow\infty}P\bigg(\frac {1}{\sqrt n}\KS_n\ge \sup_{x\in\overline\R^m}| F(x)-  G(x)|-\sup_{x\in\overline\R^m}| F(x)-  G(x)|+\frac {1}{\sqrt n} c_{n;1-\alpha}\bigg)=1,
\end{split}
\end{equation*}
or
\begin{equation*}
\begin{split}
&\lim_{n\rightarrow\infty}P(\CvM_n\ge d_{n;1-\alpha})\\
=&\lim_{n\rightarrow\infty}P\bigg(\frac {1}{ n}\CvM_n\ge \int\big({ F}(x)- G(x)\big)^2   G(\mathrm d x)-\int\big({ F}(x)- G(x)\big)^2   G(\mathrm d x)+\frac {1}{ n} d_{n;1-\alpha}\bigg)\\
=&1,
\end{split}
\end{equation*}
i.e., the statement.
\end{proof}

\begin{lem}\label{lem4}Assume (C1) and (C2). Then, for all sequences of parameters $(\vartheta_n)_{n\in\N}$ with $\vartheta_n\in\Theta$ for all $n\in\N$ and $\lim_{n\rightarrow\infty}\vartheta_n=\vartheta$,
\begin{equation*}
\lim_{n\rightarrow\infty}\sup_{x\in\overline\R^m}| G_n(x, \vartheta_n)- { G}(x,\vartheta)|=0.
\end{equation*}
\end{lem}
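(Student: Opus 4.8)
The plan is to insert the fixed-parameter limit $G(\cdot,\vartheta)$ and split, via the triangle inequality,
\[
\sup_{x\in\overline\R^m}|G_n(x,\vartheta_n)-G(x,\vartheta)|
\le \sup_{x\in\overline\R^m}|G_n(x,\vartheta_n)-G_n(x,\vartheta)|
+\sup_{x\in\overline\R^m}|G_n(x,\vartheta)-G(x,\vartheta)|.
\]
The second summand tends to zero by the standing assumption $\lim_{n\rightarrow\infty}G_n(\cdot,\vartheta)=G(\cdot,\vartheta)$ uniformly on $\overline\R^m$, which is available for the fixed parameter $\vartheta$. All the work therefore goes into the first summand, which compares $G_n$ at the two nearby parameters $\vartheta_n$ and $\vartheta$, uniformly in $x$.

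For the first summand I would invoke (C1) and (C2). Since $U_{\vartheta}$ is open and convex with $\vartheta\in U_{\vartheta}$ and $\vartheta_n\rightarrow\vartheta$, for $n$ large the whole segment $\lbrace\vartheta+t(\vartheta_n-\vartheta);t\in[0,1]\rbrace$ lies in $U_{\vartheta}$, where by (C1) the map $G_n(x,\cdot)$ is continuously differentiable. The fundamental theorem of calculus then gives
\[
G_n(x,\vartheta_n)-G_n(x,\vartheta)=\int_0^1 g_n\big(x,\vartheta+t(\vartheta_n-\vartheta)\big)'(\vartheta_n-\vartheta)\,\mathrm d t,
\]
whence
\[
\sup_{x\in\overline\R^m}|G_n(x,\vartheta_n)-G_n(x,\vartheta)|
\le|\vartheta_n-\vartheta|\sup_{x\in\overline\R^m,\,t\in[0,1]}\big|g_n\big(x,\vartheta+t(\vartheta_n-\vartheta)\big)\big|.
\]
It remains to bound the supremum of $|g_n|$ uniformly. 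By (C2), $g_n\rightarrow g$ uniformly on $\overline\R^m\times U_{\vartheta}$, so for $n$ large $\sup_{(x,\tilde\vartheta)}|g_n(x,\tilde\vartheta)-g(x,\tilde\vartheta)|\le 1$. Because $g$ is uniformly continuous and $g(\cdot,\vartheta)$ has bounded components, there are $\delta>0$ and $C\in(0,\infty)$ with $\sup_{x\in\overline\R^m}|g(x,\tilde\vartheta)|\le C$ whenever $|\tilde\vartheta-\vartheta|\le\delta$; for $n$ large the segment lies in this $\delta$-ball, so the supremum of $|g_n|$ along it is at most $C+1$. Consequently the first summand is at most $(C+1)|\vartheta_n-\vartheta|\rightarrow 0$, and both summands vanish.

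The main obstacle is that the only genuinely uniform-in-$x$ convergence at our disposal, $G_n(\cdot,\vartheta)\rightarrow G(\cdot,\vartheta)$, holds for a fixed parameter, whereas the statement is a joint limit in which the parameter moves with $n$. Bridging this gap is exactly the role of the gradient conditions: the differentiability in (C1) converts the parameter increment into an integral of $g_n$, while the uniform convergence together with the uniform continuity and boundedness in (C2) furnish the uniform-in-$x$ control of $g_n$ along the shrinking segment $[\vartheta,\vartheta_n]$. The point requiring a little care is that one needs the local boundedness of $g$ throughout a neighbourhood of $\vartheta$, not merely at $\vartheta$ itself, and this is precisely what the uniform continuity of $g$ on $\overline\R^m\times U_{\vartheta}$ supplies.
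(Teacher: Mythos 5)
Your proposal is correct and follows essentially the same route as the paper, whose proof is only the one-line remark that the statement follows from (C1), (C2), Taylor expansion and the triangle inequality; your argument simply fills in those details, with the fundamental-theorem-of-calculus representation playing the role of the Taylor expansion and the uniform continuity plus boundedness in (C2) supplying the uniform-in-$x$ bound on $g_n$ along the segment $[\vartheta,\vartheta_n]$.
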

\begin{proof}[\bf Proof of Lemma \ref{lem4}]
The statement follows with  (C1), (C2), Taylor expansion and triangle inequality.
\end{proof}

\begin{proof}[\bf Proof of Lemma \ref{lem2}]
Consider $n$ sufficiently large. The first statement will be shown firstly. For arbitrary  $a\in\R^d\setminus\lbrace 0 \rbrace$, (C4) implies
\begin{equation*}
\lim_{n\rightarrow\infty}{ n}\sum_{i=1}^n\alpha_{i,n}^2\Var \bigg(a'\Big(\ell_n(X_i,\vartheta)-\int \ell_n(x,\vartheta)F_i(\mathrm d x)\Big)\bigg)= a'\kappa v(\vartheta)a.
\end{equation*}
 For simplicity, assume in the following calculation without loss of generality $\int \ell_n(x,\vartheta)F_i(\mathrm d x)=0$ for all $i\in\N$. Put $s_n^2:= n\sum_{i=1}^n\alpha_{i,n}^2\Var (a'\ell_n(X_i,\vartheta))$ and assume without loss of generality $\lim_{n\rightarrow\infty}s_n^2>0$. (C4) yields
\begin{equation*}
\begin{split}
\forall t>0:&\lim_{n\rightarrow\infty}\frac {1}{ s_n^2} n\sum_{i=1}^n\alpha_{i,n}^2\E\bigg(|a'\ell_n(X_i,\vartheta)|^2\I\Big({\sqrt n}\alpha_{i,n}|a'\ell_n(X_i,\vartheta)|>\sqrt{s_n^2} t\Big)\bigg)\\
&\le\lim_{n\rightarrow\infty}\frac {1}{ {s_n^2}} n\sum_{i=1}^n\alpha_{i,n}^2\int a'\ell_n(x,\vartheta)\ell_n'(x,\vartheta)a\\
&\I\big( (\sqrt n\max_{1\le j\le n}\alpha_{j,n})^2a'\ell_n(x,\vartheta)\ell_n'(x,\vartheta)a>{s_n^2}t^2\big)  F_i(\mathrm d x)= 0.
\end{split}
\end{equation*}
For that reason, the triangular array of row-wise independent random variables \linebreak ${\sqrt n}\alpha_{1,n}a'\ell_n(X_1,\vartheta),\dots, {\sqrt n}\alpha_{n,n}a'\ell_n(X_n,\vartheta)$ fulfills Lindeberg's condition and from Cram\'er-Wold device, ${\sqrt n}\sum_{i=1}^n\alpha_{i,n} \ell_n(X_i,\vartheta)$ converges in distribution to a centered $d$-dimensional normal distribution as $n\rightarrow\infty$. For the second statement, (C3), the first statement and Slutsky's theorem imply
\begin{equation*}
\begin{split}
\hat \vartheta_n-\vartheta&=\sum_{i=1}^n\alpha_{i,n}  \ell_n(X_i,\vartheta)-\int \ell_n(x,\vartheta){\mathbb F}_n(\mathrm d x)+o_p(1)\\
&=\frac {1}{\sqrt n}{\sqrt n}\sum_{i=1}^n\alpha_{i,n}  \bigg(\ell_n(X_i,\vartheta)-\int \ell_n(x,\vartheta) F_i(\mathrm d x)\bigg)+o_p(1)\overset{P}{\longrightarrow} 0~\text{as}~n\rightarrow\infty,
\end{split}
\end{equation*}
i.e., the statement.
\end{proof}
\begin{lem}\label{lem3} 
Put
\begin{equation}\label{We}
 \begin{split}
&W_{i,n}(y,\vartheta):={\sqrt n}\alpha_{i,n} \Bigg(\I( X_i\le y)-  F_i(y)- g'(y, \vartheta) \bigg(\ell_n(X_i,\vartheta)-\int \ell_n(z,\vartheta) F_i(\mathrm d z)\bigg)\Bigg),\\
&y\in\overline\R^m,~i\in\N.
\end{split}
\end{equation}
Assume ${\mathbb F}_n= G_n(\cdot,\vartheta)$ for $n$ sufficiently large and (C1) - (C4). Then, for all $p\in\N$, $x\in\times_{i=1}^p\overline\R^m$, $x=(x_1,\dots,x_p)$, and all $a\in\R^p$, $a=(a_1,\dots,a_p)'$,
\begin{equation*}
\begin{split}
\left|a'\begin{pmatrix}
W_{i,n}(x_1,\vartheta)\\
\vdots\\
W_{i,n}(x_p,\vartheta)
\end{pmatrix}\right|^2\le & 2 n\alpha_{i,n}^2 |a|_1^2+ 2 n\alpha_{i,n} ^2 b'(x,a)\bigg(\ell_n(X_i,\vartheta)-\int \ell_n(z,\vartheta) F_i(\mathrm d z)\bigg)\\
&\bigg(\ell_n(X_i,\vartheta)-\int \ell_n(z,\vartheta) F_i(\mathrm d z)\bigg)'b(x,a),~i\in\N,
\end{split}
\end{equation*}
with $|a|_1:=|a_1|+\dots+|a_p|$ and
\begin{equation*}
b(x,a):=
\begin{pmatrix}
a_1 g_{1}(x_1, \vartheta)+\dots+a_p g_{1}(x_p, \vartheta)\\
\vdots\\
a_1 g_{d}(x_1, \vartheta)+\dots+a_p g_{d}(x_p, \vartheta)
\end{pmatrix}.
\end{equation*}
\end{lem}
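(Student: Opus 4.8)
The plan is to turn this pathwise estimate into a one-line application of the elementary inequality $(u-v)^2\le 2u^2+2v^2$. First I would expand the inner product, factoring the common scalar $\sqrt n\,\alpha_{i,n}$ out of every coordinate. Writing $L_i:=\ell_n(X_i,\vartheta)-\int \ell_n(z,\vartheta) F_i(\mathrm d z)$ for the centred score and collecting the two kinds of terms in \eqref{We}, one gets
\[
a'\begin{pmatrix} W_{i,n}(x_1,\vartheta)\\ \vdots \\ W_{i,n}(x_p,\vartheta)\end{pmatrix}
=\sqrt n\,\alpha_{i,n}\Big(S_i-\textstyle\sum_{k=1}^p a_k\, g'(x_k,\vartheta) L_i\Big),
\]
where $S_i:=\sum_{k=1}^p a_k\big(\I(X_i\le x_k)-F_i(x_k)\big)$. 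No probabilistic input is required: the bound holds for every realization of $X_i$, and (C1)--(C4) enter only through the existence of $g$ (by (C2)) and of $\ell_n$ (by (C3)), which make $W_{i,n}$ meaningful.

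The one step deserving care is the rearrangement of the score part into the quadratic form announced in the statement. Interchanging the order of summation,
\[
\sum_{k=1}^p a_k\, g'(x_k,\vartheta) L_i=\sum_{j=1}^d\Big(\sum_{k=1}^p a_k\, g_j(x_k,\vartheta)\Big)(L_i)_j= b(x,a)' L_i,
\]
which is exactly how the vector $b(x,a)$, with $j$-th entry $\sum_{k=1}^p a_k\, g_j(x_k,\vartheta)$, is designed to appear. Hence $a'(W_{i,n}(x_1,\vartheta),\dots,W_{i,n}(x_p,\vartheta))'=\sqrt n\,\alpha_{i,n}\big(S_i- b(x,a)' L_i\big)$.

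Finally I would square, pull out $n\alpha_{i,n}^2$, and apply $(u-v)^2\le 2u^2+2v^2$ with $u=S_i$ and $v=b(x,a)'L_i$, giving
\[
\left|a'\begin{pmatrix} W_{i,n}(x_1,\vartheta)\\ \vdots \\ W_{i,n}(x_p,\vartheta)\end{pmatrix}\right|^2\le 2n\alpha_{i,n}^2\,S_i^2+2n\alpha_{i,n}^2\big(b(x,a)'L_i\big)^2.
\]
For the first summand, each $\I(X_i\le x_k)-F_i(x_k)$ lies in $[-1,1]$, so the triangle inequality yields $|S_i|\le\sum_{k=1}^p|a_k|=|a|_1$ and thus $S_i^2\le|a|_1^2$. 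For the second, I would rewrite the squared scalar as the quadratic form $\big(b(x,a)'L_i\big)^2=b(x,a)'L_iL_i'\,b(x,a)$, and substituting $L_i=\ell_n(X_i,\vartheta)-\int \ell_n(z,\vartheta) F_i(\mathrm d z)$ reproduces the asserted right-hand side verbatim. There is no genuine obstacle here; this is a bookkeeping lemma whose only subtleties are the index interchange producing $b(x,a)$ and the observation that the centred indicators are individually bounded by one.
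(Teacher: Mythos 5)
Your proposal is correct and follows essentially the same route as the paper's proof: split $W_{i,n}$ into the centred-indicator part and the score part, bound the former by $|a|_1$ (each centred indicator lies in $[-1,1]$), identify the latter as $b'(x,a)\big(\ell_n(X_i,\vartheta)-\int\ell_n(z,\vartheta)F_i(\mathrm d z)\big)$ via the summation interchange, and finish with $(u\pm v)^2\le 2u^2+2v^2$. The only cosmetic difference is the order of operations (you square first and then bound $S_i^2\le|a|_1^2$, while the paper applies the triangle inequality before squaring), which changes nothing.
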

\begin{proof}[\bf Proof of Lemma \ref{lem3}]Define
\begin{equation*}
\begin{split}
\underline W_{i,n}(y,\vartheta)&:={\sqrt n}\alpha_{i,n} \big(\I( X_i\le y)-F_i(y)\big),\\
\overline W_{i,n}(y,\vartheta)&:=- {\sqrt n}\alpha_{i,n}  g'(y, \vartheta)\bigg(\ell_n(X_i,\vartheta)-\int \ell_n(z,\vartheta) F_i(\mathrm d z)\bigg),~y\in\overline\R^m,~i\in\N.
\end{split}
\end{equation*}
Thus, $W_{i,n}(y,\vartheta)=\underline W_{i,n}(y,\vartheta)+\overline W_{i,n}(y,\vartheta)$. Using triangle inequality,
\begin{equation*}
\begin{split}
\left|a'\begin{pmatrix}
W_{i,n}(x_1,\vartheta)\\
\vdots\\
W_{i,n}(x_p,\vartheta)
\end{pmatrix}\right|^2
&=
\left|
a'\begin{pmatrix}
\underline W_{i,n}(x_1,\vartheta)\\
\vdots\\
\underline W_{i,n}(x_p,\vartheta)
\end{pmatrix}
+a'\begin{pmatrix}
\overline W_{i,n}(x_1,\vartheta)\\
\vdots\\
\overline W_{i,n}(x_p,\vartheta)
\end{pmatrix}
\right|^2
\\
&\le
\left(
{\sqrt n}\alpha_{i,n} |a|_1
+\left|a'\begin{pmatrix}
\overline W_{i,n}(x_1,\vartheta)\\
\vdots\\
\overline W_{i,n}(x_p,\vartheta)
\end{pmatrix}
\right|\right)^2.
\end{split}
\end{equation*}
Furthermore, with a simple calculation, 
\begin{equation*}
a'\begin{pmatrix}
\overline W_{i,n}(x_1,\vartheta)\\
\vdots\\
\overline W_{i,n}(x_p,\vartheta)
\end{pmatrix}
=- {\sqrt n}\alpha_{i,n} b'(x,a) \bigg(\ell_n(X_i,\vartheta)-\int \ell_n(z,\vartheta) F_i(\mathrm d z)\bigg),~i\in\N.
\end{equation*}
Finally, the inequality $(y+z)^2\le 2 y^2+2z^2$, $y,z\in\R$, yields the statement.
\end{proof}

\begin{proof}[\bf Proof of Theorem \ref{thm4}]  Consider $n$ sufficiently large. From Lemma \ref{lem2}, $\lim_{n\rightarrow\infty}\hat \vartheta_n=\vartheta$ in probability. Therefore, assume without loss of generality that $\hat \vartheta_n$ has realizations in $U_{\vartheta}$ , with a set $ U_{\vartheta}$ given by (C1). Using Taylor expansion,
\begin{equation*}
\begin{split}
 G_n(x,\hat \vartheta_n)=& G_n(x, \vartheta)+ g_n'(x,\overline \vartheta_{x,n})(\hat \vartheta_n-\vartheta),~x\in\overline\R^m,
\end{split}
\end{equation*}
where  $\overline \vartheta_{x,n}$ is on the line between $\hat \vartheta_n$ and $ \vartheta$. Using this and  ${\mathbb F}_n=\mathbb G_n(\cdot, \vartheta)$,
\begin{equation*}
\begin{split}
U_n(x,\hat \vartheta_n)=&\sqrt n \big( \hat{ \mathbb F}_n(x)-{\mathbb F}_n(x)\big)- g'(x, \vartheta)\sqrt n(\hat \vartheta_n-\vartheta)\\
&+\big( g'(x, \vartheta)- g_n'(x, \overline \vartheta_{x,n})\big)\sqrt n(\hat \vartheta_n-\vartheta),~x\in\overline\R^m.
\end{split}
\end{equation*}
Because of (C3), this is equivalent to
\begin{equation*}
\begin{split}
U_n(x,\hat \vartheta_n)=&\sqrt n \big(\hat{ \mathbb F}_n(x)-{\mathbb F}_n(x)\big)- g'(x, \vartheta){\sqrt n}\sum_{i=1}^n \alpha_{i,n}  \bigg(\ell_n(X_i,\vartheta)-\int \ell_n(y,\vartheta) F_i(\mathrm d y)\bigg)\\
&+\big( g'(x, \vartheta)- g_n'(x, \overline \vartheta_{x,n})\big)\sqrt n(\hat \vartheta_n-\vartheta)- g'(x, \vartheta)o_p(1),~x\in\overline\R^m.
\end{split}
\end{equation*}
For simplicity, assume without loss of generality $\int \ell_n(y,\vartheta)F_i(\mathrm d y)=0$ for all $i\in\N$. Using (C2), Lemma \ref{lem2} and Slutsky's Theorem,
\begin{equation*}
\begin{split}
&\sup_{x\in\overline \R^m}\big|\big( g'(x, \vartheta)- g_n'(x, \overline \vartheta_{x,n})\big)\sqrt n(\hat \vartheta_n-\vartheta)- g'(x,\vartheta)o_p(1)\big|\overset{P}{\longrightarrow} 0~\text{as}~n\rightarrow\infty.
\end{split}
\end{equation*}
Because of Slutsky's theorem, Example 1.4.7 in \cite{van}, it is sufficient to show the convergence statement for the process defined by
\begin{equation*}
V_n(x, \vartheta):=\sqrt n \big( \hat{ \mathbb F}_n(x)- {\mathbb F}_n(x)\big)- g'(x, \vartheta){\sqrt n}\sum_{i=1}^n \alpha_{i,n} \ell_n(X_i,\vartheta),~x\in\overline\R^m.
\end{equation*}
The process defined by $\sqrt n (\hat{ \mathbb F}_n(x)-{\mathbb F}_n(x))$, $x\in\overline\R^m$, is asymptotically equicontinuous with respect to the metric space $(\overline\R^m,\rho)$, see Lemma \ref{lem0}. In addition, from (C2) and Lemma \ref{lem2}, the process defined by $ g'(x, \vartheta){\sqrt n}\sum_{i=1}^n\alpha_{i,n}  \ell_n(X_i,\vartheta)$, $x\in\overline\R^m$, is asymptotically equicontinuous with respect to the metric space $(\overline\R^m,\rho)$, too. Finally, the process $(V_n(x, \vartheta);x\in\overline\R^m)$ is asymptotically equicontinuous with respect to the metric space $(\overline\R^m,\rho)$. Consider $W_{i,n}(\cdot,\vartheta)$ defined in (\ref{We}), $i\in\N$. Thus,
\begin{equation*}
V_n(y, \vartheta)=\sum_{i=1}^nW_{i,n}(y,\vartheta),~y\in\overline\R^m.
\end{equation*}
For arbitrary $p\in\N$, $x\in\times_{i=1}^p\overline\R^m$, $x=(x_1,\dots,x_p)$, and arbitrary $a\in\R^p\setminus\lbrace 0 \rbrace$, put 
\begin{equation*}
s_n^2:=\sum_{i=1}^n\Var
\left(a'
\begin{pmatrix}
W_{i,n}(x_1,\vartheta)\\
\vdots\\
W_{i,n}(x_p,\vartheta)
\end{pmatrix}
\right).
\end{equation*}
With $a=(a_1,\dots,a_p)'$,  (C4) yields
\begin{equation*}
\begin{split}
\lim_{n\rightarrow\infty}s_n^2=&\sum_{1\le j,k\le p}a_ja_k\kappa\Big(  G\big(\min(x_j,x_k), \vartheta\big)- G(x_j,\vartheta)\mathbb G(x_k,\vartheta)\\
&- g'(x_j, \vartheta) w(x_k, \vartheta)- g'(x_k, \vartheta) w(x_j, \vartheta)+ g'(x_j, \vartheta) v(\vartheta) g(x_k, \vartheta)\Big).
\end{split}
\end{equation*}
Assume without loss of generality $\lim_{n\rightarrow\infty}s_n^2>0$. With $b(x,a)$ and $|a|_1$ defined in Lemma \ref{lem3}, Lemma \ref{lem3} and (C4) implies
\begin{equation*}
\begin{split}
\forall t>0:&\frac {1}{ s_n^2}\sum_{i=1}^n\E\left(\left|a'
\begin{pmatrix}
W_{i,n}(x_1,\vartheta)\\
\vdots\\
W_{i,n}(x_p,\vartheta)
\end{pmatrix}
\right|^2\I\left(\left|a'
\begin{pmatrix}
W_{i,n}(x_1,\vartheta)\\
\vdots\\
W_{i,n}(x_p,\vartheta)
\end{pmatrix}
\right|>\sqrt{s_n^2} t\right)\right)\\
&\le\frac {2}{s_n^2}|a|_1^2 n\sum_{i=1}^n\alpha_{i,n}^2\int \I\bigg(  (\sqrt n\max_{1\le j\le n}\alpha_{j,n})^2b'(x,a)\ell_n(y,\vartheta)\\
&\ell_n'(y,\vartheta)b(x,a)>{s_n^2}\frac{t^2}{2}- (\sqrt n\max_{1\le j\le n}\alpha_{j,n})^2|a|_1^2\bigg)  F_i( \mathrm d y)\\ 
&+\frac {2}{s_n^2}n \sum_{i=1}^n\alpha_{i,n}^2\int b'(x,a)\ell_n(y,\vartheta)\ell_n'(y,\vartheta)b(x,a)\\
&\I\bigg( (\sqrt n\max_{1\le j\le n}\alpha_{j,n})^2 b'(x,a)\ell_n(y,\vartheta)\\
&\ell_n'(y,\vartheta)b(x,a)>{s_n^2}\frac{t^2}{2}- (\sqrt n\max_{1\le j\le n}\alpha_{j,n})^2|a|_1^2\bigg)  F_i( \mathrm d y)\longrightarrow 0~\text{as}~n\rightarrow\infty.
\end{split}
\end{equation*}
For that reason, Lindeberg's condition is fulfilled and from Cram\'er-Wold device, the convergence of the finite dimensional marginal distributions of the process $(V_n(x, \vartheta);x\in\overline\R^m)$ to centered multivariate normal distributions follows. Finally, (C4) yields
\begin{equation*}
\lim_{n\rightarrow\infty}\Cov\big(V_n(x,\vartheta),V_n(y,\vartheta)\big)=c(x,y,\vartheta),~x,y\in\overline\R^m,
\end{equation*}
and the statement follows from Theorem 1.5.4 in \cite{van}.
\end{proof}

\begin{proof}[\bf Proof of Corollary \ref{cor5}]
With Theorem \ref{thm4}, Lemma \ref{lem2} and Lemma \ref{lem4} analogous to the Proof of Corollary \ref{cor1}.
\end{proof}

\begin{proof}[\bf Proof of Lemma \ref{lem5}]
This follows analogous to the Proof of Lemma \ref{lem2}.
\end{proof}
\begin{lem}\label{lemtech2} 
Put
\begin{equation}\label{We2}
 Z_{i,n}(y,\tilde\vartheta):={\sqrt n} \alpha_{i,n}\Bigg(\I( X_i^{(n)}\le y)-  \mathbb G_n(y,\tilde\vartheta)- g'(y, \vartheta)\ell_n(X_i^{(n)},\tilde\vartheta)\Bigg),~(y,\tilde\vartheta)\in\overline\R^m\times U_{\vartheta},~i\in\N.
\end{equation} Assume (C1), (C2), (C5) and (C6). Then, for all $p\in\N$, $x\in\times_{i=1}^p\overline\R^m$, $x=(x_1,\dots,x_p)$, and all $a\in\R^p$, $a=(a_1,\dots,a_p)'$,
\begin{equation*}
\left|a'\begin{pmatrix}
Z_{i,n}(x_1,\tilde\vartheta)\\
\vdots\\
Z_{i,n}(x_p,\tilde\vartheta)
\end{pmatrix}\right|^2\le 2n \alpha_{i,n}^2|a|_1^2+ 2 n \alpha_{i,n}^2 b'(x,a)\ell_n(X_i^{(n)},\tilde\vartheta)\ell_n'(X_i^{(n)},\tilde\vartheta)b(x,a),~\tilde\vartheta\in U_{\vartheta},~i\in\N,
\end{equation*}
with $b(x,a)$ and $|a|_1$ defined in Lemma \ref{lem3}.
\end{lem}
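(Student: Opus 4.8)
The plan is to transcribe the proof of Lemma \ref{lem3} almost verbatim, since $Z_{i,n}$ carries exactly the same additive structure as $W_{i,n}$: the distribution function $F_i$ is replaced by the ($i$-free but $\tilde\vartheta$-dependent) distribution function $\mathbb G_n(\cdot,\tilde\vartheta)$, and the centered score $\ell_n(X_i,\vartheta)-\int\ell_n(z,\vartheta)F_i(\mathrm d z)$ is replaced by the uncentered score $\ell_n(X_i^{(n)},\tilde\vartheta)$. The asserted inequality is purely pointwise and algebraic, so none of the limit content of (C1), (C2), (C5), (C6) is actually invoked at this step; these hypotheses are listed only because the lemma is subsequently applied inside the Lindeberg argument for Theorem \ref{thm5}.

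First I would decompose $Z_{i,n}(y,\tilde\vartheta)=\underline Z_{i,n}(y,\tilde\vartheta)+\overline Z_{i,n}(y,\tilde\vartheta)$, where
\[
\underline Z_{i,n}(y,\tilde\vartheta):=\sqrt n\,\alpha_{i,n}\big(\I(X_i^{(n)}\le y)-\mathbb G_n(y,\tilde\vartheta)\big),\qquad \overline Z_{i,n}(y,\tilde\vartheta):=-\sqrt n\,\alpha_{i,n}\,g'(y,\vartheta)\ell_n(X_i^{(n)},\tilde\vartheta).
\]
By the triangle inequality applied to the linear combination,
\[
\left|a'\begin{pmatrix}Z_{i,n}(x_1,\tilde\vartheta)\\\vdots\\Z_{i,n}(x_p,\tilde\vartheta)\end{pmatrix}\right|^2\le\left(\left|a'\begin{pmatrix}\underline Z_{i,n}(x_1,\tilde\vartheta)\\\vdots\\\underline Z_{i,n}(x_p,\tilde\vartheta)\end{pmatrix}\right|+\left|a'\begin{pmatrix}\overline Z_{i,n}(x_1,\tilde\vartheta)\\\vdots\\\overline Z_{i,n}(x_p,\tilde\vartheta)\end{pmatrix}\right|\right)^2.
\]
For the first summand I would use that $|\I(X_i^{(n)}\le x_j)-\mathbb G_n(x_j,\tilde\vartheta)|\le 1$, being a difference of two numbers in $[0,1]$, to bound it by $\sqrt n\,\alpha_{i,n}|a|_1$. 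For the second summand a direct computation, exactly as in the proof of Lemma \ref{lem3}, collapses $\sum_{j=1}^p a_j g'(x_j,\vartheta)$ into $b'(x,a)$ and yields
\[
a'\begin{pmatrix}\overline Z_{i,n}(x_1,\tilde\vartheta)\\\vdots\\\overline Z_{i,n}(x_p,\tilde\vartheta)\end{pmatrix}=-\sqrt n\,\alpha_{i,n}\,b'(x,a)\ell_n(X_i^{(n)},\tilde\vartheta),
\]
with $b(x,a)$ and $|a|_1$ as in Lemma \ref{lem3}. Finally I would invoke the elementary inequality $(y+z)^2\le 2y^2+2z^2$ together with $(b'(x,a)\ell_n(X_i^{(n)},\tilde\vartheta))^2=b'(x,a)\ell_n(X_i^{(n)},\tilde\vartheta)\ell_n'(X_i^{(n)},\tilde\vartheta)b(x,a)$, the square of a scalar, to collect the two terms into the claimed bound.

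There is no genuine obstacle here: the argument is an exact analogue of the proof of Lemma \ref{lem3}, with $\mathbb G_n(\cdot,\tilde\vartheta)$ playing the role previously played by $F_i$ and the uncentered score $\ell_n(X_i^{(n)},\tilde\vartheta)$ (whose $\mathbb G_n(\cdot,\tilde\vartheta)$-mean vanishes by (C5)) playing the role of the centered score. The only points requiring attention are notational, namely matching the definition of $b(x,a)$ and checking that the $[0,1]$-bound on $|\I-\mathbb G_n|$ holds uniformly in $\tilde\vartheta\in U_\vartheta$, which is immediate because $\mathbb G_n(\cdot,\tilde\vartheta)$ is a distribution function for each such $\tilde\vartheta$.
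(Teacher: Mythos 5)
Your proof is correct and is essentially the paper's own argument: the paper proves this lemma by noting it is analogous to the proof of Lemma \ref{lem3}, which is exactly the transcription you carry out (decomposition into indicator and score parts, triangle inequality, the bound $|\I-\mathbb G_n|\le 1$, the collapse of $\sum_j a_j g'(x_j,\vartheta)$ into $b'(x,a)$, and $(y+z)^2\le 2y^2+2z^2$). Your side remark that the inequality is purely algebraic and the conditions (C1), (C2), (C5), (C6) enter only in the later Lindeberg application is also accurate.
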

\begin{proof}[\bf Proof of Lemma \ref{lemtech2}] The proof is analogous to the Proof of Lemma \ref{lem3}.
\end{proof}

\begin{proof}[\bf Proof of Theorem \ref{thm5}] Consider $n$ sufficiently large. From Lemma \ref{lem5}, $\lim_{n\rightarrow\infty}\hat \vartheta_n^{(n)}=\vartheta$ in probability. Therefore, assume without loss of generality that $\hat \vartheta_n^{(n)}$ has realizations in $U_{\vartheta}$ and $ \vartheta_n\in U_{\vartheta}$, with a set $ U_{\vartheta}$ given by (C1).  Put
\begin{equation*}
\hat{\mathbb F}_n^{(n)}(x):=\sum_{i=1}^n\alpha_{i,n}\I(X_i^{(n)}\le x),~x\in\overline\R^m.
\end{equation*}
Using Taylor expansion,
\begin{equation*}
\begin{split}
 G_n(x,\hat \vartheta_n^{(n)})=& G_n(x, \vartheta_n)+ g_n'(x,\overline \vartheta_{x,n})(\hat \vartheta_n^{(n)}-\vartheta_n),~x\in\overline\R^m,
\end{split}
\end{equation*}
where $\overline \vartheta_{x,n}$ is on the line  between $\hat \vartheta_n^{(n)}$ and $ \vartheta_n$. Then,
\begin{equation*}
\begin{split}
&U_n(x,\hat \vartheta_n^{(n)})=\sqrt n \big(\hat{\mathbb F}^{(n)}_n(x)-  G_n(x, \vartheta_n)\big)- g'(x, \vartheta)\sqrt n(\hat \vartheta_n^{(n)}-\vartheta_n)\\
&+\big( g'(x, \vartheta)- g_n'(x, \overline \vartheta_{x,n})\big)\sqrt n(\hat \vartheta_n^{(n)}-\vartheta_n),~x\in\overline\R^m.
\end{split}
\end{equation*}
Using (C5), this is equivalent to
\begin{equation*}
\begin{split}
U_n(x,\hat \vartheta_n)=&\sqrt n \big( \hat{\mathbb F}^{(n)}_n(x)-{ G}_n(x, \vartheta_n)\big)- g'(x, \vartheta){\sqrt n}\sum_{i=1}^n\alpha_{i,n}  \ell_n(X_i^{(n)},\vartheta_n)\\
&+\big( g'(x, \vartheta)- g_n'(x, \overline \vartheta_{x,n})\big)\sqrt n(\hat \vartheta_n^{(n)}-\vartheta_n)- g'(x, \vartheta)o_p(1),~x\in\overline\R^m.
\end{split}
\end{equation*}
(C2), Lemma \ref{lem5} and Slutsky's Theorem yield
\begin{equation*}
\begin{split}
&\sup_{x\in\overline \R^m}\big|\big( g'(x, \vartheta)- g_n'(x, \overline \vartheta_{x,n})\big)\sqrt n(\hat \vartheta_n^{(n)}-\vartheta_n)- g'(x,\vartheta)o_p(1)\big|\overset{P}{\longrightarrow} 0~\text{as}~n\rightarrow\infty.
\end{split}
\end{equation*}
Because of Slutsky's theorem, it is sufficient to show the convergence statement for the process defined by
\begin{equation*}
V_n^{(n)}(x, \vartheta_n):=\sqrt n \big( \hat{\mathbb F}^{(n)}_n(x)- { G}_n(x,\vartheta_n)\big)- g'(x, \vartheta){\sqrt n}\sum_{i=1}^n \alpha_{i,n}\ell_n(X_i^{(n)},\vartheta_n),~x\in\overline\R^m.
\end{equation*}
Because $\lim_{n\rightarrow\infty} G(\cdot,\vartheta_n)= G(\cdot,\vartheta)$ uniformly on $\overline \R^m$, the process defined by $\sqrt n ( \hat{\mathbb F}^{(n)}_n(x)-{ G}_n(x,\vartheta_n))$, $x\in\overline\R^m$, is asymptotically equicontinuous with respect to the metric space $(\overline\R^m,\rho)$, see Lemma \ref{lem0}. In addition, from (C2) and Lemma \ref{lem5}, the process defined by $ g'(x, \vartheta){\sqrt n}\sum_{i=1}^n\alpha_{i,n} \ell_n(X_i^{(n)},\vartheta_n)$, $x\in\overline\R^m$, is asymptotically equicontinuous with respect to the metric space $(\overline\R^m,\rho)$, too. Finally, the process  $(V_n^{(n)}(x, \vartheta_n);x\in\overline\R^m)$ is asymptotically equicontinuous with respect to the metric space $(\overline\R^m,\rho)$. Consider $Z_{i,n}$ defined in (\ref{We2}), $i\in\N$. Thus,
\begin{equation*}
V_n^{(n)}(y, \vartheta_n)=\sum_{i=1}^nZ_{i,n}(y,\vartheta_n),~y\in\overline\R^m.
\end{equation*}
For arbitrary $p\in\N$, $x\in\times_{i=1}^p\overline\R^m$, $x=(x_1,\dots,x_p)$, and arbitrary $a\in\R^p\setminus\lbrace 0 \rbrace$, put 
\begin{equation*}
s_n^2:=\sum_{i=1}^n\Var
\left(a'
\begin{pmatrix}
Z_{i,n}(x_1,\vartheta_n)\\
\vdots\\
Z_{i,n}(x_p,\vartheta_n)
\end{pmatrix}
\right).
\end{equation*}
With $a=(a_1,\dots,a_p)'$,  (C6) yields
\begin{equation*}
\begin{split}
\lim_{n\rightarrow\infty}s_n^2=&\sum_{1\le j,k\le p}a_ja_k\kappa\Big(  G\big(\min(x_j,x_k), \vartheta\big)-  G(x_j,\vartheta)  G(x_k,\vartheta)\\
&- g'(x_j, \vartheta) w^{()}(x_k, \vartheta)- g'(x_k, \vartheta) w^{()}(x_j, \vartheta)+ g'(x_j, \vartheta) v^{()}(\vartheta) g(x_k, \vartheta)\Big).
\end{split}
\end{equation*}
Assume without loss of generality $\lim_{n\rightarrow\infty}s_n^2>0$. With $b(x,a)$ and $|a|_1$ defined in Lemma \ref{lem3}, Lemma \ref{lemtech2} and (C6) yields
\begin{equation*}
\begin{split}
\forall t>0:~
&\frac {1}{ s_n^2}\sum_{i=1}^n\E\left(\left|a'
\begin{pmatrix}
Z_{i,n}(x_1,\vartheta_n)\\
\vdots\\
Z_{i,n}(x_p,\vartheta_n)
\end{pmatrix}
\right|^2\I\left(\left|a'
\begin{pmatrix}
Z_{i,n}(x_1,\vartheta_n)\\
\vdots\\
Z_{i,n}(x_p,\vartheta_n)
\end{pmatrix}
\right|>\sqrt{s_n^2} t\right)\right)\\
&\le\frac {2}{ {s_n^2}}|a|_1^2{ n}\sum_{i=1}^n \alpha_{i,n}^2\int\I\Big( (\sqrt n\max_{1\le j\le n}\alpha_{j,n})^2b'(x,a)\ell_n(y,\vartheta_n)\\
&\ell_n'(y,\vartheta_n)b(x,a)>{s_n^2}\frac{t^2}{2}-(\sqrt n\max_{1\le j\le n}\alpha_{j,n})^2|a|_1^2\Big)   G_n( \mathrm d y,\vartheta_n)\\
&+\frac {2}{ {s_n^2}}{ n}\sum_{i=1}^n \alpha_{i,n}^2\int b'(x,a)\ell_n(y,\vartheta_n)\ell_n'(y,\vartheta_n)b(x,a)\\
&\I\Big( (\sqrt n\max_{1\le j\le n}\alpha_{j,n})^2b'(x,a)\ell_n(y,\vartheta_n)\\
&\ell_n'(y,\vartheta_n)b(x,a)>{s_n^2}\frac{t^2}{2}-(\sqrt n\max_{1\le j\le n}\alpha_{j,n})^2|a|_1^2\Big)   G_n( \mathrm d y,\vartheta_n)\longrightarrow 0~\text{as}~n\rightarrow\infty.
\end{split}
\end{equation*}
For that reason, Lindeberg's condition is fulfilled and from Cram\'er-Wold device, the convergence of the finite dimensional marginal distributions of the process $(V_n^{(n)}(x, \vartheta_n);x\in\overline\R^m)$ to centered multivariate normal distributions follows. Finally, Lemma \ref{lem4} and (C6) yields
\begin{equation*}
\begin{split}
\lim_{n\rightarrow\infty}\Cov\big(V_n^{(n)}(x, \vartheta_n),V_n^{(n)}(y, \vartheta_n)\big)=c^{()}(x,y,\vartheta),~x,y\in\overline\R^m,
\end{split}
\end{equation*}
and the statement follows from Theorem 1.5.4 in \cite{van}.
\end{proof}

\begin{proof}[\bf Proof of Corollary \ref{cor6}]
With Theorem \ref{thm5}, Lemma \ref{lem4} and Lemma \ref{lem5} analogous to the Proof of Corollary \ref{cor1}.
\end{proof}

\begin{proof}[\bf Proof of Corollary \ref{cor7}]
Because the restriction of the covariance function (\ref{cov3}) to the diagonal of $\overline \R^m\times\overline \R^m$ does not vanish everywhere or $ G(\cdot,\vartheta)$-almost everywhere, the distribution function of $\sup_{x\in\overline\R^m}|U(x,\vartheta)|$ or $\int U^2(x,\vartheta)   G(\mathrm d x,\vartheta)$ is strictly increasing on the non-negative half-line. In addition, the assumptions yield that the covariance functions (\ref{cov3}) and (\ref{cov4}) coincide. For that reason, $\lim_{n\rightarrow\infty} c_{n;1-\alpha}=c_{1-\alpha}$ in probability, where $c_{1-\alpha}$ is the $(1-\alpha)$-Quantile of $\sup_{x\in\overline\R^m}|U(x,\vartheta)|$, or $\lim_{n\rightarrow\infty} d_{n;1-\alpha}=d_{1-\alpha}$ in probability, where  $d_{1-\alpha}$ is the $(1-\alpha)$-Quantile of $\int U^2(x,\vartheta)   G(\mathrm d x,\vartheta)$, see Corollary \ref{cor6} and Lemma \ref{lem2}. The statement follows from Corollary \ref{cor5}.
\end{proof}

\begin{proof}[\bf Proof of Theorem \ref{thm6}] This follows with arguments similar to the arguments in the Proof of Theorem \ref{thm3} by using Lemma \ref{lem4} and (C7).
\end{proof}

\begin{proof}[\bf Proof of Corollary \ref{cor8}] Because the restriction of the covariance function (\ref{cov4}) to the diagonal of $\overline \R^m\times\overline \R^m$ does not vanish everywhere or $ G(\cdot,\vartheta)$-almost everywhere, the distribution function of $\sup_{x\in\overline\R^m}|U^{()}(x,\vartheta)|$ or $\int (U^{()}(x,\vartheta))^2   G(\mathrm d x,\vartheta)$ is strictly increasing on the non-negative half-line. For that reason, $\lim_{n\rightarrow\infty} c_{n;1-\alpha}=c_{1-\alpha}$ in probability, where $c_{1-\alpha}$ is the $(1-\alpha)$-Quantile of $\sup_{x\in\overline\R^m}|U^{()}(x,\vartheta)|$, or $\lim_{n\rightarrow\infty} d_{n;1-\alpha}=d_{1-\alpha}$ in probability, where $d_{1-\alpha}$ is the $(1-\alpha)$-Quantile of $\int (U^{()}(x,\vartheta))^2   G(\mathrm d x,\vartheta)$, see Corollary \ref{cor6} and regard (C7). With Theorem \ref{thm6},  the rest follows similar to the Proof of Corollary \ref{cor4}.
\end{proof}

\begin{lem}\label{lemallg}It is
\begin{equation*}
\lim_{\min N\rightarrow\infty}\sup_{(x,v)\in \overline \R^{m}\times  \overline \R^{s}}\big|\big|\big(\hat {\mathbb F}_n(x),\hat {\mathbb G}_r(v)\big)- \big(F(x),G(v)\big)\big|\big|=0~\text{in~probability}.
\end{equation*}
\end{lem}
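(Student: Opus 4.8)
The plan is to reduce the joint uniform statement to two separate one-sample Glivenko--Cantelli type results, one for $\hat{\mathbb F}_n$ and one for $\hat{\mathbb G}_r$, and to obtain each of these by splitting the deviation into a deterministic bias and a stochastic fluctuation. First I would note that, by equivalence of norms on $\R^2$, there is a constant $c\in(0,\infty)$ with
\begin{equation*}
\sup_{(x,v)\in\overline\R^m\times\overline\R^s}\big|\big|\big(\hat{\mathbb F}_n(x),\hat{\mathbb G}_r(v)\big)-\big(F(x),G(v)\big)\big|\big|\le c\sup_{x\in\overline\R^m}\big|\hat{\mathbb F}_n(x)-F(x)\big|+c\sup_{v\in\overline\R^s}\big|\hat{\mathbb G}_r(v)-G(v)\big|,
\end{equation*}
so it suffices to prove that each supremum on the right converges to zero in probability. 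Since $\min N\to\infty$ together with (\ref{samplesize}) forces both $n\to\infty$ and $r\to\infty$, the two convergences may be handled in their respective single indices.

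For the first sample I would decompose $\hat{\mathbb F}_n(x)-F(x)=(\hat{\mathbb F}_n(x)-\mathbb F_n(x))+(\mathbb F_n(x)-F(x))$. The deterministic term is immediate from Remark \ref{bem1}: because $\lim_{i\rightarrow\infty}F_i=F$ uniformly and $F$ is uniformly continuous, Remark \ref{bem1} together with (\ref{seq}) gives $\sup_{x\in\overline\R^m}|\mathbb F_n(x)-F(x)|\to 0$. For the stochastic term I would recognise $(\sqrt n(\hat{\mathbb F}_n(x)-\mathbb F_n(x));x\in\overline\R^m)$ as precisely the weighted empirical process of Lemma \ref{lem0}, which is asymptotically equicontinuous on the totally bounded space $(\overline\R^m,\rho)$, and whose finite dimensional distributions converge to centered Gaussians by the Lindeberg and Cram\'er-Wold argument of the proof of Theorem \ref{thm1}. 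By Theorem 1.5.4 in \cite{van} this process converges in distribution to a tight Gaussian limit, so the Continuous Mapping Theorem (Theorem 1.3.6 in \cite{van}) applied to the supremum norm gives $\sqrt n\sup_{x\in\overline\R^m}|\hat{\mathbb F}_n(x)-\mathbb F_n(x)|=O_p(1)$ and hence $\sup_{x\in\overline\R^m}|\hat{\mathbb F}_n(x)-\mathbb F_n(x)|=n^{-1/2}O_p(1)\to 0$ in probability. The second sample is handled identically, replacing $X_i,\alpha_{i,n},F_i,F,(\ref{seq})$ by $V_i,\beta_{i,r},G_i,G,(\ref{seq2})$ and using that Remark \ref{lem-1} applies to the weights $\beta_{1,r},\dots,\beta_{r,r}$ as well.

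The main obstacle I anticipate is justifying the $O_p(1)$ bound for the scaled fluctuation without invoking the hypothesis $\mathbb F_n=G_n$. I would emphasise that the asymptotic equicontinuity of Lemma \ref{lem0} is unconditional, and that the finite dimensional convergence in the proof of Theorem \ref{thm1} uses only $\lim_{i\rightarrow\infty}F_i=F$ uniformly and the weight conditions (\ref{seq}); neither ingredient requires the hypothesis. Consequently tightness of the limit, and with it the boundedness in probability of the supremum, is available in the present general situation, after which dividing by $\sqrt n$ (respectively $\sqrt r$) and adding the two deterministic biases completes the argument.
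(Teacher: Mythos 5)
Your proposal is correct and follows essentially the same route as the paper: the paper's proof applies Corollary \ref{cor1} with the target distribution taken to be $\mathbb F_n$ itself (so the hypothesis $\mathbb F_n = G_n$ holds trivially, which is exactly your observation that no hypothesis assumption is needed), obtaining convergence in distribution of $\sqrt n\sup_x|\hat{\mathbb F}_n(x)-\mathbb F_n(x)|$ and $\sqrt r\sup_v|\hat{\mathbb G}_r(v)-\mathbb G_r(v)|$, and then concludes by Slutsky's theorem. Your argument merely unrolls Corollary \ref{cor1} into its ingredients (Lemma \ref{lem0}, the Lindeberg/Cram\'er--Wold step of Theorem \ref{thm1}, Theorem 1.5.4 and the Continuous Mapping Theorem of \cite{van}) and makes explicit the bias term $\sup_x|\mathbb F_n(x)-F(x)|\rightarrow 0$ from Remark \ref{bem1}, which the paper leaves implicit.
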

\begin{proof}[\bf Proof of Lemma \ref{lemallg}]
Applying Corollary \ref{cor1} yields the convergence in distribution of \linebreak $\sqrt {n}\sup_{x\in\overline\R^{m}}|\hat {\mathbb F}_n(x)-{\mathbb F}_{n}(x)|$ and  $\sqrt {r}\sup_{v\in\overline\R^{s}}|\hat {\mathbb G}_r(v)-{\mathbb G}_{r}(v)|$. Then, from Slutsky's Theorem, $\sup_{x\in\overline\R^{m}}|\hat {\mathbb F}_{n}(x)- { F}(x)|\overset{P}{\longrightarrow} 0$ as $n\rightarrow\infty$ as well as $\sup_{v\in\overline\R^{s}}|\hat {\mathbb G}_{r}(v)- { G}(v)|\overset{P}{\longrightarrow} 0$ as $r\rightarrow\infty$.
\end{proof}

\begin{proof}[\bf Proof of Theorem \ref{thmallg1}]Consider $n$ sufficiently large. Denote by $\mathbb M_{N,m}:=(M_{N,m}(x);x\in\overline\R^{m})$ the process
\begin{equation*}
M_{N,m}(x):=\sqrt[4]{nr}\big(\hat{\mathbb F}_{n}(x)-{\mathbb F_n}(x)\big),~x\in\overline\R^{m},
\end{equation*} 
denote by $\mathbb M_{N,s}:=(M_{N,s}(v);v\in\overline\R^{s})$ the process
\begin{equation*}
M_{N,s}(v):=\sqrt[4]{nr}\big(\hat{\mathbb G}_{r}(v)-{\mathbb G_r}(v)\big),~v\in\overline\R^{s},
\end{equation*} 
put $\mathbb M_N:=(\mathbb M_{N,m},\mathbb M_{N,s})$ and $ M_N:=( M_{N,m}, M_{N,s})$. Thus, $ M_N=\sqrt[4]{nr}( (\hat{\mathbb F}_{n},\hat{\mathbb  G}_{r})-( {\mathbb F}_n, {\mathbb G}_r))$. Because the processes $\mathbb M_{N,m}$ and $\mathbb M_{N,s}$ are independent, Theorem \ref{thm1} and (\ref{samplesize}) yield the convergence in distribution
\begin{equation*}
\mathbb M_N\overset{\mathrm d}{\longrightarrow} \mathbb M~\text{as}~\min N\rightarrow\infty
\end{equation*}
on the related product space, see \cite{van}. $T(\mathbb F_n,\mathbb G_r)=Q(\mathbb F_n,\mathbb G_r)$  and the continuity of $T$ and $Q$ imply $T( F,G)=Q( F,G)$ and therefore $ J=j( F,G)=(F,G)$. Regarding Theorem 1.10.4 in \cite{van}, one can assume  a.s. $\lim_{\min N\rightarrow\infty}\mathbb M_N=\mathbb M$ uniformly on $ \overline \R^{m}\times \overline \R^{s}$ without loss of generality. Because  $T(\mathbb F_n,\mathbb G_r)=Q(\mathbb F_n,\mathbb G_r)$,
\begin{equation*}
\begin{split}
\mathbb U_N=&\sqrt[4]{nr} \bigg(T\Big((\mathbb F_n,\mathbb G_r)+\frac{1}{\sqrt[4]{nr}} M_N\Big)-T(\mathbb F_n,\mathbb G_r)\bigg)-\mathrm d T( F,G)( M)\\
&-\Bigg(\sqrt[4]{nr} \bigg(Q\Big((\mathbb F_n,\mathbb G_r)+\frac{1}{\sqrt[4]{nr}} M_N\Big)-Q(\mathbb F_n,\mathbb G_r)\bigg)-\mathrm d Q( F,G)( M)\Bigg)\\
&+\mathrm d T( F,G)( M)-\mathrm d Q( F,G)( M).
\end{split}
\end{equation*}
The assumtions on $T$ and $Q$ yield a.s. $\lim_{\min N\rightarrow\infty}\mathbb U_N=\mathbb U$ uniformly on $\overline\R^u$.
\end{proof}

\begin{proof}[\bf Proof of Corollary \ref{corallg1}]
The continuity of $T$ and $j$ and Lemma \ref{lemallg} imply
\begin{equation*}
\lim_{n\rightarrow\infty}\sup_{z\in \overline \R^{u}}|T(\hat{\mathbb J}_N)(z)- T( J)(z)|=0~\text{in~probability}.
\end{equation*}
Theorem \ref{thmallg1} yields $ J=(F,G)$. The rest follows with Theorem \ref{thmallg1} analogous to the Proof of Corollary \ref{cor1}.
\end{proof}

\begin{proof}[\bf Proof of Theorem \ref{thmallg2}] Put
\begin{equation}\label{triest1}
\hat{\mathbb F}_{n}^{(N)}(x):= \sum_{i=1}^{n}\alpha_{i,n}\I(X_i^{(N)}\le x),~x\in\overline\R^{m},
\end{equation}
as well as
\begin{equation}\label{triest2}
\hat{\mathbb G}_{r}^{(N)}(v):= \sum_{i=1}^{r}\beta_{i,r}\I(V_i^{(N)}\le v),~v\in\overline\R^{s},
\end{equation}
and  $\hat {\mathbb J}_{N}^{(N)}:=j(\hat{\mathbb F}_{n}^{(N)},\hat{\mathbb G}_{r}^{(N)})$. Denote by $\mathbb M^{(N)}_{N,m}:=(M^{(N)}_{N,m}(x);x\in\overline\R^{m})$ the process
\begin{equation*}
M^{(N)}_{N,m}(x):=\sqrt[4]{nr}\big(\hat{\mathbb F}_{n}^{(N)}(x)-J_{N,m}(x)\big),~x\in\overline\R^{m},
\end{equation*} 
by $\mathbb M^{(N)}_{N,s}:=(M^{(N)}_{N,s}(v);v\in\overline\R^{s})$ the process
\begin{equation*}
M^{(N)}_{N,s}(v):=\sqrt[4]{nr}\big(\hat{\mathbb G}_{r}^{(N)}(v)-J_{N,s}(v)\big),~v\in\overline\R^{s},
\end{equation*} 
put $\mathbb M_N^{(N)}:=(\mathbb M^{(N)}_{N,m},\mathbb M^{(N)}_{N,s})$ and $ M_N^{(N)}:=( M^{(N)}_{N,m}, M^{(N)}_{N,s})$.  Thus, $ M_N^{(N)}= \sqrt[4]{nr}( (\hat{\mathbb F}_{n}^{(N)},\hat{\mathbb G}_{r}^{(N)})-J_N)$. The processes $\mathbb M^{(N)}_{N,m}$ and $\mathbb M^{(N)}_{N,s}$  based on triangular arrays of row-wise independent and identically distributed random vectors. The asymptotic of this processes can be treated similar to the approach related to Theorem \ref{thm1} without any additional conditions by using $\lim_{\min N\rightarrow\infty} J_{N,m}= J_m$ uniformly on $\overline \R^{m}$ and $\lim_{\min N\rightarrow\infty} J_{N,s}= J_s$ uniformly on $\overline \R^{s}$, the uniformly continuity of $ J_m$ and $ J_s$  and  (\ref{samplesize}).  Because the processes $\mathbb M^{(N)}_{N,m}$ and $\mathbb M^{(N)}_{N,s}$ are independent, it follows that
\begin{equation*}
\mathbb M_N^{(N)}\overset{\mathrm d}{\longrightarrow} \mathbb M~\text{as}~\min N\rightarrow\infty
\end{equation*}
on the related product space. Again, one can assume  a.s. $\lim_{\min N\rightarrow\infty}\mathbb M_N^{(N)}=\mathbb M$ uniformly on $\overline \R^{m}\times \overline \R^{s}$ without loss of generality. Because  $T(J_N)=Q(J_N)$,
\begin{equation*}
\begin{split}
\mathbb U_N^{(N)}=&\sqrt[4]{nr} \bigg(T\Big(J_N+\frac{1}{\sqrt[4]{nr}} M_N^{(N)}\Big)-T(J_N)\bigg)-\mathrm d T( J)( M)\\
&-\Bigg(\sqrt[4]{nr} \bigg(Q\Big(J_N+\frac{1}{\sqrt[4]{nr}} M_N^{(N)}\Big)-Q(J_N)\bigg)-\mathrm d Q( J)( M)\Bigg)\\
&+\mathrm d T( J)( M)-\mathrm d Q( J)( M).
\end{split}
\end{equation*}
The assumtions on $T$, $Q$ and $J_N$ yield a.s. $\lim_{n\rightarrow\infty}\mathbb U_N^{(N)}=\mathbb U$ uniformly on $\overline\R^u$ and the statement follows.
\end{proof}

\begin{proof}[\bf Proof of Corollary \ref{corallg2}]
The asymptotic of $\hat {\mathbb F}_{n}^{(N)}$ and $\hat{\mathbb G}_{r}^{(N)}$ defined in (\ref{triest1}) and (\ref{triest2}) can be treated similar to the approach related to Corollary \ref{thm1} and Lemma \ref{lemallg} without any additional conditions by using $\lim_{\min N \rightarrow\infty} J_{N,m}= J_{m}$ uniformly on $\overline \R^{m}$, $\lim_{\min N \rightarrow\infty} J_{N,s}= J_{s}$ uniformly on $\overline \R^{s}$ and the uniformly continuity of $ J_m$ and $ J_s$. The continuity of $T$ and $j$ yield
\begin{equation*}
\lim_{\min N\rightarrow\infty}\sup_{z\in\overline \R^{u}}|T(\hat{\mathbb J}_N^{(N)})(z)- T( J)(z)|=0~\text{in~probability},
\end{equation*}
where $\hat {\mathbb J}_{N}^{(N)}$ is defined in the Proof of Theorem  \ref{thmallg2}. The rest follows with Theorem \ref{thmallg2} analogous to the Proof of Corollary \ref{cor1}.
\end{proof}

\begin{proof}[\bf Proof of Corollary \ref{corallg3}] The proof is analogous to the Proof of Corollary \ref{cor7} by applying Corollary \ref{corallg1}, Corollary \ref{corallg2}, Lemma \ref{lemallg} and the continuity of $j$. Regard that $ J=(F,G)$, see Theorem \ref{thmallg1}.
\end{proof}

\begin{proof}[\bf Proof of Theorem \ref{thmallg3}]This follows with arguments similar to the arguments in the Proof of  Theorem \ref{thm3} by using Lemma \ref{lemallg} and the continuity of $T$, $Q$ and $j$.
\end{proof}

\begin{proof}[\bf Proof of Corollary \ref{corallg4}] The proof is analogous to the Proof of Corollary \ref{cor8} by applying Corollary \ref{corallg2}, Lemma \ref{lemallg}, the continuity of $j$ and Theorem \ref{thmallg3}.
\end{proof}

\bibliographystyle{plain}

\end{document}